\title{Group bundles and group connections}
\author[D. Bl\'azquez-Sanz]{David Bl\'azquez-Sanz}
\author[S.S. Gordon]{Sedney Su\'arez Gordon}
\author[C. Mar\'in]{Carlos Alberto Mar\'in Arango}
\address{Instituto de Matem\'aticas \hfill\break\indent  Universidad de Antioquia \hfill\break\indent Medell\'in, Colombia}
\email{shahadgordon92@gmail.com}
\email{calberto.marin@udea.edu.co}
\address{Universidad Nacional de Colombia - Sede Medell\'in \hfill\break\indent  Facultad de Ciencias 
\hfill\break\indent Escuela de Matem\'aticas \hfill\break\indent  Medell\'in, Colombia}
\email{dblazquezs@unal.edu.co}
\subjclass[2010]{53C30; 53B05}
\keywords{Group bundles, infinitesimally generated $\mathcal{G}$-connections, group connection, Moduli spaces of flat group connections .}
\date{July 2021}
\begin{document}

\makeatletter
\newenvironment{dem}{Proof:}{\qed}

\makeatother

\numberwithin{equation}{section}
\theoremstyle{plain}\newtheorem{theorem}{Theorem}[section]
\theoremstyle{plain}\newtheorem{proposition}[theorem]{Proposition}
\theoremstyle{plain}\newtheorem{lemma}[theorem]{Lemma}
\theoremstyle{plain}\newtheorem{corollary}[theorem]{Corollary}

\theoremstyle{definition}\newtheorem{definition}[theorem]{Definition}
\theoremstyle{remark}\newtheorem{remark}[theorem]{Remark}

\theoremstyle{definition}\newtheorem{example}[theorem]{Example}

\newcommand{\fn}[5]{
            \begin{array}{cccl}
                #1: & #2 & \longrightarrow &  {#3}\\
                    & #4 & \longmapsto & {#1}(#4)={#5}
            \end{array}}
            
\newcommand{\na}[3]{\left( \mathcal{L}_{#1}\nabla \right)_{#2}{#3}=0} 

\newcommand{\dx}{\partial_x}
\newcommand{\dy}{\partial_y}
\newcommand{\dxi}{\partial_{x_i}}
\newcommand{\dxj}{\partial_{x_j}}
\newcommand{\dxk}{\partial_{x_k}}
\newcommand{\dxu}{\partial_{x_1}}
\newcommand{\dxd}{\partial_{x_2}}
\newcommand{\dt}{\partial_{\theta}}
\newcommand{\dr}{\partial_{r}}

\def\homo{\mathrm{Hom}}
\def\L{\mathcal{L}}
\def\r{\mathbb{R}}
\def\n{\mathbb{N}}
\def\q{\mathbb{Q}}
\def\z{\mathbb{Z}}
\def\so{\text{SO}}
\def\o{\text{O}}
\def\s{\mathbb{S}}
\def\c{\mathbb{C}}
\def\g{\mathfrak{g}} 
\def\cp{\mathbb{CP}} 
\def\fibP{\pi : P\to M}
\def\fibV{\pi : E\to M}
\def\d{\mathrm{d}}
\def\h{\mathcal{H}}
\def\gl{{\rm gl}(n)}
\def\X{\mathfrak{X}}
\def\lin{\rm Lin}
\def\gij{\Gamma_{ij}^l}
\def\gu{\Gamma_{11}}
\def\gd{\Gamma_{22}}
\def\gud{\Gamma_{12}}
\def\gdu{\Gamma_{21}}

\maketitle

\begin{abstract}
    We consider smooth families of Lie groups (group bundles) and connections that are compatible with the group operation. We characterize the space of group connections on a group bundle as an affine space modeled over the vector space of $1$-forms with values cocycles in the Lie algebra bundle of the aforementioned group bundle. We show that group connections satisfy the Ambrose-Singer theorem and that group bundles can be seen as a particular case of associated bundles realizing group connections as associated connections. We give a construction of the Moduli space of group connections with fixed base and fiber, as an space of representations of the fundamental group of the base.  
\end{abstract}


\section*{Introduction}

We study smooth families of Lie groups and the differential equations that are compatible with the group operation. As it has been pointed by B. Malgrange, who in \cite{malgrange2010differential} developed the theory of group connections in the complex algebraic case, this question is implicit in the work of E. Cartan on ``infinite dimensional Lie groups''. It also constitutes a geometric counterpart of the notion of differential algebraic groups introduced by E. Kolchin and developed by P. Cassidy and A. Buium among others \cite{cassidy1972differential, kolchin1985differential, buium2006differential}.

The purpose of this work is to outline the fundamentals of the theory of smooth group bundles and group connections, and how this theory is connected with the well known theory of transitive Lie groupoids and principal bundles. In particular, we show that group bundles can always be seen as associated bundles and group connections are always recovered as associated connections induced by principal connections in principal bundles.
In order to do so, we develop some general tools in the theory of connections. An implicit assumption that we use throughout the article is that, for any Lie group $G$ considered, the group of components $G/G^0$ is finitely generated. This gives to the group ${\rm Aut}_{gr}(G)$ of Lie group automorphisms \cite[Theorem 2]{hochschild1952automorphism} a canonical structure of Lie group.

\subsection*{Contents and results}

We set our notations and conventions about connections in Section \ref{s_EC}. Our treatment of connections emphasizes a triple approach: connections as distributions of vector fields, covariant derivative operators and sections of the jet bundle. Connections allowing arbitrary ``analytic continuation'' of horizontal piece-wise smooth paths are termed \emph{Ehresmann connections}. They have holonomy groups of diffeomorphisms. We check that they satisfy some weak form of the Ambrose-Singer theorem (Theorem \ref{th:w_AS}).

Section \ref{s_GB} is devoted to the general definitions around the notions of group bundle and group connection. We see that group bundles admitting group connections are locally trivial (Theorem \ref{th:LT}). Then we explore the space of group connection, finding that it is an affine space modeled over the vector space of $1$-forms in the base space with values in the space of $1$-cocycles of the group bundle in its own Lie algebra (Theorem \ref{th:GC_AS}).

In Section \ref{s_FB} we set up what we need about principal and associated bundles. In particular, we explore the relation between principal and associated connections, and their curvatures. Next, we start Section \ref{s_TSLG} by exploring the well known correspondence between principal bundles and transitive Lie groupoids. We see that such correspondence extends to relate associated bundles with representations of transitive Lie groupoids, and associated connections with connections compatible with such groupoid representation (Theorem \ref{th:GCC}). We have also realized that locally trivial group bundles can always be recovered as associated bundles to certain principal bundles (Theorem \ref{th:group_associated}). It immediately follows that group connections can always be recovered as associated connections to principal connections (Corollary \ref{cor:G_vs_AutG}). This also implies that group connections satisfy the Ambrose-Singer theorem (Corollary \ref{cor:AS_GC}.). Finally, in section \ref{s_ME} we explicitly construct the Moduli space of group connections on a fixed base space and fiber. This construction is completely analogous to that of the Moduli space of principal connections in principal bundles with fixed base space and structure group.

\section{Ehresmann connections}\label{s_EC}

Let $E$, $M$ be differentiable manifolds and let $\pi\colon E\to M$ be a smooth
submersion. Given $e\in  E$, the space $\mathrm{Ker}\big(\mathrm{d}_e \pi\big)$
is called the {\em vertical subspace\/} of $T_e E$ at the point $e$ with respect
to $\pi$; and it will be denoted by $\mathrm{Ver}_e(E)$.
A subspace $H$ of $T_e E$ is called
{\em horizontal\/} with respect to $\pi$ if it is a complement of $\mathrm{Ver}_e(E)$
in $T_e E$, that is, if:
\[T_e E=H\oplus\mathrm{Ver}_e(E).\]
Clearly a subspace $H$ of $T_e E$ is horizontal with respect to $\pi$ if and only
if the restriction of $\mathrm{d}_e\pi$ to $H$ is an isomorphism onto $T_{\pi(e)}M$.

A distribution $\mathcal H$ on the manifold $E$ is called {\em horizontal\/} with respect to $\pi$ if $\mathcal H_e$ is a horizontal subspace of $T_e E$ for
every $e \in E$. A smooth horizontal distribution on $E$ will be called an \emph{connection} on $E$ (with respect to $\pi$).

For all $x\in M$, $\pi^{-1}(x):=E_x$ is a smooth submanifold of $E$ and for every $e\in E_x$ we have:
\[\mathrm{Ver}_e(E)=T_e\big(E_x\big).\]
Therefore
\[\mathrm{Ver}(E):=\bigcup_{e\in E}\mathrm{Ver}_e(E)\subset TE\] is a smooth distribution on $E$, called the {\em vertical bundle\/} of $E$ determined by $\pi$.

Given a horizontal distribution $\mathcal{H}$ on $E$; then
\begin{equation}\label{eq:descHorplusVer}
T_e E = \mathcal{H}_e \oplus \mathrm{Ver}_p(E),
\end{equation}
for all $e\in E$. In this case we will denote by $\mathfrak{p}_{\mathrm{ver}}:T E\to\mathrm{Ver}(E)$, $\mathfrak{p}_{\mathrm{\mathcal{H}}}:TE\to\mathcal{H}$ the maps whose restrictions to $T_e E$ are equal to
the projection onto the first and the second coordinate respectively, corresponding to the direct sum decomposition \ref{eq:descHorplusVer}. 

\begin{definition}
Let $E, M$ be smooth manifolds and let $\pi:E\to M$ be a smooth submersion. Let $\mathcal{H}$ be an connection on $E$. If $\epsilon:U\to E$ is a smooth local section of $\pi$ then, given $x\in U$, $\vec v\in T_xM$, {\em{the covariant derivarive\/}} of $\epsilon$ at the point $x$ in the direction of $v$ with respecto to $\mathcal{H}$ is denoted and defined by
\begin{equation}\label{eq:covariantDeriv}
\nabla_{\vec v}\epsilon = \mathfrak{p}_{\mathrm{ver}}\left( \mathrm{d}_x \epsilon (\vec v) \right) \in \mathrm{Ver}_{\epsilon(x)}E;
\end{equation}
$\nabla$ is called {\em{the covariant derivative operator\/}} associated to the Ehreshmann connection $\mathcal{H}$. Given $x\in U$, if $\nabla_{\vec v} \epsilon =0$, for all $\vec v\in T_xM$, the local section $\epsilon$ is said to be {\em{horizontal at $x$\/}} with respect to $\mathcal{H}$; when this is the case for all $x\in U$, we say simply that $\epsilon$ {\em{is horizontal\/}} with respect to $\mathcal{H}$.
\end{definition}


\subsection{Connections as sections of the jet bundle}

We say that two local sections $\epsilon_1$, $\epsilon_2$ of $\pi\colon E\to M$ have contact of order $\geq 1$ at a point $x\in M$ in its common domain of definition if $\epsilon_1(x) = \epsilon_2(x)$ and $\mathrm{d}_{x}\epsilon_1 = \mathrm{d}_{x}\epsilon_2$. An equivalence class of contact of order $\geq 1$ of local sections of $\pi$ at a point of $M$ is called a $1$-jet of local section. We write $j^1_x\epsilon$ for the $1$-jet at $x$ of the local section $\epsilon$. The space of $1$-jets of sections of $\pi$ that we denote by $J^1E$ admits a canonical smooth structure such that 
there is a natural closed immersion,
$$J^1E \hookrightarrow \pi^*(T^*M)\otimes TE, \quad j^1_x\epsilon \mapsto \mathrm{d}_x\epsilon.$$
It is clear that for any two local sections $\epsilon_1$, $\epsilon_2$ with $\epsilon_1(x) = \epsilon_2(x) = p$ the difference $\mathrm{d}_x\epsilon_1 - \mathrm{d}_x\epsilon$ takes values in ${\rm Ver}_{p}(E)$. It follows that $\pi_{1,0}\colon J^1E \to E$ is an affine subbundle of $\pi^*(T^*M)\otimes TE$ modeled over the vector bundle $\pi^*(T^*M)\otimes {\rm Ver}(E)$. From now on we will make use of the fact that any element $\varphi\in J^1E$ can be seen as a linear map $\varphi\colon T_{\pi_{1,-1}(\varphi)}M \to T_{\pi_{0,1}(\varphi)}E$ where $\pi_{1,-1}$ stands for $\pi\circ\pi_{1,0}$. 

As this linear map $\varphi\in J^1E$ is a section of $\mathrm{d}_{\pi_{1,0}(\varphi)}\pi$ it follows that it is completely determined by its image $\varphi(T_{\pi_{1,-1}(\varphi)})\subset T_{\pi_{1,0}(\varphi)}E$ which is an horizontal subspace of $TE$. Therefore we have also a canonical inclusion,
$$J^1E \hookrightarrow {\rm Grass}(TE,m), \quad \varphi = j^1_x\epsilon \mapsto \varphi(T_{x}M) = \mathrm{d}_x\epsilon(T_xM),$$
where $m=\dim(M)$ and ${\rm Grass}(TE,m)$ stands for the grassmanian bundle of $m$ dimensional subspaces of $TE$. Such inclussion identifies $J^1E$ with the open dense subset of horizontal subspaces of $TE$.  

Therefore, there is a natural bijective correspondence between sections of $\pi_{1,0}$ and distributions of horizontal spaces (connections) for $\pi$. A section $\sigma$ of $\pi_{1,0}$ defines an Ehreshmann connection $\mathcal H$ by setting $\mathcal H_p  := \sigma(p)(T_{\pi(p)}M)$, and reciprocally an Ehreshmann connection $\mathcal H$ defines defines a section $\sigma$ of $\pi_{1,0}$ by setting $\sigma(p) = (\mathrm{d}_p\pi|_{\mathcal H_p})^{-1}$.

\subsection{Horizontal lift and curvature}

\begin{definition}
Let $\pi\colon E\to M$ be a smooth submersion endowed with a connection $\mathcal H$. Given a smooth vector field $X $ on $M$, the {\em horizontal lift\/} of $X$ is defined as the unique vector field $\tilde{X}$ on $E$ such that $\tilde{X}_e\in\mathcal{H}_e$ and $\mathrm{d}_e\pi \big(\tilde{X}_e\big)=X\big(\pi(e)\big),$
for all $e\in E$.
\end{definition}

The horizontal lift of vector fields is therefore $\mathcal C^\infty(M)$-linear operator,
$\mathfrak X(M) \to \mathfrak X(E)$. The horizontal space $\mathcal H$ at any point $p\in M$ is spanned by the values of horizontal lifts of vector fields in $M$. Therefore, the horizontal lift operator completely determines the horizontal distribution $\mathcal H$. This operator is also characterized by the following properties:
\begin{itemize}
    \item[(a)] It is $\mathcal C^\infty(M)$-linear.
    \item[(b)] For any $\pi$-projectable vector field $Y$ in $E$, $\widetilde{\pi_*(Y)} = Y$.
\end{itemize}
Up to now, we have seen four different ways of determining a connection in a submersion $\pi\colon E\to M$.
\begin{enumerate}
    \item A smooth distribution $\mathcal H$ of horizontal vector spaces in $E$;
    \item A covariant derivative operator
    $$\nabla \colon \Gamma(E,M)\to \bigcup_{s\in \Gamma(E,M)} \Omega^1(M, s^*({\rm Ver}(E)));$$
    \item A section $\sigma$ of the $1$-jet bundle $\pi_{1,0}\colon J^1E\to E$;
    \item An horizontal lift operator;
    $${\rm lift}^{\mathcal H}\colon \mathfrak X(M)\to \mathfrak X(E),\quad X\mapsto \tilde X.$$
\end{enumerate}
As each one of this objects determines the other three, we will use the one that best fits our purposes depending on the situation. \\

Let $\mathcal H$ be an connection on $\pi\colon E\to M$ and $\mathfrak X(M)\to \mathfrak X(E)$, $X\mapsto \Tilde X$ its corresponding horizontal lift operator. For a pair of vector fields $X,Y\in \mathfrak X(M)$, the formula
\begin{equation}\label{def:curvatureForm}
    R(X,Y) = \frac{1}{2}([\tilde X,\tilde Y] - \widetilde{[X,Y]}) 
\end{equation}
defines a \emph{vertical} vector field, that is, it projects by $\pi$ onto zero. The restriction of $R(X,Y)$ to a fiber $E_x$ depends only on the values $X_x,Y_x\in T_xM$. If follows that $R$ is a $2$-form in $M$ with values in the 
infinite dimensional\footnote{This infinite dimensional vector bundle $\mathfrak X_{{\rm Ver}(E)}\to M$ can be seen as the inductive limit of all vector bundles $L\to M$ where for all $x\in M$ the fiber $L_x$ is a finite dimensional vector space of $\mathfrak X(E_x)$.} vector bundle $\mathfrak X_{\rm Ver(E)}\to M$ whose fiber over $x\in M$ is $\mathfrak X(E_x)$.

\begin{definition}\label{df:curvature}
We call \emph{curvature form} of $\mathcal H$ to the $2$-form $R\in \Omega^2(M,\mathfrak X_{{\rm Ver}(E)})$ defined in \ref{def:curvatureForm}.
The connection $\mathcal H$ is said to be \emph{flat} if its curvature form vanishes. 
\end{definition}

Note that the curvature $R$ is the obstruction for the horizontal lift operator to be a Lie algebra morphism, and to the existence of horizontal local sections. If a connection is flat then for any $p\in E$ there exists a unique germ $s$ of horizontal section such that $s(\pi(p)) = p$ defined in a neighborhood of $\pi(p)$ . 

\subsection{Holonomy}

\begin{definition}
Let $\pi:E\to M$ be a smooth submersion endowed with a connection $\mathcal{H}$. Given a smooth
curve $\gamma:I\to M$, a
{\em horizontal lift\/} of $\gamma$ is a smooth
curve $\tilde\gamma:I\to E$ with $\pi \circ\tilde\gamma=\gamma$ and such that $\tilde\gamma'(t)\in\mathcal{H}_{\tilde\gamma(t)}$,
for all $t\in I$.  
\end{definition}

\begin{remark}\label{rm:vector_holonomy}
The lifting of smooth curves is compatible with the lifting of vector fields. That is, if $\gamma$ is an integral curve for the vector field $X\in\mathfrak X(M)$, then any horizontal lift of $\gamma$ is an integral curve of $\tilde X$, and reciprocally, any integral curve $\tilde\gamma$ of $\tilde X$ such that $\pi(\tilde\gamma(0)) = \gamma(0)$ is a horizontal lift of $\gamma$.
\end{remark}

\begin{definition}
We say that $\mathcal H$ is an \emph{connection} if it satisfies any of the following equivalent properties:
\begin{enumerate}
\item[(a)] For any complete vector field $X$ in $M$ its horizontal lift $\tilde X$ is a complete vector field in $E$.
\item[(b)] For any smooth (and therefore any piece-wise smooth) curve $\gamma$ in $M$ and any point $e\in E_{\gamma(0)}$ there exists a horizontal lift $\tilde\gamma$ of $\gamma$ with $\tilde\gamma(0) = p$.
\end{enumerate}
\end{definition}

The equivalence of properties (a) and (b) is clear. If $X$ is a complete vector field in $M$, for any $p\in E$ the exponential $\exp(t\tilde X)(p)$ is defined as the horizontal lift of $\exp(tX)(\pi(p))$ with initial condition $p$. Reciprocally, any smooth curve $\gamma$ is a concatenation of segments of integral curves of complete vector fields. By integrating the lifts of such vector fields we obtain the lift of $\gamma$.

\begin{remark}
We warn the reader that in several references the term Ehresmann conection is used as a synonym of connection. However, as C. Ehresmann required the existence of arbitrary horizontal lifts his definition of infinitesimal connection \cite[p. 157]{ehresmann1952} we prefer to apply the term Ehresmann connection only to connections with arbitrary horizontal lifts. Some other authors use the terms \emph{complete} (see \cite[page 81]{Kolar}) or \emph{geometric Painlevé property} (see, for instance, \cite{iwasaki2006}).     
\end{remark}

\begin{example}
Any connection in a proper smooth submersion is an Ehresmann connection.
Linear connections, principal connections and associated connections are Ehresmann connections, see \cite[Ch. II]{kobayashi}.
\end{example} 

Let $\pi:E\to M$ be a smooth submersion endowed with an Ehresmann connection $\mathcal{H}$. By a \emph{path} in $M$ we mean a continuous piece-wise smooth map from $I = [0,1]$ to $M$. Let us denote by $\ell(M)$ the space of paths in $M$. For each $x$ and $y$ in $M$ let us denote by $\ell(M,x,y)$ the space of paths $\gamma$ in $M$ such that $\gamma(0)=x$ and $\gamma(1) =y$. There is a concatenation law:
$$\ell(M,y,z) \times \ell(M,x,y)\to \ell(M,x,z), \quad (\gamma_1,\gamma_2)\mapsto \gamma_1\star\gamma_2,$$
where
$$(\gamma_1\star\gamma_2)(t) = \begin{cases}
\gamma_2(2t), & 0\leq t \leq \frac{1}{2};\\ \gamma_1(2t-1), & \frac{1}{2} < t \leq 1.
\end{cases}$$
A path $\gamma \in  \ell(M,x,y)$ defines a diffeomorphism $$\mathrm{hol}_{\gamma}^{\mathcal{H}}:E_x \to E_y,\quad p \mapsto \tilde{\gamma}(1),$$ 
where $\tilde{\gamma}$ denotes the horizontal lift of $\gamma$ with $\tilde\gamma(0) = e\in E_x$. These holonomy maps are compatible with the concatenation of paths,
$${\rm hol}_{\gamma_1\star\gamma_2}^{\mathcal{H}} = {\rm hol}_{\gamma_1}^{\mathcal{H}}\circ {\rm hol}_{\gamma_2}^{\mathcal{H}}.$$
In particular, if we set the two end points to coincide we have a representation:
$${\rm hol}^{\mathcal H}\colon \ell(M,x,x)\to {\rm Diff}(E_x).$$
There are three relevant subgroups of ${\rm Diff}(E_x)$ related with the holonomy representation.
\begin{enumerate}
    \item[(a)] The image of ${\rm hol}^{\mathcal H}$ is the so-called \emph{holonomy group} ${\rm Hol}_x^{\mathcal H}$ of $\mathcal H$ at the point $x$.
    \item[(b)] The image by ${\rm hol}^{\mathcal H}$ of the set of homotopically trivial loops is the so-called \emph{restricted holonomy group} ${\rm RHol}_x^{\mathcal H}$ of $\mathcal H$ at the point $x$.
    \item[(c)] Note that if $\mathcal U$ is a neighborhood of $x$ then we have an inclusion $\ell(\mathcal U,x,x)\subset \ell (M,x,x)$. The intersection, 
    $${\rm LHol}^{\mathcal H}_x = \bigcap_{\mathcal U\in \mathcal{N}} {\rm hol}^{\mathcal H}(\ell(\mathcal U,x,x))$$
    where $\mathcal{N}$ is any basis of simply connected neighborhoods of $x$, receives the name of \emph{local holonomy group}.
\end{enumerate}

Clearly, we have a chain of inclussions:
$${\rm LHol}^{\mathcal H}_x \subseteq {\rm RHol}^{\mathcal H}_x  \subseteq {\rm Hol}^{\mathcal H}_x.$$

\begin{theorem}[Weak form of Ambrose-Singer]\label{th:w_AS}
Let $\pi\colon E\to M$ be a smooth submersion endowed with an Ehresmann connection $\mathcal H$. Let $x$ be a point of $M$ and $X = R(\vec v_1,\vec v_2)\in \mathfrak X(E_x)$ a vector field in the image of the curvature tensor. Then $X$ is a complete vector field in $E_x$ and for any $t\in \mathbb R$ we have  $\exp(tX)\in {\rm LHol}^{\mathcal H}_x$.
\end{theorem}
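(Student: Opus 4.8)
The plan is to realize the flow of $X$ as a limit of holonomies around small ``coordinate rectangles'' and to recognize the curvature as the second--order term of these holonomies. First I would fix a chart of $M$ centered at $x$ and take the coordinate vector fields $X_1,X_2$ with $(X_1)_x=\vec v_1$ and $(X_2)_x=\vec v_2$. Since they commute near $x$ we have $\widetilde{[X_1,X_2]}=0$ there, so by \eqref{def:curvatureForm} the restriction $X=R(\vec v_1,\vec v_2)|_{E_x}$ equals $\tfrac12[\tilde X_1,\tilde X_2]|_{E_x}$. For small $s,t$ consider the loop $\gamma_{s,t}$ based at $x$ obtained by concatenating the integral--curve segments of $X_1$, $X_2$, $-X_1$, $-X_2$ of durations $s,t,s,t$ respectively; because $X_1,X_2$ commute near $x$ the base flows commute, so $\gamma_{s,t}$ is closed, and for $s,t$ small it lies inside any prescribed neighbourhood $\mathcal U$ of $x$.

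Next I would identify its holonomy. Reading the horizontal lift segment by segment and using Remark \ref{rm:vector_holonomy}, the lift of $\gamma_{s,t}$ is the corresponding composition of flows of $\tilde X_1,\tilde X_2$, so
\[
h_{s,t}:=\mathrm{hol}^{\mathcal H}_{\gamma_{s,t}}
=\bigl(\exp(-t\tilde X_2)\circ\exp(-s\tilde X_1)\circ\exp(t\tilde X_2)\circ\exp(s\tilde X_1)\bigr)\big|_{E_x}.
\]
Because $\mathcal H$ is an Ehresmann connection, each $h_{s,t}$ is a globally defined diffeomorphism of $E_x$, and by construction $h_{s,t}\in\mathrm{hol}^{\mathcal H}(\ell(\mathcal U,x,x))$. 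Since $h_{s,t}=\mathrm{id}$ whenever $s=0$ or $t=0$, the Taylor expansion of $(s,t)\mapsto h_{s,t}$ at the origin contains no pure powers of $s$ or of $t$; its leading term is the mixed second derivative, which is the flow commutator $[\tilde X_1,\tilde X_2]|_{E_x}=2X$. Hence, in local coordinates on $E_x$, $h_{s,t}=\mathrm{id}+2st\,X+o(st)$.

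Then I would run a Lie product (Euler) limit. Fixing $\tau\in\r$, choose $s_n,t_n\to0$ with $s_nt_n=\tau/(2n)$, so that $h_{s_n,t_n}=\mathrm{id}+(\tau/n)X+o(1/n)$ on compact subsets of $E_x$. The standard estimate for composing $n$ near--identity maps then gives $h_{s_n,t_n}^{\,n}\to\exp(\tau X)$ in $C^1_{\mathrm{loc}}$. Each $h_{s_n,t_n}^{\,n}$ is the holonomy of $\gamma_{s_n,t_n}$ traversed $n$ times, hence again a globally defined element of $\mathrm{hol}^{\mathcal H}(\ell(\mathcal U,x,x))$; and since $s_n,t_n$ can be kept inside any prescribed neighbourhood, this holds for every $\mathcal U\in\mathcal N$. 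Consequently the limit $\exp(\tau X)$ is defined on all of $E_x$ for every $\tau$, so $X$ is complete, and $\exp(\tau X)$ lies in $\mathrm{LHol}^{\mathcal H}_x$.

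The main obstacle is exactly this passage from infinitesimal to finite data: controlling the Lie product limit uniformly enough on the (possibly non--compact) fibre $E_x$ to guarantee that the global diffeomorphisms $h_{s_n,t_n}^{\,n}$ genuinely converge to a global flow, thereby yielding both completeness of $X$ and membership in the local holonomy group, while checking that the higher--order remainder of the rectangle holonomy does not survive in the limit. The algebraic input --- that the curvature is the second--order term of small--loop holonomy --- is routine; the analytic control of the limit and the completeness conclusion are where the real work lies.
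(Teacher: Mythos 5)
Your first half reproduces the paper's proof almost verbatim: you extend $\vec v_1,\vec v_2$ to commuting vector fields near $x$, note that $\widetilde{[Y_1,Y_2]}=0$ so $2X=[\tilde Y_1,\tilde Y_2]|_{E_x}$, build the rectangle loops from the base flows, and identify their holonomy with the group commutator of the lifted flows restricted to $E_x$, whose $\varepsilon$-derivative at $0$ is $[\tilde Y_1,\tilde Y_2]$ (the paper quotes Kol\'a\v{r}, Theorem 3.16, for this). Up to that point the two arguments coincide. You diverge at the final step, and that is where your argument has a genuine gap.

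The gap is the passage from the Euler/Lie-product limit to the two conclusions of the theorem. First, membership: $\mathrm{LHol}^{\mathcal H}_x$ is defined as $\bigcap_{\mathcal U\in\mathcal N}\mathrm{hol}^{\mathcal H}(\ell(\mathcal U,x,x))$, an intersection of \emph{set-theoretic images} of the holonomy representation inside $\mathrm{Diff}(E_x)$; nothing in the paper (or in general) makes these subgroups closed under $C^1_{\mathrm{loc}}$ limits. Producing $\exp(\tau X)$ as a limit of the holonomies $h_{s_n,t_n}^{\,n}\in\mathrm{hol}^{\mathcal H}(\ell(\mathcal U,x,x))$ therefore does not place it in $\mathrm{LHol}^{\mathcal H}_x$: you must exhibit an actual loop in $\mathcal U$ whose holonomy is \emph{exactly} $\exp(\tau X)$, and a limit of holonomies need not be a holonomy. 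Second, completeness: the convergence $h_{s_n,t_n}^{\,n}\to\exp(\tau X)$ on a possibly non-compact fiber is only valid where, and for as long as, the flow of $X$ already exists; the fact that each iterate is a globally defined diffeomorphism does not prevent integral curves of $X$ from escaping in finite time, so the asymptotic expansion $h_{s,t}=\mathrm{id}+2st\,X+o(st)$ cannot by itself yield completeness --- this is exactly the obstacle you name at the end, and your proposal asserts rather than overcomes it. The paper's proof avoids both problems with a single exact identity: for small $t>0$ it claims $\exp(tX)=\mathrm{hol}^{\mathcal H}(\gamma_{t/2})$, the holonomy of one rectangle loop, so that $\exp(tX)$ is literally in the image of $\ell(\mathcal U,x,x)$ for every $\mathcal U$, and is globally defined on $E_x$ for small $t>0$, whence completeness and $\exp(tX)\in\mathrm{LHol}^{\mathcal H}_x$ for all $t$ follow from the group property. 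To repair your argument you would need to prove such an exact (not merely first-order) realization of the flow by small-loop holonomies, rather than a limiting one.
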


\begin{proof}
Let us fix a neighborhood $\mathcal U$ of $x$. We may assume that $\vec v_1$ and $\vec v_2$ do not vanish, otherwise we obtain $X = 0$.
Let us consider $Y_1$ and $Y_2$ a pair of commuting vector fields in $M$ such that $Y_1(x) = \vec v_1$ and $Y_2(x) = \vec v_2$. Therefore $[\tilde Y_1,\tilde Y_2]$ is a vertical vector field and $2X = [\tilde Y_1,\tilde Y_2]|_{E_x}$.
Let $\varepsilon>0$ small enough (so as to allow application inside $\mathcal U$ of the flows of $Y_1$ and $Y_2$ to points near $x$ appearing in the following formula). By an immediate corollary of \cite[Theorem 3.16, p. 21]{Kolar} (cf. \cite[Exercise 6, p. 78]{Warner}) we have for any $p\in E_x$:
$$[\tilde Y_1, \tilde Y_2](p) = \left.\frac{\mathrm{d}}{\mathrm{d}\varepsilon}\right|_{\varepsilon = 0} ({\rm exp}(-\sqrt{\varepsilon} \tilde Y_2) \circ {\rm exp}(-\sqrt{\varepsilon}\tilde Y_1) \circ {\rm exp}(\sqrt{\varepsilon} \tilde Y_2) \circ {\rm exp}(\sqrt{\varepsilon} \tilde Y_1))(p)$$
Let $\gamma_{\varepsilon}$ be the piece-wise smooth loop based in $x\in M$ defined 
by: 
$$\gamma_\varepsilon(t) = \begin{cases} \exp\left(4t \sqrt{\varepsilon}  Y_1\right)x,\;  &t \in [0,\tfrac{1}{4}],\\
\exp \left(\sqrt{\varepsilon}(4t-1)Y_2\right)x_1, \; &t \in [\tfrac{1}{4},\tfrac{1}{2}],\\
\exp\left(-4\sqrt{\varepsilon}(t-\tfrac{1}{2}) Y_1\right)x_2, \;  &t \in [\tfrac{1}{2},\tfrac{3}{4}],\\
\exp\left(-4\sqrt{\varepsilon}(t-\tfrac{3}{4}) Y_2\right)x_3, \; &t \in [\tfrac{3}{4},1],\\
\end{cases}$$
where $x_1 = \exp(\sqrt{\varepsilon}Y_1)x$, $x_2 = \exp(\sqrt{\varepsilon}Y_2)x_1$ and $x_3 = \exp(-\sqrt{\varepsilon}Y_1)x_2$. 
By definition of horizontal lifting we have that the horizontal lifting of $\gamma_{\varepsilon }$ with initial condition $\tilde \gamma_{\varepsilon }(0) = p$ is:
$$\tilde \gamma_\varepsilon(t) = \begin{cases} \exp\left(4t \sqrt{\varepsilon}  \tilde Y_1\right)p,\;  &t \in [0,\tfrac{1}{4}],\\
\exp \left(\sqrt{\varepsilon}(4t-1)\tilde Y_2\right)p_1, \; &t \in [\tfrac{1}{4},\tfrac{1}{2}],\\
\exp\left(-4\sqrt{\varepsilon}(t-\tfrac{1}{2}) \tilde Y_1\right)p_2, \;  &t \in [\tfrac{1}{2},\tfrac{3}{4}],\\
\exp\left(-4\sqrt{\varepsilon}(t-\tfrac{3}{4}) \tilde Y_2\right)p_3, \; &t \in [\tfrac{3}{4},1],\\
\end{cases}
$$
where $p_1 = \exp(\sqrt{\varepsilon}Y_1)p$, $p_2 = \exp(\sqrt{\varepsilon}Y_2)p_1$ and $p_3 = \exp(-\sqrt{\varepsilon}Y_2)p_1$. From the compatibility of the horizontal lifting of vector fields and its integral curves we have:
$${\rm hol}^{\mathcal{H}}(\gamma_{\varepsilon})(p) = ({\rm exp}(-\sqrt{\varepsilon} \tilde Y_2) \circ {\rm exp}(-\sqrt{\varepsilon}\tilde Y_1) \circ {\rm exp}(\sqrt{\varepsilon} \tilde Y_2) \circ {\rm exp}(\sqrt{\varepsilon} \tilde Y_1))(p)$$
for all $p\in E_x$ and therefore
$${\rm hol}^{\mathcal{H}}(\gamma_{\varepsilon}) = {\rm exp}(-\sqrt{\varepsilon} \tilde Y_2) \circ {\rm exp}(-\sqrt{\varepsilon}\tilde Y_1) \circ {\rm exp}(\sqrt{\varepsilon} \tilde Y_2) \circ {\rm exp}(\sqrt{\varepsilon} \tilde Y_1)$$
Putting those formulas together we obtain that for $\varepsilon>0$:
$$[\tilde Y_1, \tilde Y_2](p) = \left.\frac{\mathrm{d}}{\mathrm{d}\varepsilon}\right|_{\varepsilon = 0} {\rm hol}^{\mathcal{H}}(\gamma_{\varepsilon})(p)$$
and thus for any $2\varepsilon>t>0$ we have,
$${\rm exp}(tX) = {\rm hol}^{\mathcal{H}}(\gamma_{t/2}).$$
this proves that $X$ is complete and its flow is in the image of $\ell(\mathcal U,x,x)$ by the holonomy representation. This holds for any neighborhood $\mathcal U$ of $x$ and therefore $\exp(tX)\in {\rm LHol}_x^{\mathcal H}$ (for small positive $t$ and thus for all $t\in\mathbb R$).
\end{proof}

\section{Group bundles}\label{s_GB}

For practical reasons it is convenient to introduce Lie groupoids in general and adress group bundles as a particular case. We consider the standard definition of Lie groupoid as in \cite[p. 112]{moerdijk2003introduction} (Note that this definition corresponds to the term differentiable groupoid for some authors cf. \cite{mackenzie1987lie}). This is, 
a Lie groupoid $\mathcal G\rightrightarrows M$ consists of the following data,
\begin{enumerate}
    \item A pair of smooth submersions $s,t\colon \mathcal G\to M$ called source and target respectively.
    \item Smooth maps 
    $${\mathfrak m}\colon \mathcal G\,{}_s\hspace{-1mm}\times_t \mathcal G\to \mathcal G : (\sigma,\tau)\mapsto \sigma \tau, \quad {\mathfrak i}\colon \mathcal G\to \mathcal G: \sigma \mapsto \sigma^{-1} \quad {\mathfrak e}\colon M\to \mathcal G: x\mapsto \mathfrak{e}_x$$ called multiplication, inversion and identity, satisfying the standard groupoid axioms.
\end{enumerate}

For $x, y\in M$ we set the following notation:
$$\mathcal G_{xy} = s^{-1}(x)\cap t^{-1}(y), \quad \mathcal G_{x\bullet} = s^{-1}(x), \quad
\mathcal G_{\bullet y} = t^{-1}(y).$$
For each $x\in M$ the fiber $\mathcal G_{xx}$ is a Lie group, called the stabilizer group of $x$. If $\mathcal G_{xy}\neq \emptyset$ then $\mathcal G_{xx}$ and $\mathcal G_{yy}$ are isomorphic Lie groups. The manifold $\mathcal G_{xy}$ is a left principal homogeneous $\mathcal G_{yy}$-space and a right principal homogeneous $\mathcal G_{xx}$-space. Left and right translations are partially defined. If $\sigma\in \mathcal G_{xy}$ then we have:
$$L_\sigma \colon \mathcal G_{\bullet x}\to \mathcal G_{\bullet y}, \quad R_\sigma \colon \mathcal G_{y\bullet}\to \mathcal G_{x\bullet}.$$

\begin{definition}
A Lie groupoid is called transitive if the map $(s,t)\colon \mathcal G\to M^2$ is surjective. 
\end{definition}

Let us define ${\rm Lie}(\mathcal G)$ as the pullback through the identity section $\mathfrak e$ of the vertical space to the target map; 
${\mathfrak e}^*(\ker({\rm d} t))$. It is as a vector bundle over $M$ 
and its fiber on a point $x\in M$ is
$T_{\mathfrak e_x}\mathcal G_{\bullet x}$. The differential of the source map is then a vector bundle morphism,
$$\rho\colon {\rm Lie}(\mathcal G) \to TM, \quad \vec v\mapsto {\rm d} s(\vec v)$$
that we call the \emph{anchor} map. Sections $A$ of ${\rm Lie}(\mathcal G)$ can be extended to left invariant vector fields in $\mathcal G$ by setting $A(g) = {\rm d} L_g(A(s(g))$. Lie bracket of left invariant vector fields are left invariant vector fields. This realizes $\Gamma({\rm Lie}(\mathcal G))$ as a Lie subalgebra of $\mathfrak X(\mathcal G)$ and defines a bracket operator in sections, for any open subset $U$ of $M$,
$$\Gamma({\rm Lie}(\mathcal G); U)\times \Gamma({\rm Lie}(\mathcal G); U) \to \Gamma({\rm Lie}(\mathcal G); U).$$
This bracket is related with the anchor map,
$$[A,fB] = (\rho(A)f)B + f[A,B], \quad f\in \mathcal C^\infty(U), \quad A,B\in \Gamma({\rm Lie}(\mathcal G); U).$$
This structure (vector bundle, bracket in sections, anchor) is known as a \emph{Lie algebroid} and therefore we call ${\rm Lie}(\mathcal G)$ the Lie algebroid of $\mathcal G$.

\subsection{Group bundles}

\begin{definition}
A Lie groupoid $\mathfrak{G}\rightrightarrows M$ is called a group bundle if $s=t$. 
\end{definition}

Given a group bundle, there is no difference between source and target, so that we may use the same symbol $\pi\colon \mathfrak{G}\to M$ for either of them and the notation $\mathfrak{G}_x = \pi^{-1}(x)$ for the stabilizer group of $x\in M$. Alternatively, we may define {\em{group bundle\/}} as a smooth bundle $\pi:\mathfrak{G} \to M$ endowed with a smooth map $\mathfrak{m}:\mathfrak{G}\times_M \mathfrak{G}\to \mathfrak{G}$ such that for each $x\in M$,
the map $\mathfrak{m}_x: \mathfrak{G}_x \times \mathfrak{G}_x \to \mathfrak{G}_x$ turns $\mathfrak{G}_x$ into a Lie Group. A group bundle can be thought as a {\em{smoothly varying\/}} family $\{\mathfrak{G}_x\}_{x\in M}$ of Lie groups parameterized by the points of $M$. 

If $\mathfrak{G}\to M$ is a group bundle, then the Lie algebroid ${\rm Lie}(\mathfrak{G})\to M$ has trivial anchor map. The value at a point $x\in M$ of a bracket depends only of the values at $x$ of the involved section. Therefore ${\rm Lie}(\mathfrak{G})\to M$ is a vector bundle by Lie algebras and ${\rm Lie}(\mathfrak G)_x = {\rm Lie}(\mathfrak G_x)$. \\

\begin{example}
If $\mathcal G\rightrightarrows M$ is a Lie groupoid then the equalizer of the source and target maps ${\rm eq}(s,t)\to M$ is a group bundle (and a Lie subgroupoid of $\mathcal G$). Its fiber over $x\in M$ is the stabilizer group $\mathcal G_{xx}$.
\end{example}

\begin{example}
Each vector bundle of rank $n$ is a group bundle. In fact note that in this case each fiber $E_x$ is isomorphic to the additive Lie group $(\mathbb{R}^n,+)$.
\end{example}

Let $\pi:\mathfrak{G} \to M$ be a group bundle. As usual 
we will denote by $\Gamma(\mathfrak{G};U)$ the set of all smooth local sections of $\pi$ defined on the open set $U$.
Given local sections $\epsilon_1, \epsilon_2 \in \Gamma(\mathfrak{G};U)$, their product is defined by 
\[
(\epsilon_1 \cdot \epsilon_2)(x) =\mathfrak{m}(\epsilon_1(x),\epsilon_2(x)).
\]Note that $\epsilon_1 \cdot \epsilon_2 \in \Gamma(\mathfrak{G};U)$. Therefore, this product turns the set $\Gamma(\mathfrak{G};U)$ into a group, with identity element $\mathfrak e|_U$. \\

\begin{definition}
Let $\pi:\mathfrak{G}\to M$, $\pi': \mathfrak{G}'\to M$ be group bundles.
A smooth map $\phi: \mathfrak{G}\to \mathfrak{G}'$ is said to be a
{\em group bundle morphism\/} if it is {\em fiber preserving,\/} and  for all $x\in M$ the map $\phi|_x: \mathfrak{G}_x\to \mathfrak{G}'_x,$ is a group homomorphism. 
\end{definition}

A group bundle isomorphism is, by definition, a group bundle morphism that is also a diffeomorphism. This ensures that the maps between the fibers are Lie group isomorphisms.


\subsection{Locally trivial group bundles}

A group bundle $\pi:\mathfrak{G}\to M$ is said to be {\em{locally trivial\/}}, if for each $x\in M$ there is a neighborhood $x\in U\subset M$ and a fiber preserving diffeomorphism $\phi:\mathfrak{G}|_U \to U \times \mathfrak{G}_x$ such that for each $y\in U$ the map $\phi_y : \mathfrak{G}_y \to \{y\} \times \mathfrak{G}_x$ is a Lie group isomorphism. \\

Let us recall that, given a Lie group $G$, its identity component $G^0$ is a clopen Lie subgroup. Let us see that the identity component of a group bundle is the union of the identity components of the fibers.

\begin{lemma}\label{lm:cc}
Let $\pi:\mathfrak{G} \to M$ be a locally trivial group bundle with connected base $M$ and $
\mathfrak{G}^0$ be the connected component of $\mathfrak{G}$ containing the image of the identity section $\mathfrak e$. Then, 
$$\mathfrak{G}^0 = \bigcup_{x\in X} (\mathfrak{G}_x)^0.$$
\end{lemma}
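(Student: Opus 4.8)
The plan is to put $U := \bigcup_{x\in M}(\mathfrak{G}_x)^0$ and to show that $U$ is at once connected and clopen in $\mathfrak{G}$. Since $\mathfrak{G}^0$ is by definition the connected component of $\mathfrak{G}$ containing $\mathfrak e(M)$, and since $\mathfrak{e}_x\in(\mathfrak{G}_x)^0$ gives $\mathfrak e(M)\subset U$, these two properties suffice: connectedness together with $\mathfrak e(M)\subset U$ yields $U\subseteq \mathfrak{G}^0$, while $U$ being clopen in $\mathfrak{G}$ makes it clopen in the connected set $\mathfrak{G}^0$, hence all of $\mathfrak{G}^0$.

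For connectedness I would argue as follows. Each fiber $\mathfrak{G}_x$ is a Lie group, so its identity component $(\mathfrak{G}_x)^0$ is connected and contains the identity $\mathfrak{e}_x$. The image $\mathfrak e(M)$ is connected, being the homeomorphic image of the connected manifold $M$. Each $(\mathfrak{G}_x)^0$ meets $\mathfrak e(M)$ at the point $\mathfrak{e}_x$, so each union $(\mathfrak{G}_x)^0\cup \mathfrak e(M)$ is connected; as all of these share the common connected subset $\mathfrak e(M)$, their total union $U=\bigcup_{x\in M}\bigl((\mathfrak{G}_x)^0\cup \mathfrak e(M)\bigr)$ is connected. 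This is exactly where the hypothesis that $M$ is connected enters.

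To see that $U$ is open and closed I would pass to local trivializations, which is where local triviality is used. Fix $x\in M$ and a trivializing diffeomorphism $\phi\colon \mathfrak{G}|_V\to V\times\mathfrak{G}_x$ whose fiber maps $\phi_y\colon\mathfrak{G}_y\to\{y\}\times\mathfrak{G}_x$ are Lie group isomorphisms. A Lie group isomorphism carries the identity component onto the identity component, so $\phi_y\bigl((\mathfrak{G}_y)^0\bigr)=\{y\}\times(\mathfrak{G}_x)^0$, and hence $\phi\bigl(U\cap\mathfrak{G}|_V\bigr)=V\times(\mathfrak{G}_x)^0$. Because $(\mathfrak{G}_x)^0$ is clopen in the Lie group $\mathfrak{G}_x$, the set $V\times(\mathfrak{G}_x)^0$ is clopen in $V\times\mathfrak{G}_x$, so $U\cap\mathfrak{G}|_V$ is clopen in $\mathfrak{G}|_V$. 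Since being open and being closed are local properties that can be tested on the members of the open cover $\{\mathfrak{G}|_V\}$ of $\mathfrak{G}$, it follows that $U$ is clopen in $\mathfrak{G}$.

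Putting the pieces together: $U$ is connected and contains $\mathfrak e(M)$, so $U\subseteq\mathfrak{G}^0$; and $U$ is clopen in $\mathfrak{G}$, hence clopen and nonempty in the connected space $\mathfrak{G}^0$, forcing $U=\mathfrak{G}^0$. The only step demanding genuine care is the local identity $\phi_y\bigl((\mathfrak{G}_y)^0\bigr)=\{y\}\times(\mathfrak{G}_x)^0$ --- that the trivialization respects identity components --- which rests on the identity component being a characteristic topological feature preserved by every Lie group isomorphism; everything else is the standard \emph{connected plus clopen} argument.
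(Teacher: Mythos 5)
Your proof is correct and takes essentially the same route as the paper's: both show that $\bigcup_{x}(\mathfrak{G}_x)^0$ is clopen in $\mathfrak{G}$ by passing to group bundle trivializations (whose fiber maps, being Lie group isomorphisms, carry identity components onto identity components) and establish the containment in $\mathfrak{G}^0$ via a connectedness argument anchored on the image of the identity section, concluding with the standard clopen-in-connected step. The only cosmetic difference is that you prove connectedness of the union globally in one stroke using $\mathfrak e(M)$ as a connected spine, whereas the paper verifies the containment $\mathfrak{G}'|_{\mathcal U_i}\subset \mathfrak{G}^0|_{\mathcal U_i}$ chart by chart on a trivializing cover.
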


\begin{proof}
Let us consider $\mathfrak{G}' =\bigcup_{x\in X} (\mathfrak{G}_x)^0$. We will see that $\mathfrak{G}'$ is a connected closed and open subset of $\mathfrak{G}$ containing $\mathfrak{G}^0$, then by the minimality of $\mathfrak{G}^0$ we conclude that $\mathfrak{G}' = \mathfrak{G}^0$.

In order to obtain the result, it is enough to see that for an open covering $\{\mathcal U_{i}\}_{i\in I}$ of $M$ we have that $\mathfrak{G}'|_{\mathcal U_i}$ is clopen in $\mathfrak{G}|_{\mathcal U_i}$ and $\mathfrak{G}'|_{\mathcal U_i} \subset \mathfrak{G}^0|_{\mathcal U_i}$.

We may take this open covering $\{\mathcal U_i\}_{i\in I}$ in such a way that each $\mathcal U_i$ is a trivializing connected neighborhood. We also consider smooth local trivializations $\psi_i\colon \mathfrak{G}|_{\mathcal U_i}\xrightarrow{\sim} \mathfrak{G}_i\times \mathcal U_i$. Then, there is a connected component $\mathfrak{G}_i^0$ of $\mathfrak{G}_i$ such that $G_i^0\times \mathcal U_i$ contains the image of $\psi_i\circ \mathfrak e|_{\mathcal U_i}$. It follows that $\mathfrak{G}'|_{\mathcal U} = \psi_i^{-1}(\mathfrak{G}_i^0\times \mathcal U)$ is a clopen subset of $\mathfrak{G}|_{\mathcal U_i}$ containing the image of $\mathfrak e|_{\mathcal U_i}$ and therefore $\mathfrak{G}'|_{\mathcal U}\subset \mathfrak{G}^0|_{\mathcal U}$. Thus, $\mathfrak{G}'$ is a clopen subset of $G^0$ and they are equal.
\end{proof}

\begin{remark}
A group bundle may be locally trivial as fiber bundle but not as a group bundle. A local trivialization in the general sense of fiber bundles (as in \cite{kobayashi,Kolar}) is not necessarily a local trivialization in the sense of group bundles. 
\end{remark}

\begin{example}\label{ejemplo:fibradonotrivial}
For each pair of real numbers $(\lambda,\mu) \in \r^2$ we have a semidirect product $\r^{+} \,\rtimes _{\lambda,\mu} \; \r^{2}$ given by the formula:
\[
(a,b,c)\cdot (a', b', c') := (aa', b+ a^{\lambda}b', c+a^{\mu}c').
\]
This family of Lie groups can be seen as a group bundle $\r^{+} \times \r^{4} \to \r^{2}$ 
with fiberwise multiplication, 
\[
\mathfrak{m}\left(\big(a,b,c,\lambda,\mu \big), \big(a',b', c', \lambda,\mu\big)\right) = (aa', b + a^{\lambda}b', c +a^{\mu}c', \lambda, \mu).
\]
This group bundle is not locally trivial because all its fibers are not isomorphic. 
\end{example}

\subsection{Group connections}

Let $\pi \colon \mathfrak{G}\to M$ be a group bundle. Its jet bundle $\pi_{1,-1}\colon J^1\mathfrak{G}\to M$ is naturally endowed with a group bundle structure where 
$$(j_x^1g)\cdot (j_x^1h) = j_x^1(g\cdot h).$$ The projection,
$\pi_{1,0}: J^1\mathfrak{G}\to \mathfrak{G}$ is a surjective group bundle morphism and its kernel, that we denote $K^1\mathfrak{G}$, consist of the jets of sections $j^1_xg$ such that $g(x)={\mathfrak e}_x$. We have an exact sequence of group bundles,
\begin{equation}\label{eq:se_jet}
0\to K^1\mathfrak{G} \to J^1\mathfrak{G} \xrightarrow{\pi_{1,0}} \mathfrak{G} \to 0.
\end{equation}

\begin{definition}\label{df:groupc}
A group connection in $\pi\colon \mathfrak{G}\to M$ is a connection $\mathcal H$ satisfying the following equivalent conditions:
\begin{enumerate}
    \item[(a)] The induced jet bundle section $\sigma\colon \mathfrak{G}\to J^1\mathfrak{G}$ is a group bundle morphism.
    \item[(b)] For all $(g,h)\in \mathfrak{G}\times_M \mathfrak{G}$, 
        $$\mathrm{d}_{(g,h)}\mathfrak m(\mathcal H_g \times_{T_xM} \mathcal H_h) = \mathcal H_{g\cdot h}, \quad 
        \mathrm{d}_g\mathfrak i(\mathcal H_g) = \mathcal H_{g^{-1}}.$$
\end{enumerate}
\end{definition}

In what follows, let $\pi\colon \mathfrak{G}\to M$ be a group bundle with connected base $M$ and $\mathcal H$ a group connection in $\mathfrak{G}$.

\begin{remark}
The following elementary facts concerning to horizontal lifts of smooth curves are direct consequences of condition (b) in Definition \ref{df:groupc}. Let $\gamma\colon I\to M$ be a smooth curve. 
\begin{enumerate}
    \item Let us assume that $\gamma$ is an integral curve of a vector field $X$ in $M$. Let $\tilde X$ be the $\mathcal H$-horizontal lift of $X$. Let $\hat \gamma\colon I\to \mathfrak{G}$ and $\tilde \gamma\colon I\to \mathfrak{G}$ be two integral curves of $\tilde X$ such that $\pi\circ \tilde\gamma = \pi\circ \hat\gamma = \gamma$. Then,
    $$\tilde \gamma\cdot \hat\gamma = \mathfrak m\circ (\tilde\gamma,\hat\gamma)\quad \mbox{and}\quad \tilde\gamma^{-1} = \mathfrak i\circ \tilde\gamma$$
    are integral curves of $\tilde X$ and $\mathcal H$-horizontal lifts of $\gamma$.
    \item The pullback $\gamma^*\mathfrak{G} = \mathfrak{G}\times_M I \to I$ is a group bundle endowed with a group connection $\gamma^*\mathcal H$.
    \item A lift $\tilde\gamma$ of a $\gamma$ is $\mathcal H$-horizontal if and only if the curve $t\in I\mapsto (\tilde\gamma(t), t)\in \gamma^*\mathfrak{G}$ is an integral curve of the $\gamma^*\mathcal H$-horizontal lift to $\gamma^*\mathfrak{G}$ of the vector field $\frac{\partial}{\partial t}$ in $I$.
    \item If $\tilde\gamma$ and $\hat\gamma$ are $\mathcal H$-horizontal lifts of $\gamma$, the curves $\tilde \gamma^{-1}\colon t\mapsto \tilde\gamma(t)^{-1}$ and $\tilde\gamma\cdot\hat\gamma \colon t\mapsto \tilde\gamma(t)\cdot \hat\gamma(t)$ are $\mathcal H$-horizontal lifts of $\gamma$.
\end{enumerate}
\end{remark}

\medskip 
\begin{lemma}[of analytic continuation]\label{lm:ancont}
Let $\gamma\colon \mathbb R\to M$ be smooth curve and $x=\gamma(0)$. There are $\varepsilon >0$ and a group bundle trivialization of $\gamma^*\mathfrak{G}|_{(-\varepsilon,\varepsilon)}\to (-\varepsilon,\varepsilon)$,
\begin{equation}\label{eq:bundletrivialization}
    \psi\colon \mathfrak{G}_{x}\times (-\varepsilon, \varepsilon) \xrightarrow{\sim} 
\gamma^*\mathfrak{G}|_{(-\varepsilon,\varepsilon)},
\end{equation}
such that $\psi_*(\mathcal H_0) = \gamma^*\mathcal H$ where $\mathcal H_0 = \{0\}\times T(-\varepsilon,\varepsilon)$ stands for the standard horizontal connection in the cartesian product $\mathfrak{G}_x\times (-\varepsilon,\varepsilon)$.
\end{lemma}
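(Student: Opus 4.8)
The plan is to build the trivialization $\psi$ from the $\mathcal H$-parallel transport along $\gamma$, deducing the uniform $\varepsilon$ from the group structure of the fibre together with our blanket assumption that the component group is finitely generated. Two structural facts are used throughout. First, by Definition \ref{df:groupc}(a) the jet section $\sigma$ is a group bundle morphism, so it sends $\mathfrak e_x$ to the identity $j^1_x\mathfrak e$ of $J^1\mathfrak{G}_x$; hence $\mathcal H_{\mathfrak e_x} = \mathrm d_x\mathfrak e(T_xM)$, the identity section is $\mathcal H$-horizontal, and the horizontal lift of $\gamma$ through $\mathfrak e_x$ is $t\mapsto \mathfrak e_{\gamma(t)}$, defined for every $t$. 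Second, by the Remark following Definition \ref{df:groupc} (items $1$ and $4$), pointwise products and inverses of $\mathcal H$-horizontal lifts of $\gamma$ are again $\mathcal H$-horizontal lifts of $\gamma$; equivalently each parallel transport $\mathrm{hol}^{\mathcal H}_{\gamma|_{[0,t]}}\colon \mathfrak{G}_x\to \mathfrak{G}_{\gamma(t)}$ is, where defined, a group homomorphism, and being a diffeomorphism onto its image, a group isomorphism.

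The core step, and the main obstacle, is the uniform existence of these lifts over a common interval. I work in the pullback group bundle $\gamma^*\mathfrak{G}\to \mathbb R$ and let $\xi$ be the $\gamma^*\mathcal H$-horizontal lift of $\partial_t$, so that (item $3$ of the same Remark) a lift of $\gamma$ is horizontal exactly when it is an integral curve of $\xi$. Local existence and smooth dependence for the flow of $\xi$ give a neighbourhood $V$ of $\mathfrak e_x$ in $\mathfrak{G}_x$ and some $\varepsilon_0>0$ such that the integral curve of $\xi$ through each $g\in V$ is defined on $(-\varepsilon_0,\varepsilon_0)$. By uniqueness of integral curves together with the closure properties above, the set of $g\in\mathfrak{G}_x$ whose lift is defined on $(-\varepsilon_0,\varepsilon_0)$ is a subgroup; it contains $V$ and hence the whole identity component $(\mathfrak{G}_x)^0$. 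Choosing lifts $c_1,\dots,c_r\in\mathfrak{G}_x$ of a finite generating set of the finitely generated component group $\mathfrak{G}_x/(\mathfrak{G}_x)^0$, the lift through each $c_i$ is defined on some $(-\varepsilon_i,\varepsilon_i)$; with $\varepsilon=\min(\varepsilon_0,\varepsilon_1,\dots,\varepsilon_r)$ and any $g\in\mathfrak{G}_x$ written as a product of an element of $(\mathfrak{G}_x)^0$ with a word in the $c_i^{\pm 1}$, multiplicativity of lifts shows the lift through $g$ is defined on $(-\varepsilon,\varepsilon)$. Thus the flow $\Phi^t$ of $\xi$ is defined on all of $\mathfrak{G}_x\times(-\varepsilon,\varepsilon)$; here the noncompactness of the fibre is precisely what is defeated by closing up under the group operations.

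Finally I would set $\psi(g,t)=\bigl(\Phi^t(g),t\bigr)$, that is, $\psi(g,t)=\mathrm{hol}^{\mathcal H}_{\gamma|_{[0,t]}}(g)$ viewed inside $\gamma^*\mathfrak{G}$, and check the four required properties. Smoothness of $\psi$ is smooth dependence of $\Phi$ on its initial condition; $\psi$ is a fibre-preserving diffeomorphism because each $\mathrm{hol}^{\mathcal H}_{\gamma|_{[0,t]}}$ is a diffeomorphism with smooth inverse; and each fibre map is a group isomorphism by the second structural fact, so $\psi$ is a group bundle trivialization. The identity $\psi_*(\mathcal H_0)=\gamma^*\mathcal H$ is then immediate, since the $\mathcal H_0$-horizontal curves are the constant-$g$ curves $t\mapsto(g,t)$, whose images $t\mapsto\Phi^t(g)$ are by construction the integral curves of $\xi$, i.e. the $\gamma^*\mathcal H$-horizontal lifts of $\gamma$; as a connection over the one-dimensional base $(-\varepsilon,\varepsilon)$ is determined by the horizontal lift of $\partial_t$, the two connections coincide.
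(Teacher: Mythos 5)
Your handling of the identity component is essentially the paper's: a flow box for the horizontal lift $\xi$ near $\mathfrak e_x$, plus the fact that products and inverses of integral curves of $\xi$ are again integral curves, plus generation of $(\mathfrak{G}_x)^0$ by any identity neighbourhood (the paper implements this by inductively enlarging domains via $\mathcal U_{n+1}=\mathfrak m(\mathcal U_n,\mathcal U_n)$ and extending $\psi_0$ multiplicatively, which is your subgroup argument in different clothing). For the remaining components you take a genuinely different route: you invoke the article's blanket assumption that $\pi_0(\mathfrak{G}_x)$ is finitely generated, lift finitely many generators, and take the minimum of finitely many existence times. The paper does \emph{not} use finite generation here; it keeps the $\varepsilon$ obtained for $\mathfrak{G}^0$ and runs a maximality/patching argument, extending an integral curve $g(t)$ past a putative maximal time $\delta$ by writing it as $h(t)\sigma(t)$, where $h(t)$ is an integral curve through an auxiliary point $h_y\in\mathfrak{G}_{\gamma(\delta)}$ lying in the same connected component of the total space as $g_x$, and $\sigma(t)$ is a curve in the already-trivialized $\mathfrak{G}^0$. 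Your version is arguably cleaner where it applies, and it makes visible exactly where the standing finite-generation hypothesis enters.

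There is, however, a genuine gap at your final step. You justify that $\psi$ is a diffeomorphism by asserting that ``each $\mathrm{hol}^{\mathcal H}_{\gamma|_{[0,t]}}$ is a diffeomorphism with smooth inverse,'' but in this paper holonomy diffeomorphisms are only available for Ehresmann connections, and the Proposition showing that a group connection \emph{is} Ehresmann is proved \emph{from} Lemma \ref{lm:ancont}; invoking it here is circular. What your construction actually delivers is that each time-$t$ flow map $\mathfrak{G}_x\to\mathfrak{G}_{\gamma(t)}$ is an injective group homomorphism and a diffeomorphism onto an open (hence closed) subgroup containing $(\mathfrak{G}_{\gamma(t)})^0$. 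Surjectivity of $\psi$ — that every point of every nearby fibre flows \emph{backward} to the fibre over $0$ — is never addressed, and finite generation of $\pi_0(\mathfrak{G}_x)$ gives you no control over components of the nearby fibres $\mathfrak{G}_{\gamma(t)}$: a priori these fibres could have components that your flow never reaches, in which case $\psi$ is only an open group-bundle embedding, not a trivialization. To close the gap you need the missing ingredient supplied (at least in mechanism) by the paper's patching step: an extension argument for integral curves of $\xi$ through \emph{arbitrary} points of $\gamma^*\mathfrak{G}$, using connectedness in the total space, not just through points of the fibre $\mathfrak{G}_x$.
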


\begin{proof}
Let $\vec X$ be the $\mathcal H$-horizontal lift of $\frac{\partial}{\partial t}$ to $\gamma^*\mathfrak G$. Note that condition (a) in Definition \ref{df:groupc} implies that if $g(t)$ and $h(t)$ are integral curves of $\vec X$ then $g(t)\cdot h(t)$ and $h(t)^{-1}$ are also integral curves of $\vec X$.

First, let us assume that the fibers of $\pi$ are connected. By means of the local reduction theorem there are neighborhoods $\mathcal U_0$ of $(\mathfrak e_x,0)$ in $\gamma^*\mathfrak{G}$ and $\mathcal U'_0$ of $\mathfrak e_x$ in $\mathfrak{G}_x$ and a difeomorphism,
$$\psi_0\colon   \mathcal U'_0 \times (-\varepsilon,\varepsilon) \xrightarrow{\sim} \mathcal U_0$$
such that $(\psi_0)_*\left(\frac{\partial}{\partial t}\right) = \vec X$.
Let us define inductively 
$$\mathcal U_{n+1} = {\mathfrak m}(\mathcal U_n, \mathcal U_n), \quad \mathcal U_{n+1}' = {\mathfrak m}(\mathcal U'_n, \mathcal U'_n),$$
we may extend also inductively the domain of $\psi_0$ by defining inductively,
$$\psi_{n+1}(g\cdot h,t) = \mathfrak{m}(\psi_n(g,t),\psi_n(h,t)).$$
since any connected Lie group is spanned by any open neighborhood of the identity element, in the limit $n\to \infty$ we obtain a group bundle trivialization as in \ref{eq:bundletrivialization}.

In the general case, by the above argument we have a trivialization of the connected component $\gamma^*\mathfrak G^0|_{(-\varepsilon,\varepsilon)}\to (-\varepsilon,\varepsilon)$. Let us consider $g_x\in \mathfrak{G}_x$ and let $g(t)$ be the integral curve of $\vec X$ with initial condition $g(0) = g_x$. Let us see that $g(t)$ is well defined for all $t\in (-\varepsilon,\varepsilon)$. Let $\delta$ be the supreme in $(-\varepsilon,\varepsilon)$ such that $g(t)$ is defined in $(-\delta,\delta)$. Reasoning by the contradiction, let us assume $\delta<\varepsilon$. Then, let us consider $y = \gamma(\delta)$ and $h_y$ any element of $\mathfrak{G}_y$ in the same connected component of $\mathfrak{G}$ that $g_x$. There is an integral curve $h(t)$ of $\vec X$ with initial condition $h(\delta) = h_y$ defined for $t$ varying in an open neighborhood of $\delta$. Let $\delta'\in \mathbb R$ in the common domain of definition of $g(t)$ and $h(t)$.
Then, $h(\delta')^{-1} g(\delta')$ is in the connected component $\mathfrak G^0$. Let us consider the integral curve $\sigma(t)$ of $\vec X$ with initial condition $\sigma(\delta') = h(\delta')^{-1} g(\delta')$. By the trivialization of the connected component we know that the domain of definition of $\sigma$ is at least $(-\varepsilon,\varepsilon)$. Then we obtain an extension of $g(t)$,
$$\tilde g(t) = \begin{cases}
g(t) & t\in (-\delta,\delta) \\ 
h(t)\sigma(t) & t> \delta'
\end{cases}$$
by the same argument we also obtain an extension of $g(t)$ on the left side of its domain, in contradiction with the maximality of $\delta<\varepsilon$. 
Thus, the domain of definition of $g(t)$ is $(-\varepsilon,\varepsilon)$. The flow ${\rm exp}(t\vec X)\cdot g_x$ is well defined for any $g_x\in \mathfrak{G}_x$ and $t\in (-\varepsilon,\varepsilon)$, We obtain the trivialization as in \eqref{eq:bundletrivialization} by setting
$\psi(g_x,t) = {\rm exp}(tX)\cdot g_x$.
\end{proof}

\begin{proposition}
Let $\mathcal H$ be a group connection in $\pi\colon \mathfrak{G}\to M$. Then
\begin{itemize}
    \item[(a)] $\mathcal H$ is an Ehresmann connection. 
    \item[(b)] For any path $\gamma$ joining two points $x$ and $y$ of $M$
$${\rm hol}_\gamma^{\mathcal H}\colon \mathfrak{G}_x \to \mathfrak{G}_y$$
is a Lie group isomorphism. 
\end{itemize}
\end{proposition}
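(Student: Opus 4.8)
The plan is to derive both statements from the analytic continuation Lemma \ref{lm:ancont}, which already carries the essential analytic content: for a smooth curve it produces a group bundle trivialization of $\gamma^*\mathfrak{G}$ over a full interval $(-\varepsilon,\varepsilon)$ in which the pulled-back connection becomes the standard flat one, the horizontal lift of a point $g_x\in\mathfrak{G}_{\gamma(0)}$ being exactly $t\mapsto \exp(t\vec X)\cdot g_x$. The decisive feature is that the lifting interval $(-\varepsilon,\varepsilon)$ does \emph{not} depend on the chosen initial point $g_x$ in the fiber; this uniformity is precisely what upgrades local existence into the completeness demanded of an Ehresmann connection.

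For part (a), let $\gamma\colon I\to M$ be smooth with $I=[0,1]$ and fix $g_x\in\mathfrak{G}_{\gamma(0)}$. I would consider the set $S$ of those $\tau\in[0,1]$ for which $\gamma|_{[0,\tau]}$ admits an $\mathcal H$-horizontal lift starting at $g_x$, and show $S=[0,1]$ by a connectedness argument. The set is nonempty, since $0\in S$, and by uniqueness of solutions of the first order condition $\tilde\gamma'(t)\in\mathcal H_{\tilde\gamma(t)}$ (Remark \ref{rm:vector_holonomy}) any such lift is unique. To conclude, set $\tau_0=\sup S$; applying Lemma \ref{lm:ancont} to $\gamma$ at the parameter value $\tau_0$ gives a trivialization over an interval $(\tau_0-\varepsilon,\tau_0+\varepsilon)$ in which lifts exist for \emph{every} initial condition in the fiber $\mathfrak{G}_{\gamma(\tau_0)}$. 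Choosing $\tau_1\in S$ with $\tau_0-\varepsilon<\tau_1\le\tau_0$ and using this trivialization to continue the already constructed lift from the value $\tilde\gamma(\tau_1)$, I extend the lift beyond $\tau_0$; hence $\tau_0\in S$, and if $\tau_0<1$ this contradicts $\tau_0=\sup S$. Therefore $1\in S$, the lift exists over all of $I$, and the piecewise smooth case follows by concatenating lifts over the smooth pieces. Thus $\mathcal H$ is an Ehresmann connection.

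For part (b), since $\mathcal H$ is now an Ehresmann connection, the discussion preceding Theorem \ref{th:w_AS} already yields that for $\gamma\in\ell(M,x,y)$ the map $\mathrm{hol}_\gamma^{\mathcal H}\colon\mathfrak{G}_x\to\mathfrak{G}_y$ is a diffeomorphism, its inverse being the holonomy of the reversed path $t\mapsto\gamma(1-t)$. It remains to check that it is a group homomorphism. Given $g,h\in\mathfrak{G}_x$, let $\tilde\gamma$ and $\hat\gamma$ be the horizontal lifts of $\gamma$ with $\tilde\gamma(0)=g$ and $\hat\gamma(0)=h$. By condition (b) of Definition \ref{df:groupc}, equivalently the fourth item of the Remark following it, the pointwise product $t\mapsto\tilde\gamma(t)\cdot\hat\gamma(t)$ is again a horizontal lift of $\gamma$, and its initial value is $g\cdot h$; by uniqueness of horizontal lifts it is therefore \emph{the} horizontal lift starting at $g\cdot h$. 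Evaluating at $t=1$ gives $\mathrm{hol}_\gamma^{\mathcal H}(g\cdot h)=\tilde\gamma(1)\cdot\hat\gamma(1)=\mathrm{hol}_\gamma^{\mathcal H}(g)\cdot\mathrm{hol}_\gamma^{\mathcal H}(h)$. A bijective group homomorphism that is a diffeomorphism is a Lie group isomorphism, proving (b).

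The main obstacle is entirely concentrated in part (a): unlike proper submersions or linear, principal and associated connections, a general connection on a group bundle need not possess complete horizontal lifts, so the Ehresmann property cannot be taken for granted. What makes the supremum argument succeed is the fiber-uniform lifting interval furnished by Lemma \ref{lm:ancont}; without this uniformity the value $\varepsilon$ could in principle shrink to zero along the fiber and the extension step would break down. Once (a) is secured, part (b) is a purely formal consequence of the group compatibility of horizontal lifts together with their uniqueness, and requires no further analysis.
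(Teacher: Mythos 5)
Your proposal is correct and follows essentially the same route as the paper: part (a) rests on the analytic continuation Lemma \ref{lm:ancont} with its fiber-uniform lifting interval, and part (b) on the multiplicativity of horizontal lifts (item (4) of the remark after Definition \ref{df:groupc}) together with uniqueness, exactly as in the paper's proof. The only difference is cosmetic: the paper globalizes the local trivializations by a finite subcover of $[0,1]$ and an explicit inductive gluing, while you use the equivalent supremum/continuation argument (note that, like the paper, you implicitly need $\gamma$ extended to an open neighborhood of $[0,1]$ so the lemma applies at the endpoint).
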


\begin{proof}
Let $\gamma$ be a smooth curve defined in some neighborhood of the closed interval $[0,1]$. $\gamma\colon (-\delta,1+\delta) \to M$. By application of Lemma \ref{lm:ancont} for each $s\in [0,1]$ there exist a $\varepsilon(s)$ and a group bundle trivialization:
$$  \psi_s\colon \mathfrak{G}_{\gamma(s)}\times (s-\varepsilon(s), s+\varepsilon(s)) \xrightarrow{\sim} 
\gamma^*\mathfrak{G}|_{(s-\varepsilon(s),s+\varepsilon(s))}.$$
As the interval $[0,1]$ is compact there is a finite number 
$s_0 = 0< s_1< \ldots <s_{n-1}<s_n = 1$ such that the family 
$\{I_j\}_{j=0}^n$ with $I_j = (s_j-\varepsilon(s_j),s_j+\varepsilon(s_j))$ covers the interval $[0,1]$.  If this family is minimal we also have that
$I_{j-1} \cap I_j \neq \emptyset$
for any $j=1,\ldots,n$ we take $t_j$ for $j=1,\ldots n$ 
in such intersection and define $t_0 = 0$ and $t_{n+1}=1$ so we 
have a sequence of numbers:
$$s_0 = t_0 = 0 < t_1 < s_1 < t_2 <\ldots < s_{n-1} < t_n < s_n = t_{n+1} = 1.$$
Let us consider the projections:
$$\phi_j = \pi_1\circ \psi_{s_j}^{-1} \colon \gamma^*\mathfrak{G}|_{(s_j-\varepsilon(s_j),s_j+\varepsilon(s_j))} \to \mathfrak{G}_{\gamma(s_j)}.$$
For an initial condition $g_x\in \mathfrak{G}_x$ we set $g_0 = g_x$, $g_1 = \psi_0(g_0,t_1)$, and define inductively for $k = 1,\ldots, n$:
$$g_{j+1} = \psi_{j}(\phi_j(g_j),t_{j+1}).$$
Then we have that the curve:
$$\tilde\gamma(t) = \begin{cases}
\psi_{0}(g_0,t) & t\in I_0 \\
\psi_{s_j}(\phi(g_j),t) & t\in I_j\,\, \mbox{with}\,\, j = 1,\ldots,n
\end{cases}$$
is an horizontal lift of $\gamma$ with initial condition $\tilde\gamma(0) = g_x$. If follows that $\mathcal H$ is an Ehresmann connection. 

Let us see (b). Any path decomposes as a concatenation of smooth curves so that it suffices to prove the statement for smooth curves. As before, let $\gamma$ be a smooth curve defined in $(-\delta,1+\delta)$. Let $g$ and $h$ be two elements of $\mathfrak{G}_{\gamma(0)}$. Let us consider $\tilde\gamma$ and $\hat\gamma$ the $\mathcal H$-horizontal lifts of $\gamma$ with initial conditions $\tilde\gamma(0)=g$ and $\hat\gamma(0) = h$. Then ${\rm hol}_\gamma^\mathcal H(g) = \tilde\gamma(1)$ and ${\rm hol}_\gamma^\mathcal H(h) = \hat\gamma(1)$. The curves $t\mapsto (\tilde\gamma(t),t)$ and $t\mapsto (\hat\gamma(t),t)$ are integral curves of the 
$\gamma^*\mathcal H$-horizontal lift $\vec X$ of $\frac{\partial}{\partial t}$. Therefore $t\mapsto (\tilde\gamma(t)\cdot \hat\gamma(t),t)$ is also an integral curve of $\vec X$ and $t\mapsto \tilde \gamma(t)\cdot \hat\gamma(t)$ is an $\mathcal H$-horizontal lift of $\gamma$. If follows,
$${\rm hol}^{\mathcal H}_\gamma(g\cdot h) = {\rm hol}^{\mathcal H}_\gamma(g)\cdot {\rm hol}^{\mathcal H}_\gamma(h).$$
Thus ${\rm hol}^{\mathcal H}_\gamma$ is a group morphism and, since it is also a diffeomorphism, it is a Lie group isomorphism.
\end{proof}

\begin{theorem}\label{th:LT}
If $\pi\colon \mathfrak{G}\to M$ admits a group connection then it is a locally trivial group bundle. 
\end{theorem}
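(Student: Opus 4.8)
The plan is to build the trivialization by \emph{radial parallel transport}. Fix $x\in M$ and choose a chart $\kappa\colon U\xrightarrow{\sim} B$ carrying a neighborhood $U$ of $x$ onto an open ball $B\subset\mathbb R^m$ centred at the origin, with $\kappa(x)=0$ and $m=\dim(M)$. For each $y\in U$ let $\gamma_y\colon[0,1]\to M$ be the radial path $\gamma_y(t)=\kappa^{-1}\big(t\,\kappa(y)\big)$, which is well defined because $B$ is star-shaped with respect to $0$. The preceding proposition shows that $\mathcal H$ is an Ehresmann connection whose holonomy maps are Lie group isomorphisms; in particular $\mathrm{hol}_{\gamma_y}^{\mathcal H}\colon\mathfrak G_x\to\mathfrak G_y$ is a Lie group isomorphism for every $y\in U$. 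I would then define
\[
\phi^{-1}\colon U\times\mathfrak G_x\to\mathfrak G|_U,\qquad \phi^{-1}(y,g)=\mathrm{hol}_{\gamma_y}^{\mathcal H}(g),
\]
which is fibre preserving and restricts on each fibre to the group isomorphism $\mathrm{hol}_{\gamma_y}^{\mathcal H}$. Hence the fibrewise group-isomorphism requirement is automatic, and the only thing left to check is that $\phi^{-1}$ is a diffeomorphism onto $\mathfrak G|_U$; granting this, its inverse $\phi$ is the desired group bundle trivialization and $\mathfrak G$ is locally trivial.

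To obtain smoothness I would realize $\phi^{-1}$ as the time-one map of a single smooth flow, which is the $m$-parameter analogue of Lemma \ref{lm:ancont}. Consider the smooth map $F\colon B\times[0,1]\to M$, $F(b,t)=\kappa^{-1}(tb)$, and the pullback group bundle $F^*\mathfrak G\to B\times[0,1]$ together with the pulled-back group connection $F^*\mathcal H$ (the pullback of a group connection along a smooth map is again a group connection, exactly as for $\gamma^*\mathcal H$ in the remark preceding Lemma \ref{lm:ancont}). Let $\vec X$ be the $F^*\mathcal H$-horizontal lift of $\partial_t$, so that at $\big((b,t),g\big)$ it projects to $\partial_t$ and to the $\mathcal H$-horizontal lift of $\mathrm dF(\partial_t)$; this vector field is smooth because $\mathcal H$ and $F$ are. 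Since $F(b,0)=x$ for all $b$, the slice of $F^*\mathfrak G$ over $B\times\{0\}$ is canonically $B\times\mathfrak G_x$, while the slice over $(b,1)$ is $\mathfrak G_{\kappa^{-1}(b)}$, and by construction the flow of $\vec X$ from $t=0$ to $t=1$ carries the fibre over $(b,0)$ to the fibre over $(b,1)$ by precisely $\mathrm{hol}_{\gamma_{\kappa^{-1}(b)}}^{\mathcal H}$.

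Because $\mathcal H$ is an Ehresmann connection, each radial horizontal lift is defined on the whole of $[0,1]$, so the flow of $\vec X$ is defined up to time one on all of $B\times\mathfrak G_x$. Smooth dependence of solutions of ODEs on initial conditions and parameters then makes this time-one map smooth, giving smoothness of $\phi^{-1}$; running the same argument for the backward flow shows that $\phi^{-1}$ is a diffeomorphism. I expect this smoothness-and-diffeomorphism step to be the main obstacle: the group-theoretic content is handed to us for free by the previous proposition, and the real work is assembling the individual path holonomies into a single smooth family and verifying, via the flow of $\vec X$ on $F^*\mathfrak G$, that the resulting map is a diffeomorphism—taking care that no fibrewise degeneration occurs at $t=0$, where the total space points $\big((b,0),g\big)$ remain distinct even though their fibres are all copies of $\mathfrak G_x$.
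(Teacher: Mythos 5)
Your proof is correct, and at bottom it pursues the same strategy as the paper --- trivialize $\mathfrak G$ near a point by parallel transport into a single fibre, with the fibrewise group-isomorphism property supplied for free by the preceding proposition --- but the implementation is genuinely different. The paper picks coordinates $\lambda_1,\dots,\lambda_n$ identifying a neighborhood with a cube and transports along ``staircase'' paths: its trivialization is the closed formula $g\mapsto\bigl({\exp}(-\lambda_n(\pi(g))\tilde X_n)\cdots{\exp}(-\lambda_1(\pi(g))\tilde X_1)\,g,\ \pi(g)\bigr)$, a composition of flows of the finitely many fixed horizontal lifts $\tilde X_i$ evaluated at smoothly varying times; smoothness is therefore manifest, invertibility is clear factor by factor, and group-compatibility comes from Remark \ref{rm:vector_holonomy}, with no auxiliary bundle needed. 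You instead transport radially, and since the individual holonomies $\mathrm{hol}_{\gamma_y}^{\mathcal H}$ are a priori only a pointwise family, you must assemble them into one smooth map; your device --- the pullback $F^*\mathfrak G\to B\times[0,1]$ along $F(b,t)=\kappa^{-1}(tb)$ and the time-one flow of the $F^*\mathcal H$-horizontal lift of $\partial_t$ --- is a legitimate $m$-parameter analogue of Lemma \ref{lm:ancont} and does the job. What each buys: the paper's staircase is shorter and coordinate-explicit but depends on an ordering of the coordinate directions; your radial transport is more canonical, and your backward-flow argument makes the diffeomorphism property fully explicit, at the cost of the pullback machinery and some minor boundary care at $t=0,1$ (handled by shrinking the ball and extending $F$ to an open interval of times). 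Two small points to make explicit in a final write-up: the completeness of $\vec X$ up to time one is precisely the Ehresmann property from part (a) of the preceding proposition applied to the radial curves, and the claim that pulling back a group connection along an arbitrary smooth map again yields a group connection, while true and routine, extends the curve case stated in the remark before Lemma \ref{lm:ancont} and deserves a line of verification.
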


\begin{proof}
Let $p$ be any point of $M$. Let us consider an open neighborhood $\mathcal U$ of $p$ endowed with coordinates $\lambda_1,\ldots,\lambda_n$ vanishing at $p$ and representing $\mathcal U$ as the open unit cube  $\left(-\frac{1}{2},\frac{1}{2}\right)^n$. Let us consider the vector fields in $\mathcal U$, $X_i = \frac{\partial}{\partial\lambda_i}$, and its horizontal liftings $\tilde X_i$ to $\mathfrak{G}|_{\mathcal U}$. Note that the diffeomorphism given by the following formula:
$$\psi\colon \mathfrak{G}\mid_{\mathcal{U}} \, \to  \mathfrak{G}_p\times \mathcal U,\quad g \mapsto ({\rm exp}(-\lambda_n(p)\tilde X_n)\cdots   {\rm exp}(-\lambda_1(p)\tilde X_1)g ,\pi(g))$$
is, by Remark \ref{rm:vector_holonomy}, compatible with the group operation and therefore
a smooth group bundle trivialization of $\mathfrak{G}|_{\mathcal U}$.
\end{proof}

\subsection{On the space of group connections} Let us examine the exact sequence \eqref{eq:se_jet}. For $x\in M$, the tangent space to $\mathfrak{G}$ at $\mathfrak e_x$ has a canonical decomposition,
$$T_{\mathfrak e_x}\mathfrak{G} = T_{\mathfrak e_x}(\mathfrak e(M))\oplus {\rm Lie}(\mathfrak{G}_x).$$
The isomorphism $\mathrm{d}_x\mathfrak e$ between $T_xM$ and $T_{\mathfrak e_x}(\mathfrak e(M))$ induces a canonical isomorphism,
$$\iota\colon K^1\mathfrak{G}\xrightarrow{\sim} {\rm Lin}(TM,{\rm Lie}(\mathfrak{G})),\quad j_x^1g \mapsto \mathrm{d}_xg - \mathrm{d}_x\mathfrak e.$$
Such isomorphism is a group bundle isomorphism. It follows that $K^1\mathfrak G\to M$ is a vector bundle, its group operation is commutative. For sections $g$ and $\tilde g$ of $\mathfrak{G}$ with $g(x) =\tilde g(x) = \mathfrak e_x$ we have:
$$j^1_xg\cdot j_x^1\tilde g = j^1_x\tilde g\cdot j_x^1g$$
Sections of $K^1\mathfrak{G}\to M$ can be seen as $1$-forms in $M$ with values in ${\rm Lie}(\mathfrak G)$.

\begin{remark}
It follows that 
$\pi_{1,0}\colon J^1\mathfrak{G}\to \mathfrak{G}$ is endowed of a canonical structure of affine bundle modeled over the pullback to $\mathfrak G$ of the vector bundle ${\rm Lin}(TM,{\rm Lie}(\mathfrak{G}))\to M.$
Given $\varphi_x \in {\rm Lin}(T_xM,{\rm Lie}(\mathfrak{G}_x))$ and $j_x^1\in J^1\mathfrak{G}$ we define:
$$j^1_xg + \varphi_x = \iota^{-1}(\varphi_x)\cdot j^1_xg,$$
simmilarly given $j^1_xg$ and $j^1_x\tilde g \in J^1\mathfrak G$ with $g(x) = \tilde g(x)$ we define the difference:
$$j^1_x\tilde g -  j^1_x g = \iota(j^1_x\tilde g \cdot j^1_x g^{-1})$$
\end{remark}

\begin{lemma}\label{lm:arduo}
Let $j^1_xh\in J^1\mathfrak{G}$ and $j^1_xg\in K^1\mathfrak G$, then $$\iota(j^1_xh\cdot j^1_xg \cdot j^1_xh^{-1}) = {\rm Adj}_{h(x)}\circ \iota(j^1_xg).$$
\end{lemma}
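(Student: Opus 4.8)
The plan is to reduce the statement to an explicit computation of the differential of a fiberwise conjugation. First I would use the multiplication law on $J^1\mathfrak{G}$ to rewrite the left-hand side: since $(j^1_x g)\cdot(j^1_x h) = j^1_x(g\cdot h)$ and, because $h\cdot h^{-1}=\mathfrak e$, inversion on $J^1\mathfrak{G}$ sends $j^1_x h$ to $j^1_x(h^{-1})$, one has
$$j^1_x h\cdot j^1_x g\cdot j^1_x h^{-1} = j^1_x(h\cdot g\cdot h^{-1}),$$
where $h\cdot g\cdot h^{-1}$ denotes the local section $x'\mapsto \mathfrak{m}(\mathfrak{m}(h(x'),g(x')),\mathfrak{i}(h(x')))$. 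Since $g(x)=\mathfrak{e}_x$, this section takes the value $\mathfrak{e}_x$ at $x$, so its $1$-jet lies in $K^1\mathfrak{G}$ and $\iota$ applies. Everything then comes down to computing $\iota(j^1_x(hgh^{-1}))=\mathrm{d}_x(hgh^{-1})-\mathrm{d}_x\mathfrak{e}$ and comparing it, direction by direction, with ${\rm Adj}_{h(x)}\circ\iota(j^1_x g)$.

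To carry out the computation I would introduce the fiberwise map $\Psi\colon\mathfrak{G}\times_M\mathfrak{G}\times_M\mathfrak{G}\to\mathfrak{G}$, $(p,q,r)\mapsto \mathfrak{m}(\mathfrak{m}(p,q),\mathfrak{i}(r))$, so that $hgh^{-1}=\Psi\circ(h,g,h)$. Fix $\vec v\in T_xM$ and set $a=h(x)$, $\xi=\mathrm{d}_x h(\vec v)$, $\zeta=\mathrm{d}_x g(\vec v)$. By the chain rule, $\mathrm{d}_x(hgh^{-1})(\vec v)=\mathrm{d}\Psi_{(a,\mathfrak{e}_x,a)}(\xi,\zeta,\xi)$, where $(\xi,\zeta,\xi)$ is tangent to the triple fiber product (all three components project to $\vec v$ under $\mathrm{d}\pi$). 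The key algebraic step is to write $\zeta=\mathrm{d}_x\mathfrak{e}(\vec v)+\eta$ with $\eta:=\iota(j^1_x g)(\vec v)\in{\rm Lie}(\mathfrak{G}_x)$ vertical, and correspondingly split
$$(\xi,\zeta,\xi)=(\xi,\mathrm{d}_x\mathfrak{e}(\vec v),\xi)+(0,\eta,0).$$
Both summands are again tangent to the fiber product (the first because all three components project to $\vec v$, the second because $\eta$ is vertical), so linearity of $\mathrm{d}\Psi$ splits the computation into two terms.

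The first term is $\mathrm{d}\Psi_{(a,\mathfrak{e}_x,a)}(\xi,\mathrm{d}_x\mathfrak{e}(\vec v),\xi)=\mathrm{d}_x\big(\Psi\circ(h,\mathfrak{e},h)\big)(\vec v)$; but $\Psi\circ(h,\mathfrak{e},h)$ is the section $x'\mapsto h(x')\mathfrak{e}_{x'}h(x')^{-1}=\mathfrak{e}_{x'}$, i.e. the identity section itself, so this term equals $\mathrm{d}_x\mathfrak{e}(\vec v)$. The second term I would compute along a curve $q(s)$ lying in the single fiber $\mathfrak{G}_x$ with $q(0)=\mathfrak{e}_x$ and $\dot q(0)=\eta$: there $\Psi(a,q(s),a)=a\,q(s)\,a^{-1}$ is precisely the fiberwise inner automorphism $\mathrm{Inn}_{a}$ of the Lie group $\mathfrak{G}_x$, whose differential at $\mathfrak{e}_x$ is by definition ${\rm Adj}_{a}={\rm Adj}_{h(x)}$. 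Hence $\mathrm{d}\Psi_{(a,\mathfrak{e}_x,a)}(0,\eta,0)={\rm Adj}_{h(x)}(\eta)$. Adding the two terms and subtracting $\mathrm{d}_x\mathfrak{e}(\vec v)$ gives $\iota(j^1_x(hgh^{-1}))(\vec v)={\rm Adj}_{h(x)}(\eta)={\rm Adj}_{h(x)}\big(\iota(j^1_x g)(\vec v)\big)$, and since $\vec v$ was arbitrary this is the claim.

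The step I expect to be the real obstacle is precisely the decomposition in the second paragraph. A naive attempt would freeze $h$ and differentiate only the conjugated argument, but in the bundle the base point moves along $\vec v$, so the variations of the two outer copies of $h$ are genuinely present and a priori contribute extra terms. What makes them harmless is that, to first order, they only ever multiply the \emph{value} $g(x)=\mathfrak{e}_x$ and never its derivative; the clean way to see this cancellation is the splitting above, in which all the horizontal data (the variation of $h$ together with the horizontal part of the variation of $g$) is collected into $\Psi\circ(h,\mathfrak{e},h)$, which collapses to the identity section because conjugation fixes the identity. Only the vertical part $\eta$ then survives, and it is acted on by the honest adjoint representation.
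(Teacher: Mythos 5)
Your proof is correct and is essentially the paper's argument: the paper writes $h\cdot g\cdot h^{-1} = \Phi_h\circ g$ for the fibre-preserving conjugation $\Phi_h\colon p\mapsto h(\pi(p))\,p\,h(\pi(p))^{-1}$ and computes $\mathrm{d}\Phi_h\circ\mathrm{d}_xg-\mathrm{d}_x\mathfrak e = \mathrm{d}\Phi_h\left(\mathrm{d}_xg-\mathrm{d}_x\mathfrak e\right)={\rm Adj}_{h(x)}\circ\iota(j^1_xg)$, using exactly your two facts, namely that conjugation fixes the identity section (your first summand) and that it restricts on the fibre $\mathfrak{G}_x$ to $\mathrm{Inn}_{h(x)}$ with differential ${\rm Adj}_{h(x)}$ at $\mathfrak e_x$ (your second). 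Your splitting of $(\xi,\zeta,\xi)$ through the triple fibre product is an unpacked version of the paper's linearity step, and the cancellation you flagged as the main obstacle is the same one the paper handles by absorbing the variation of $h$ into the single map $\Phi_h$.
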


\begin{proof} Let us consider $\Phi_h\colon \mathfrak{G}\to \mathfrak{G}$ the fibre preserving map that gives the internal automorphism $L_{h(y)}\circ R_{h(y)}^{-1}$ of $\mathfrak{G}_y$ for $y$ in a neighborhood of $x$. Then, we have:
$$\iota(j^1_xh\cdot j^1_xg \cdot j^1_xh^{-1}) = \mathrm{d}_x(h\cdot g \cdot h^{-1}) - \mathrm{d}_x{\mathfrak e} =$$
$$\mathrm{d}\Phi_h \circ \mathrm{d}L_g \circ \mathrm{d}_x\mathfrak e - \mathrm{d}_x\mathfrak e = \mathrm{d}\Phi_h\circ(\mathrm{d}L_g\circ \mathrm{d}_x\mathfrak e - \mathrm{d}_x\mathfrak e)=$$ 
$$\mathrm{d}\Phi_h(\mathrm{d}_xg - \mathrm{d}_x\mathfrak e) =
{\rm Adj}_{h(x)}\circ \iota(j^1_xg).$$
\end{proof}

Let us consider the following group bundle
${\rm Lin}(TM,{\rm Lie}(\mathfrak{G}))\rtimes \mathfrak{G}\to M$
defined as the semidirect product:
$$(\varphi, g)\cdot(\psi, h) = (\varphi + {\rm Adj}_{g}\circ \psi, g\cdot h).$$ 
This group bundle is isomorphic to $J^1\mathfrak{G}$, but not in a canonical way. 
Let us see that group connections in $\mathfrak G\to M$ are equivalent to decompositions of $J^1\mathfrak{G}$
as such semidirect product.

\begin{proposition}
Let $\mathcal H$ be a group connection in $\pi\colon \mathfrak G\to M$, and $\sigma$ its induced section of $\pi_{1,0}\colon J^1\mathfrak{G}\to \mathfrak{G}$. The following map,
$$\Psi \colon J^1\mathfrak{G} \xrightarrow{\,\sim\,} {\rm Lin}(TM,{\rm Lie}(\mathfrak{G})) \rtimes \mathfrak{G}, \quad j_x^1g \mapsto (j^1_xg\cdot \iota(\sigma(g(x))^{-1}), g(x))$$
is a group bundle isomorphism. Reciprocally, any such isomorphism defines a group connection by setting
$\sigma(g) = \Psi^{-1}(0,g)$ for all $g\in G$.
\end{proposition}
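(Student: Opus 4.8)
The plan is to prove both implications by reducing everything to three structural facts already available: the induced section $\sigma$ of a group connection is a group bundle morphism (Definition \ref{df:groupc}(a)); $K^1\mathfrak{G}$ is an abelian group bundle which $\iota$ linearizes, turning products into sums; and the conjugation formula of Lemma \ref{lm:arduo}. First I would check that $\Psi$ is well defined and fiber preserving. For $j^1_xg\in J^1\mathfrak{G}$ one has $\pi_{1,0}(\sigma(g(x)))=g(x)$, so the element $j^1_xg\cdot\sigma(g(x))^{-1}$ projects under $\pi_{1,0}$ to $g(x)\cdot g(x)^{-1}=\mathfrak e_x$ and therefore lies in $K^1\mathfrak{G}$; its image $\iota\big(j^1_xg\cdot\sigma(g(x))^{-1}\big)$ is then a genuine element of ${\rm Lin}(T_xM,{\rm Lie}(\mathfrak{G}_x))$, and both coordinates of $\Psi(j^1_xg)$ sit over the same point $x\in M$.

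The heart of the argument is verifying that $\Psi$ restricts to a group homomorphism on each fiber. Writing $a=j^1_xg$, $b=j^1_xh$, $u=a\cdot\sigma(g(x))^{-1}$ and $v=b\cdot\sigma(h(x))^{-1}$, with $u,v\in K^1\mathfrak{G}$, the hypothesis that $\sigma$ is a group morphism gives $\sigma((g\cdot h)(x))^{-1}=\sigma(h(x))^{-1}\sigma(g(x))^{-1}$. Substituting $a=u\,\sigma(g(x))$ and $b=v\,\sigma(h(x))$ and cancelling the inner factor $\sigma(h(x))\sigma(h(x))^{-1}$ yields
$$a\cdot b\cdot\sigma((g\cdot h)(x))^{-1} = u\cdot\big(\sigma(g(x))\,v\,\sigma(g(x))^{-1}\big).$$
Applying $\iota$, using that $K^1\mathfrak{G}$ is abelian so that $\iota$ converts this product into a sum, and invoking Lemma \ref{lm:arduo} on the jet $\sigma(g(x))$ (whose base point is $g(x)$) together with the kernel element $v$, gives
$$\iota\big(a\,b\,\sigma((g\cdot h)(x))^{-1}\big) = \iota(u) + {\rm Adj}_{g(x)}\circ\iota(v).$$
Since the right-hand side is precisely the first coordinate of $\Psi(a)\cdot\Psi(b)$ in the semidirect product, while the second coordinates agree as $g(x)\cdot h(x)$, this establishes $\Psi(a\cdot b)=\Psi(a)\cdot\Psi(b)$.

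For bijectivity I would exhibit the explicit inverse $(\varphi,g)\mapsto \iota^{-1}(\varphi)\cdot\sigma(g)$, which is smooth because $\iota^{-1}$, $\sigma$ and the multiplication of $J^1\mathfrak{G}$ are smooth; substituting into $\Psi$ and using $\iota\circ\iota^{-1}={\rm id}$ and $\sigma(g)\cdot\sigma(g)^{-1}=\mathfrak e_x$ shows it is a two-sided inverse, so $\Psi$ is a group bundle isomorphism. For the converse, given any isomorphism $\Psi$ fitting the commutative ladder between \eqref{eq:se_jet} and the defining exact sequence of the semidirect product, that is with ${\rm pr}_2\circ\Psi=\pi_{1,0}$ and $\Psi|_{K^1\mathfrak{G}}=\iota$, I would set $\sigma(g)=\Psi^{-1}(0,g)$. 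Then $\pi_{1,0}(\sigma(g))={\rm pr}_2(0,g)=g$, so $\sigma$ is a section of $\pi_{1,0}$; and since $(0,g)\cdot(0,h)=(0,g\cdot h)$ while $\Psi^{-1}$ is a homomorphism, we get $\sigma(g)\cdot\sigma(h)=\sigma(g\cdot h)$, so $\sigma$ is a group bundle morphism and hence defines a group connection by Definition \ref{df:groupc}(a).

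The main obstacle is the homomorphism computation: one must track the order of factors carefully so that the conjugation $\sigma(g(x))\,v\,\sigma(g(x))^{-1}$ appears rather than an $h$-conjugation, since only this produces the twist ${\rm Adj}_{g(x)}$ demanded by the semidirect product law, and the abelianness of $K^1\mathfrak{G}$ is exactly what lets the residual kernel factor $u$ split off additively without extra terms. Everything else is bookkeeping with the exact sequence \eqref{eq:se_jet} and the smoothness of the structure maps.
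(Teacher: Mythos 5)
Your proposal is correct and takes essentially the same route as the paper: the paper's own proof simply notes that $\sigma(\mathfrak{G}_x)\cap K^1\mathfrak{G}_x=\{\mathfrak e_x\}$ yields the internal semidirect product decomposition $j^1_xg\mapsto\bigl(j^1_xg\cdot\sigma(g(x))^{-1},g(x)\bigr)$ and then composes with $\iota$, citing Lemma \ref{lm:arduo} for the ${\rm Adj}$ twist --- precisely the three ingredients ($\sigma$ a group morphism, $\iota$ linearizing $K^1\mathfrak{G}$, Lemma \ref{lm:arduo}) on which you build. Your explicit homomorphism computation, the formula for the inverse, and the treatment of the converse (including the compatibility condition ${\rm pr}_2\circ\Psi=\pi_{1,0}$, which the paper's terse proof leaves implicit) just unwind that abstract splitting argument, and your reading of the first coordinate as $\iota\bigl(j^1_xg\cdot\sigma(g(x))^{-1}\bigr)$ matches the paper's intent.
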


\begin{proof}
For each $x\in M$ we have $\sigma(\mathfrak{G}_x) \cap K^1\mathfrak{G}_x = \{\mathfrak e_x\}$. Therefore, we have a semidirect product decomposition 
$$(J^1\mathfrak{G})_x \xrightarrow{\sim} (K^1\mathfrak{G})_x\rtimes \mathfrak{G}_x,\quad 
j_x^1g\mapsto (j_x^1g\cdot \sigma(g(x))^{-1},g(x)).$$ 
Finally we compose with $\iota$ and, by Lemma \ref{lm:arduo}, we obtain the required Lie group isomorphism.
\end{proof}

Let us recall that if $G$ is a Lie group and $\rho\colon G\to {\rm End}(V)$ is a representation then a $1$-cocycle of $G$ with values in $V$ is a smooth map $\gamma\colon G\to V$ satisfying:
$$\gamma(g\cdot h) = \rho(g)(\gamma(h)) + \gamma(g).$$
The set $\mathcal Z^1(G,V)$ of $1$-cocycles is a vector space. In particular the space of smooth maps from $G$ to ${\rm Lie}(G)$ is identified with $\mathfrak X(G)$ through the so called right-logarithmic derivative:
$$\mathfrak X(G) \mapsto \mathcal C^\infty(G,{\rm Lie}(G)), \quad X \to \ell_d(X), \quad \ell_d(X) (g) = dR_{g}^{-1}(X_g).$$
The following, is a well known characterization of cocycles of a Lie group with values in its Lie algebra. The proof is elementary and left to reader.

\begin{proposition}\label{lm:cocycles}
Let $G$ be a Lie group; a vector field $X\colon G\to TG$ is a Lie group morphism if and only if $\ell_d(X)$ is a $1$-cocycle of $G$ with values in ${\rm Lie}(G)$. That is, $\gamma = \ell_d(X)$ satisfies:
$$\gamma(g\cdot h) = {\rm Adj}_g(\gamma(h)) + \gamma(g).$$
\end{proposition}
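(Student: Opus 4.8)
The plan is to unwind the meaning of ``$X$ is a Lie group morphism'' in terms of the tangent group structure on $TG$, and then transport the resulting identity to ${\rm Lie}(G)$ via the right-logarithmic derivative. First I would recall that $TG$ carries a canonical Lie group structure whose multiplication is the differential $\mathrm{d}\mathfrak m\colon T(G\times G) = TG\times TG \to TG$ of the multiplication of $G$. Concretely, differentiating $\mathfrak m$ along curves through $g$ and $h$ shows that, for $v\in T_gG$ and $w\in T_hG$,
\[
v\cdot w = \mathrm{d}R_h(v) + \mathrm{d}L_g(w)\in T_{gh}G.
\]
Since $X$ is by hypothesis a smooth section of $TG\to G$, saying that $X\colon G\to TG$ is a morphism of Lie groups amounts precisely to the homomorphism condition $X_{gh} = X_g\cdot X_h$, that is,
\[
X_{gh} = \mathrm{d}R_h(X_g) + \mathrm{d}L_g(X_h), \qquad g,h\in G. \quad (\star)
\]

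The second step is to rewrite $(\star)$ through $\gamma = \ell_d(X)$. By definition $\gamma(g) = \mathrm{d}R_g^{-1}(X_g)\in {\rm Lie}(G)$, equivalently $X_g = \mathrm{d}R_g(\gamma(g))$. Substituting these expressions for $X_g$, $X_h$ and $X_{gh}$ turns $(\star)$ into an identity among vectors in $T_{gh}G$, which I would then pull back to $T_eG = {\rm Lie}(G)$ by applying the linear isomorphism $(\mathrm{d}R_{gh})^{-1}$. Because this map is invertible, the resulting identity in ${\rm Lie}(G)$ is \emph{equivalent} to $(\star)$, and this is exactly what delivers the ``if and only if'' rather than a single implication.

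The computation rests on two elementary translation identities. From $R_h\circ R_g = R_{gh}$ we get $\mathrm{d}R_h\circ \mathrm{d}R_g = \mathrm{d}R_{gh}$, so the first summand $\mathrm{d}R_h(X_g) = \mathrm{d}R_h\,\mathrm{d}R_g(\gamma(g))$ becomes $\mathrm{d}R_{gh}(\gamma(g))$. For the second summand the key is the conjugation identity $R_{gh}\circ L_g\circ R_{g^{-1}} = L_g\circ R_h$ (indeed $x\mapsto xg^{-1}\mapsto gxg^{-1}\mapsto gxg^{-1}gh = gxh$, and $L_g\circ R_h$ also sends $x$ to $gxh$), whose differential at $e$ reads $\mathrm{d}R_{gh}\circ {\rm Adj}_g = \mathrm{d}L_g\circ \mathrm{d}R_h$, upon recalling that ${\rm Adj}_g = \mathrm{d}L_g\circ \mathrm{d}R_{g^{-1}}$ on $T_eG$. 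Hence $\mathrm{d}L_g(X_h) = \mathrm{d}L_g\,\mathrm{d}R_h(\gamma(h)) = \mathrm{d}R_{gh}({\rm Adj}_g(\gamma(h)))$. Collecting the two summands and applying $(\mathrm{d}R_{gh})^{-1}$ yields exactly
\[
\gamma(gh) = {\rm Adj}_g(\gamma(h)) + \gamma(g),
\]
the asserted cocycle condition.

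I do not expect a genuine obstacle here: once the tangent-group multiplication formula is available, the proof is a direct substitution. The only points demanding care are the bookkeeping of the differentials of left and right translations and the observation that $\mathrm{d}R_{gh}$ is a linear isomorphism, which is what upgrades one implication into an equivalence. As a sanity check, setting $g=h=e$ recovers $\gamma(e)=0$, equivalently $X_e = 0_e$, consistent with $X$ sending the identity $e\in G$ to the identity $0_e\in TG$ of the tangent group.
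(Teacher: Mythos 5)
Your proof is correct, and there is nothing in the paper to compare it against: the paper explicitly states ``The proof is elementary and left to reader.'' Your route --- the tangent group multiplication $v\cdot w = \mathrm{d}R_h(v) + \mathrm{d}L_g(w)$, substitution via $X_g = \mathrm{d}R_g(\gamma(g))$, and the conjugation identity $\mathrm{d}L_g\circ\mathrm{d}R_h = \mathrm{d}R_{gh}\circ \mathrm{Adj}_g$ on $T_eG$ --- is evidently the intended elementary argument: it amounts to the right trivialization identifying $TG$ with the semidirect product $\mathrm{Lie}(G)\rtimes_{\mathrm{Adj}}G$, which is precisely the structure $(\varphi,g)\cdot(\psi,h) = (\varphi + \mathrm{Adj}_g\circ\psi,\, g\cdot h)$ that the paper introduces just before this proposition. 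Your observation that invertibility of $\mathrm{d}R_{gh}$ upgrades the implication to an equivalence correctly secures the ``if and only if.''
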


Let us consider $\sigma$ and $\tilde\sigma$ two group connections on $\mathfrak G\to M$. The affine bundle structure of $J^1\mathfrak{G}\to \mathfrak{G}$ allows us to define their difference $\theta = \sigma - \tilde\sigma$ in the following way:
$$\theta \colon \mathfrak{G} \to {\rm Lin}(TM,{\rm Lie}(\mathfrak{G})),
\quad g \mapsto \theta(g):= \sigma(g) - \tilde\sigma(g) = \iota(\sigma(g)\tilde\sigma(g)^{-1})
.$$

\begin{lemma}\label{lm:diff}
Let $\mathcal H$ and $\tilde{\mathcal H}$ be two group conections with induced sections $\sigma$ and $\tilde\sigma$ respectively. The difference
$\theta = \sigma - \tilde \sigma$ is $1$-form in $M$ with values in the space of $1$-cocycles $Z^{1}(\mathfrak{G},{\rm Lie}(\mathfrak{G}))$. That is, for each $\vec X\in T_xM$ the map:
$$i_{\vec X}\theta \colon \mathfrak{G}_x \to {\rm Lie}(\mathfrak{G}_x),\quad g\mapsto \theta(g)(\vec X)$$
is a $1$-cocycle. 
\end{lemma}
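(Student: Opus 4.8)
The plan is to verify directly the cocycle identity of Proposition \ref{lm:cocycles}: for a fixed $\vec X\in T_xM$ and $g,h\in\mathfrak{G}_x$ I must show
$$\theta(g\cdot h)(\vec X) = {\rm Adj}_g\big(\theta(h)(\vec X)\big) + \theta(g)(\vec X).$$
Since $\theta(g)\in{\rm Lin}(T_xM,{\rm Lie}(\mathfrak{G}_x))$ is by construction linear in $\vec X$, the ``$1$-form'' part of the claim is automatic and the entire content reduces to the identity at the level of linear maps, namely $\theta(g\cdot h) = {\rm Adj}_g\circ\theta(h) + \theta(g)$, which I then read off componentwise in $\vec X$.

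First I would unwind the definition $\theta(g)=\iota\big(\sigma(g)\cdot\tilde\sigma(g)^{-1}\big)$ at the product $g\cdot h$. Because $\mathcal H$ and $\tilde{\mathcal H}$ are group connections, condition (a) of Definition \ref{df:groupc} says that $\sigma$ and $\tilde\sigma$ are group bundle morphisms into $J^1\mathfrak{G}$, so $\sigma(g\cdot h)=\sigma(g)\cdot\sigma(h)$ and likewise for $\tilde\sigma$. Writing $a=\sigma(g)$, $\tilde a=\tilde\sigma(g)$, $b=\sigma(h)$, $\tilde b=\tilde\sigma(h)$, this gives
$$\theta(g\cdot h)=\iota\big(a\cdot b\cdot\tilde b^{-1}\cdot\tilde a^{-1}\big).$$

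The key algebraic step is to recognize $\beta:=b\cdot\tilde b^{-1}=\iota^{-1}(\theta(h))$ and $\alpha:=a\cdot\tilde a^{-1}=\iota^{-1}(\theta(g))$ as elements of $K^1\mathfrak{G}$, and to rewrite, by inserting $a^{-1}\cdot a$,
$$a\cdot b\cdot\tilde b^{-1}\cdot\tilde a^{-1}= a\cdot\beta\cdot\tilde a^{-1}=\big(a\cdot\beta\cdot a^{-1}\big)\cdot\big(a\cdot\tilde a^{-1}\big)=\big(a\cdot\beta\cdot a^{-1}\big)\cdot\alpha.$$
Both factors lie in the kernel $K^1\mathfrak{G}$: the conjugate $a\cdot\beta\cdot a^{-1}$ projects under $\pi_{1,0}$ to $\mathfrak e_x$, since $a$ projects to $g$ and $\beta$ to $\mathfrak e_x$. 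As $K^1\mathfrak{G}$ is abelian and $\iota$ is a group bundle isomorphism onto the additive bundle ${\rm Lin}(TM,{\rm Lie}(\mathfrak{G}))$, it is additive, whence
$$\theta(g\cdot h)=\iota\big(a\cdot\beta\cdot a^{-1}\big)+\iota(\alpha).$$
Applying Lemma \ref{lm:arduo} with $a=\sigma(g)$, which projects to $g$, gives $\iota\big(a\cdot\beta\cdot a^{-1}\big)={\rm Adj}_g\circ\iota(\beta)={\rm Adj}_g\circ\theta(h)$, while $\iota(\alpha)=\theta(g)$; evaluating at $\vec X$ produces exactly the cocycle identity.

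I expect the main obstacle to be purely the bookkeeping: keeping track of which of $a,\tilde a,b,\tilde b$ projects to which point so that the conjugation term genuinely lands in $K^1\mathfrak{G}$ and Lemma \ref{lm:arduo} applies with base point $g$; and, in the factorization, inserting $a^{-1}\cdot a$ rather than $\tilde a^{-1}\cdot\tilde a$, so that the conjugation is by $a$ and matches the adjoint ${\rm Adj}_g$ appearing in Lemma \ref{lm:arduo}. Once these points are fixed, the additivity of $\iota$ together with a single invocation of Lemma \ref{lm:arduo} completes the argument.
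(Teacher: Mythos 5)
Your proof is correct and follows essentially the same route as the paper's: expand $\sigma(gh)\tilde\sigma(gh)^{-1}$ via the morphism property, regroup so that $\iota^{-1}(\theta(h))$ is conjugated by a jet projecting onto $g$, and conclude with Lemma \ref{lm:arduo} together with the additivity of $\iota$ on the abelian kernel $K^1\mathfrak{G}$. The only cosmetic difference is that you conjugate by $\sigma(g)$ whereas the paper's factorization conjugates by $\tilde\sigma(g)$; both project onto $g$, so Lemma \ref{lm:arduo} yields ${\rm Adj}_g$ either way.
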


\begin{proof}
Let us consider $g,h\in \mathfrak{G}$ with $\pi(g) = \pi(h) = x$ and $\vec X\in T_xM$. 
In order to simplfy the calculation we identify $K^1\mathfrak G$ with ${\rm Lin}(TM,{\rm Lie}(\mathfrak G))$ by means of the isomorphism $\iota$.

\begin{align*}
(i_{\vec X}\theta)(gh) &= (\sigma(gh)\tilde\sigma(gh)^{-1})(\vec X) \\
&=
\sigma(g)\tilde\sigma(g)^{-1}(\tilde\sigma(gh)\sigma(h)^{-1}\tilde\sigma(g)^{-1})^{-1}  (\vec X) \\ 
  &=  (\sigma(g) - \tilde \sigma(g)  - {\rm Adj}_g(\tilde\sigma(h) - \sigma(h) ) )(\vec X) \\
  &=
(\theta(g) + {\rm Adj}_g(\theta(h)))(\vec X) \\
  &= (i_{\vec X}\theta)(g) + {\rm Adj}_g((i_{\vec X}\theta)(h)).
\end{align*}
\end{proof}

From now on, given two group connections $\sigma$ and $\tilde\sigma$ we refer to the $1$-form $\theta$ defined in Lemma
\ref{lm:diff} as its difference $\theta = \sigma - \tilde\sigma$.

\begin{theorem}\label{th:GC_AS}
Let $\pi\colon \mathfrak{G}\to M$ be a locally trivial group bundle. Its space of group connections is an affine space modeled over the vector space $\Omega^1(M,\mathcal Z^1(\mathfrak{G},{\rm Lie}(\mathfrak{G}))$. 
\end{theorem}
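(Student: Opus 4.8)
The plan is to establish the three ingredients of an affine-space structure: that the model $\Omega^1(M,\mathcal Z^1(\mathfrak{G},{\rm Lie}(\mathfrak{G})))$ is a vector space (immediate, since $\mathcal Z^1(\mathfrak{G},{\rm Lie}(\mathfrak{G}))$ is a vector bundle over $M$ and $1$-forms with values in a fixed vector bundle form a vector space), that the set of group connections is nonempty, and that the assignment $(\theta,\sigma)\mapsto \sigma+\theta$ — with the sum taken in the affine bundle $\pi_{1,0}\colon J^1\mathfrak{G}\to\mathfrak{G}$ — is a well-defined, free and transitive action of the model on the set of group connections. The heart of the argument is a single bijective correspondence: after fixing one group connection $\sigma_0$, I would show that $\sigma\mapsto \sigma-\sigma_0$ is a bijection from the set of group connections onto $\Omega^1(M,\mathcal Z^1(\mathfrak{G},{\rm Lie}(\mathfrak{G})))$.

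To prove this correspondence I would exploit the semidirect product isomorphism $\Psi\colon J^1\mathfrak{G}\xrightarrow{\sim}{\rm Lin}(TM,{\rm Lie}(\mathfrak{G}))\rtimes\mathfrak{G}$ attached to $\sigma_0$. Under $\Psi$ the reference connection becomes $g\mapsto(0,g)$, while an arbitrary section $\sigma=\sigma_0+\theta$ of $\pi_{1,0}$ is sent to $g\mapsto(\theta(g),g)$, where $\theta(g)\in{\rm Lin}(T_{\pi(g)}M,{\rm Lie}(\mathfrak{G}_{\pi(g)}))$ is the value of the difference $\theta=\sigma-\sigma_0$. Since $\Psi$ is a group bundle isomorphism, $\sigma$ is a group connection exactly when $g\mapsto(\theta(g),g)$ is a group bundle morphism; comparing with the semidirect product law $(\theta(g),g)\cdot(\theta(h),h)=(\theta(g)+{\rm Adj}_g\circ\theta(h),g\cdot h)$ shows this is equivalent to $\theta(gh)=\theta(g)+{\rm Adj}_g\circ\theta(h)$ for all composable $g,h$, that is, to $\theta$ being a $1$-form with values in $\mathcal Z^1(\mathfrak{G},{\rm Lie}(\mathfrak{G}))$ by the cocycle characterization of Proposition \ref{lm:cocycles}. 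This one computation yields both directions at once: the forward implication recovers Lemma \ref{lm:diff}, while the reverse implication is precisely the statement that $\sigma_0+\theta$ is again a group connection whenever $\theta$ is cocycle-valued. Freeness and transitivity of the action then follow formally from the free transitive action of the model vector bundle on the fibres of the affine bundle $\pi_{1,0}$, transitivity being Lemma \ref{lm:diff}.

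It remains to guarantee nonemptiness, which is where local triviality enters. On a trivializing neighborhood $U$ with $\mathfrak{G}|_U\cong U\times\mathfrak{G}_x$ the product (flat) connection is a group connection, so group connections exist locally. I would then glue them with a partition of unity $\{\rho_i\}$ subordinate to a locally finite trivializing cover $\{U_i\}$ carrying local group connections $\sigma_i$, setting $\sigma=\sum_i\rho_i\sigma_i$ as a pointwise affine combination, which is well defined since $\sum_i\rho_i=1$ makes it an affine combination in each affine fibre of $J^1\mathfrak{G}$. To see that $\sigma$ is again a group connection — a fibrewise, hence local, condition — I would fix an index $j$ and rewrite $\sigma|_{U_j}=\sigma_j+\sum_i\rho_i(\sigma_i-\sigma_j)$; each difference $\sigma_i-\sigma_j$ is cocycle-valued by Lemma \ref{lm:diff}, so the correction term is a $1$-form on $U_j$ with values in $\mathcal Z^1(\mathfrak{G},{\rm Lie}(\mathfrak{G}))$, whence the correspondence above shows $\sigma|_{U_j}$ is a group connection.

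The main obstacle is the core equivalence of the second paragraph — verifying that the group-morphism condition on $\sigma$ translates exactly into the cocycle condition on $\theta$. Everything else (the vector-space structure of the model, freeness and transitivity, and the gluing) is then formal, once the affine-bundle structure of $\pi_{1,0}$ and the semidirect product description of $J^1\mathfrak{G}$ are in hand. I expect the only delicate point in the gluing to be the bookkeeping of domains, ensuring that $\rho_i(\sigma_i-\sigma_j)$ extends by zero off $U_i\cap U_j$ so that the sum is a globally smooth cocycle-valued $1$-form on $U_j$.
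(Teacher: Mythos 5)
Your proof is correct and follows essentially the same route as the paper's: the core equivalence (the difference of two group connections is cocycle-valued, and adding a cocycle-valued $1$-form to a group connection yields a group connection) is the paper's Lemma \ref{lm:diff} plus Proposition \ref{lm:cocycles}, which you merely derive in one stroke from the semidirect-product isomorphism $\Psi$, and nonemptiness comes in both cases from local triviality together with the affine-bundle structure of $\pi_{1,0}\colon J^1\mathfrak{G}\to\mathfrak{G}$. Your partition-of-unity gluing simply makes explicit the ``standard argument on affine connections'' that the paper invokes without detail.
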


\begin{proof}
As we have seen in Lemma \ref{lm:diff}, the difference between the induced sections corresponding to two group connections is an element of $\Omega_1(M,\mathcal Z^1(\mathfrak{G},{\rm Lie}(\mathfrak{G}))$. Reciprocally, given a group connection with induced section $\sigma$ and $\theta\in \Omega_1(M,\mathcal Z^1(\mathfrak{G},{\rm Lie}(\mathfrak{G}))$, it follows from Propositon \ref{lm:cocycles} that $\sigma+\theta$ is a group bundle morphism and therefore it is the section corresponding to a group connection. From an standard argument on affine connections we obtain that the group connections in a locally trivial group bundle are in direct correspondence with the sections of an affine bundle and therefore, we also obtain that the space of group connections is not empty. 
\end{proof}

\section{Fibre bundles}\label{s_FB}

\subsection{Principal and associated bundles} Let us recall that a principal $G$-bundle\footnote{Here $G$ stands for a Lie group.} $\pi\colon P\to M$ is a smooth bundle endowed with a smooth right action $P\times G\to P$, $(p,g)\mapsto p\cdot g$ which is free and transitive on each fiber of $\pi$. An important concept we are going to use extensively is that of \emph{balanced construction} or \emph{associated bundle}. Let us consider $\pi\colon P\to M$ a principal $G$-bundle and $V$ a $G$-manifold, that is, a smooth manifold endowed with a left action $G\times V\to V$. In the direct product $P\times V$ we consider the action of $g$ given by,
$$(P\times V)\times G\to P\times V \to ((p,v),g)\mapsto (p\cdot g, g^{-1}v).$$
The space of orbits of such action is called the \emph{balanced construction} and denoted by $P\times_G V$. It admits a unique smooth structure such that the identification map,
$$\mathfrak q\colon P\times V \to P\times_G V,\quad (p,v) \mapsto [p,v]$$
is a submersion, and moreover, a principal $G$-bundle.\footnote{The name balanced construction comes from the fact that $G$-orbits in $P\times V$ are the equivalence class under the equivalence relation,
$\forall g\in G\,\forall p\in P\, \forall v\in V\,\,\,\,(p\cdot g,v) \sim (p, g\cdot v)$.}
It is endowed of a projection,
$$\bar\pi \colon P\times_GV \to M, \quad [p,v]\mapsto \pi(p)$$
that realizes it as bundle over $M$ with fiber $V$. Each element $p\in P$ induces an isomorphism\footnote{Because of this, in contexts in which the main geometric object is the balanced construction $E$, elements of $P$ are termed \emph{frames}.} between the fiber $(P\times_G V)_{\pi(p)}$ and $V$,
$$\hat{p}\colon V \xrightarrow{\sim} (P\times_G V)_{\pi(p)}, \quad v\mapsto [p,v].$$
Whenever the action of $G$ in $V$ is not clear by context, we will write $P\times_\alpha V$ to denote the bundle associated to the action $\alpha$ of $G$ in $V$.\\

An important feature of the associated bundles is that they are compatible with additional structure in $V$, as long as it is $G$-invariant. Let us assume that $V$ is endowed with some additional structure, namely, a vector space, Lie algebra, finite algebra, Lie group, etc, such that $G$ acts on $V$ by automorphisms. Then, we may push forward this structure to the fibers of $P\times_G V$ at any point by means of any frame. We will obtain that $\bar\pi\colon P\times_G V\to M$ is a bundle by vector spaces, Lie algebras, finite algebras, Lie groups, etc. In our applications, we will make use of the following, well known, examples.

\begin{example}
For each fiber $P_x$ let us consider ${\rm Aut}_G(P_x)\subset {\rm Diff}(P_x)$ the group of $G$-equivariant automorphisms of the $P_x$. Let us construct an associated bundle whose fiber at $x\in M$ is a group canonically isomorphic to ${\rm Aut}_G(P_x)$. Let us consider the adjoint action ${\rm Adj}\colon G\times G\to G$. It is an action by group automorphisms. The associated bundle
$P\times_{\rm Adj} G$ is then a group bundle over $M$. We call it the \emph{gauge bundle} of $P$,
$${\rm Gau}(P) = P\times_{\rm Adj} G\to M.$$
Now, for each $x\in M$ we have a natural isomorphism of groups,
$${\rm Aut}_G(P_x) \xrightarrow{\sim} {\rm Gau}(P)_x, \quad \sigma \to [p,\sigma(p) ] = \{(p,\sigma(p)) \,\colon\, p\in P_x\}.$$
Thus, ${\rm Gau}(P)\to M$ is a locally trivial group bundle of fiber $G$.
\end{example}

\begin{example}\label{ex:IsoP}
 We also may consider in $P$ the canonical left action defined by duality $g\cdot p := p\cdot g^{-1}$. Thus we may construct the balanced construction ${\rm Iso}(P) = P\times_G P$ but it is endowed with two projections onto $M$ $s[p,q] = \pi(p)$ and $t[p,q] = \pi(q)$. This ${\rm Iso}(P)\rightrightarrows M$ is endowed then of a groupoid structure:
 $$\mathfrak m\colon {\rm Iso}(P){}_s\hspace{-1mm}\times_t {\rm Iso}(P)\to {\rm Iso}(P), \quad ([q,r],[p,q] )\mapsto [p,r]$$
 $$\mathfrak i \colon {\rm Iso}(P)\to {\rm Iso}(P),\quad [p,q] \mapsto [q,p]$$
 $$\mathfrak e\colon M \to {\rm Iso}(P), x \to [p,p], \mbox{ where } \pi(p) = x.$$
 For each pair of fibers $P_x$, $P_y$ let us consider ${\rm Iso}_G(P_x,P_y)\subset {\mathcal C}^{\infty}(P_x,P_y)$ the space of  $G$-equivariant isomorphisms from $P_x$ to $P_y$. There are canonical isomorphisms,
 $${\rm Iso}_G(P_x,P_y) \xrightarrow{\sim} {\rm Iso}(P)_{xy}, \quad \sigma \mapsto [p,\sigma(p)]\mbox{ where } \pi(p) = x,$$
 compatible with composition and inversion. The grupoid ${\rm Iso}(P)\rightrightarrows M$ is thus called the groupoid of gauge isomorphisms of $P$ over $M$. It is, by definition, a transitive Lie groupoid. The stabilizer group bundle $\rm eq(s,t)\to M$ is canonically isomorphic to ${\rm Gau}(P)$,
 $${\rm Gau}(P) \hookrightarrow {\rm Iso}(P), \quad [p,g] \mapsto [p,p\cdot g].$$
\end{example}

\begin{example}
For each fiber $P_x$ let us consider ${\rm aut}_G(P_x)\subset \mathfrak X(P_x)$ the Lie algebra of $G$-invariant vector fields on $P_x$. As the infinitesimal version of the above example, we may construct an associated bundle whose fiber at $x\in M$ is a Lie algebra canonically isomorphic to ${\rm aut}_G(P_x)$. Let us consider the adjoint action ${\rm adj}\colon G\times {\rm Lie}(G)\to {\rm Lie}(G)$. It is an action by Lie algebra isomorphisms. The associated bundle
$P\times_{\rm adj} {\rm Lie}(G)$ is a bundle by Lie algebras over $M$. We call it the \emph{infinitesimal gauge bundle} of $P$,
$${\rm gau}(P) = P\times_{\rm adj} {\rm Lie}(G)\to M.$$
Now, for each $x\in M$ we have a canonical isomorphism of Lie algebras,
$${\rm gau}(P)_x \xrightarrow{\sim} {\rm aut}_G(P_x), \quad [p,A]\mapsto B$$
where 
$$B_{p\cdot g} = \left.\frac{\mathrm{d}}{\mathrm{d}t}\right|_{t=0}(p\cdot {\exp}(tA) \cdot g).$$
\end{example}

\begin{example}\label{ex:vector_associated}
Let $\pi\colon E\to M$ be a vector bundle of rank $n$. Let us consider $L(E)\to M$ the bundle of linear frames on $E$, that is, for each $x\in M$, $L(E)_x$ is the set of linear bases of the vector space $E_x$. As $L(E)$ is an open subset of $E\times_M \ldots \times_M E$ it is a smooth manifold. There is a natural action of ${\rm GL}_n$ on $E$ by linear changes of basis.
$$((e_1,\ldots,e_n),(\sigma_{ij}))\mapsto \left(\sum e_i\sigma_{i1},\ldots, \sum e_i\sigma_{in}\right)$$
that gives $L(E)$ the structure of a ${\rm GL}_n$-principal bundle. Any linear representation of ${\rm GL}_n$ produces an associated vector bundle. In particular, the tautological representation\footnote{${\rm GL}_n$ acting in $\mathbb R^n$  by matrix composition, here elements of $\mathbb R^n$ as thought as column vectors.} recovers the vector bundle,
$$L(E)\times_{{\rm GL}_n} \mathbb R^n\xrightarrow{\sim}E, \quad
[(e_1,\ldots,e_n),(\lambda_1,\ldots,\lambda_n)]\mapsto \sum \lambda_ie_i.
$$
\end{example}

\begin{example}\label{ex:Aut_aut}
Let $\pi\colon P\to M$ a principal $G$-bundle, $V$ a $G$-manifold with action $\alpha$ and 
$\bar\pi\colon P\times_\alpha V = E\to M$ its corresponding associated bundle. Let us consider the identification map $\mathfrak q\colon P\times V \to E$. Let us fix $x\in M$ and $\mathfrak q_x = \mathfrak q|_{P_x\times V}$.  
Let us note that any $G$-equivariant automorphism $\sigma$ of $P_x$ factorizes through $\mathfrak q_x$ and induces a diffeomorphism $\bar\sigma$ of $E_x$, $\bar\sigma([p,v])= [\sigma(p),v]$. We obtain a group morphism ${\rm Gau}(P)_x \to {\rm Diff}(E_x)$ whose kernel is conjugated with ${\rm ker}(\alpha)$. By considering the image of these morphisms as $x$ varies in $M$ we obtain a quotient group bundle,
$$\alpha_*\colon {\rm Gau}(P)\to {\rm Aut}(E)$$
where ${\rm Aut}(E)\to M$ is a locally trivial group bundle of fiber $G/{\rm ker}(\alpha)$ and ${\rm Aut}(E)_x$ is a finite dimensional Lie subgroup of ${\rm Diff}(E_x)$ for all $x\in M$. 
By derivation of the above morphism we obtain a Lie algebra bundle morphism,
$$\alpha_*'\colon {\rm gau}(P) \to {\rm aut}(E)$$
where for all $x\in M$ ${\rm aut}(E_x)$ is a finite dimensional Lie algebra of vector fields in $E_x$. In particular if $X\in {\rm gau}(P_x)$ then $(X,0)$ is a $\mathfrak q_x$-projectable vector field in $P_x\times V$ and $\alpha_*'(X) = (\mathfrak q_x)_*(X,0)$.
Note that for faithful actions the above morphisms $\alpha_*$ and $\alpha_*'$ are isomorphims. 
\end{example}

\begin{example}
For the particular case of a vector bundle $E\to M$ we have ${\rm Aut}(E)\to M$ the group bundle of linear automorphisms of the fibers and ${\rm aut}(E)$ the Lie algebra bundle of vertical linear vector fields.  
\end{example}

\subsection{Principal and associated connections}

\begin{definition}
Let $G$ be a Lie group with Lie algebra $\mathfrak{g}$. Let $P\to M$ be a principal $G$-bundle. A {\em{principal connection}} on $P$ is a connection $\mathcal{H}$ on $P$ that is invariant by the action of $G$, i.e., for all $p\in P$ and all $g\in G$
\[\mathrm{d}_p \gamma_g(\mathcal{H}_p) = \mathcal{H}_{p\cdot g},
\] where $\gamma_g:P\to P$ denotes the diffeomorphism given by the action of $g$ on $P$. 
\end{definition}

Note that the smooth map $\beta_p:G\to P_x$ given by the action on the element $p$ induce an isomorphism between the vertical space $\mathrm{Ver}_p(P)$ and the Lie algebra $\mathfrak g$ which allows us to canonically associate to the distribution $\mathcal{H}$ a $\mathfrak g$-valued $1$-form $\omega$ on $P$ such that $\mathrm{Ker}(\omega_p)=\mathcal{H}_p$,
for all $p\in P$. 
The $G$-invariance of the connection $\mathcal{H}$
is equivalent to the $\mathrm{Ad}$-equivariance of $\omega$, that is 
\begin{equation}\label{eq:omegaequivariant}
\gamma_g^*\,\omega=\mathrm{Adj}_{g^{-1}}\circ\omega,
\end{equation}
for all $g\in G$. 
Therefore the covariant derivative operator is defined by $\nabla: \Gamma(P,M) \to \Omega^1(M;\mathfrak{g}), \epsilon \mapsto \epsilon^*{\omega}$ and the $\mathrm{Ad}$-equivariance condition means that $\nabla (\epsilon \cdot g) = \mathrm{Adj}_{g^{-1}} \circ  \nabla \epsilon$, for all $g\in G$ and all $\epsilon \in \Gamma(P,M)$.

\begin{definition}
Let $\pi:P\to M$ be a $G$-principal bundle and let $N$ be a manifold endowed with a smooth left action $G\times N\to N$. Consider the associated bundle $\bar\pi:P\times_G N \to M$ and denote by $\mathfrak{q}:P\times N\to P\times_G N$ the identification map. Given a principal connection $\mathcal{H}$ on $P$ the formula:
\[
\bar{\mathcal{H}}_{[p,n]} = \mathrm{d}_{(p,n)} \mathfrak{q}(\mathcal{H}_p\oplus \{0\}), \, p\in P, n\in N,
\]
defines 
a connection $\bar{\mathcal{H}}$ on $P\times_G N\to M$, 
called the \emph{associated connection} induced by $\mathcal{H}$. 
\end{definition}

\begin{example}
Let $\rho\colon E\to M$ be a rank $n$ vector bundle. It is canonically identified with the associated bundle corresponding to principal bundle $L(E)\to M$ and the canonical action of ${\rm GL}_n$ on $\mathbb R^n$ (example \ref{ex:vector_associated}). Associated connections in $E$ induced by principal connections in $L(E)$ are termed \emph{linear connections}. They admit the following equivalent characterizations: 
\begin{itemize}
\item[(a)] Let $\pi\colon E\to M$ be a vector bundle. Then $\pi_{1,-1}\colon J^1E\to M$ is also a vector bundle with the operations:
$$j_x^1e + j_x^1 v = j_x^1(e+v), \quad \lambda(j_x^1)v = j_x^1(\lambda v),$$
and the projection $\pi_{1,0}\colon J^1E \to E$ is a surjective morphism of vector bundles over $M$. A connection in $E$ is linear if and only if the induced section $\sigma$ of $\pi_{1,0}$ is linear on fibers.
\item[(b)]
Let $\pi:E\to M$ be a vector bundle endowed with a connection. For each $e\in E$ the vertical space is given by $\mathrm{Ver}_eE = E_{\pi(e)}$ then $\mathrm{Ver}(E) = E\times_M E$. If $\epsilon \in \Gamma(M,E)$ is a smooth section of $\pi$, clearly $\epsilon^*(\mathrm{Ver}(E))=E$ and therefore the covariant derivative operator is $\nabla :\Gamma(M,E)\to \Omega^1(M,\Gamma(M,E))$. That is given a section $\epsilon$ and a vector field $X$ the value of the covariant derivative $(\nabla_X \epsilon)(x)$ is a vector field in $E_x$ that we can see as a smooth map from $E_x$ onto itself.
An Ehreshmann connection on $E$ is linear if and only if its covariant derivative operator $\nabla$ satisfies the condition
\[
\nabla(f\epsilon +\rho) = \mathrm{d}f \otimes \epsilon +  f\nabla \epsilon + \nabla \rho, 
\]for all $\epsilon, \rho \in \Gamma(M,E)$, and all $f\in C^{\infty}(M)$.
\end{itemize}
\end{example}

\begin{example}
Let $\pi\colon E\to M$ be a vector bundle. Then there is canonical embedding ${\rm End}(E) \subset \mathfrak X_{{\rm Ver}(E)}$. Any endomorphism $A$ of $E_x$ defines a linear vector field, 
$$X_v = \left.\frac{\mathrm{d}}{\mathrm{d}t}\right|_{t=0}(v+ tA(v))$$
in $E_x$. If a connection is linear then its curvature $2$-form (Definition \ref{df:curvature}) takes values in this subbundle and therefore $R\in \Omega^2(M,{\rm End}(E))$. It corresponds with the classical notion of curvature of a linear connection as in \cite[p. 133]{kobayashi}.
\end{example}

\begin{example}\label{ex:curvatureP}
Let $\pi\colon P\to M$ be a $G$-principal bundle. For each fiber $P_x$ we have the Lie algebra ${\rm aut}(P_x)$ of $G$-invariant vector fields, which is a Lie algebra isomorphic to ${\rm Lie}(G)$. We may construct a vector bundle ${\rm gau}(P)\to M$ which is, in fact, the associated bundle to the adjoint action of $G$ on ${\rm Lie}(G)$. If $\mathcal H$ is a principal connection is principal then its curvature $2$-form takes values in this finite dimensional vector bundle
${\rm gau}(P)\subset \mathfrak X_{\rm Ver(P)}$. We have $R\in\Omega^2(M,{\rm gau}(P))$ and therefore its pullback $\pi^*(R) \in \Omega^2(P,{\rm Lie}(G))$ its a ${\rm adj}$-equivariant $2$-form. This pullback is the usual curvature form, as in \cite[p. 77]{kobayashi}. 
\end{example}


\begin{proposition}\label{pr:curvatures_as}
Let $\pi\colon P\to M$ be a $G$-principal bundle, $V$ a $G$-manifold with action $\alpha$ and
$\bar\pi\colon P\times_\alpha V = E\to M$ its associated bundle. Let us consider $\mathcal H$ a principal connection in $P$ with curvature tensor $R$ and $\bar{\mathcal H}$ its corresponding associate connection in $E$ with curvature tensor $\bar R$.
\begin{itemize}
    \item[(a)] For any $X,Y\in T_xM$ we have $\bar R(X,Y)\in {\rm aut}(E_x)$ and therefore \linebreak $\bar R\in \Omega^2(M, {\rm aut}(E))$.
    \item[(b)] Let us consider the canonical projection $\alpha_*'\colon {\rm gau}(P) \to {\rm aut}(E)$ (example \ref{ex:Aut_aut}). For any 
    $X,Y\in T_xM$ we have $\bar R(X,Y) = \mathfrak \alpha_*'R(X,Y)$.
\end{itemize}
\end{proposition}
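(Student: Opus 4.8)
The plan is to reduce both assertions to a single naturality statement: the horizontal lift operator of the associated connection $\bar{\mathcal H}$ on $E=P\times_\alpha V$ is $\mathfrak q$-related to that of the principal connection $\mathcal H$ on $P$. Once this is established, the curvature identity $\bar R(X,Y)=\mathfrak q_*(R(X,Y),0)$ follows formally from the fact that $\mathfrak q_*$ preserves Lie brackets of projectable vector fields, and Examples \ref{ex:curvatureP} and \ref{ex:Aut_aut} then identify the right-hand side with $\alpha_*'(R(X,Y))\in{\rm aut}(E_x)$, proving (b) and, as a byproduct, (a).

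First I would prove the key lemma that, for $X\in\mathfrak X(M)$ with $\mathcal H$-horizontal lift $\tilde X$ to $P$, the $\bar{\mathcal H}$-horizontal lift of $X$ to $E$ is exactly $\mathfrak q_*(\tilde X,0)$. Because $\mathcal H$ is a principal connection, $\tilde X$ is invariant under the right $G$-action on $P$, so the pair $(\tilde X,0)$ is invariant under the action $(p,v)\cdot g=(p\cdot g,g^{-1}v)$ whose orbits define $\mathfrak q$; hence $(\tilde X,0)$ is $\mathfrak q$-projectable and $\mathfrak q_*(\tilde X,0)$ is a well-defined vector field on $E$. By the very definition $\bar{\mathcal H}_{[p,n]}=\mathrm d_{(p,n)}\mathfrak q(\mathcal H_p\oplus\{0\})$ this push-forward is $\bar{\mathcal H}$-horizontal, and since $\bar\pi\circ\mathfrak q$ factors through $\pi$ it projects onto $X$. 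Uniqueness of horizontal lifts then forces the claimed equality.

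Next I would compute the curvature. The $\bar{\mathcal H}$-horizontal lifts of $X$, $Y$ and $[X,Y]$ are $\mathfrak q_*(\tilde X,0)$, $\mathfrak q_*(\tilde Y,0)$ and $\mathfrak q_*(\widetilde{[X,Y]},0)$. As push-forward of $\mathfrak q$-projectable fields is a Lie algebra morphism and $[(\tilde X,0),(\tilde Y,0)]=([\tilde X,\tilde Y],0)$, the defining formula \eqref{def:curvatureForm} gives $\bar R(X,Y)=\mathfrak q_*\big(R(X,Y),0\big)$, where $R$ is the curvature of $\mathcal H$. Using that the restriction of a curvature field to a fibre depends only on the point values $X_x,Y_x$, this restricts to $\bar R(X,Y)|_{E_x}=(\mathfrak q_x)_*\big(R(X,Y)|_{P_x},0\big)$, the restriction being legitimate because $R(X,Y)$ is vertical, hence $(R(X,Y),0)$ is tangent to $P_x\times V$.

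Finally I would close the argument. By Example \ref{ex:curvatureP}, for a principal connection $R(X,Y)|_{P_x}$ lies in ${\rm gau}(P_x)={\rm aut}_G(P_x)$, and by Example \ref{ex:Aut_aut} every $Z\in{\rm gau}(P_x)$ satisfies $\alpha_*'(Z)=(\mathfrak q_x)_*(Z,0)\in{\rm aut}(E_x)$. Combining these with the previous step yields $\bar R(X,Y)=\alpha_*'\big(R(X,Y)\big)\in{\rm aut}(E_x)$, which is precisely (b) and immediately gives (a). I expect the main obstacle to be the first step: carefully checking the $\mathfrak q$-projectability of $(\tilde X,0)$ from the $G$-invariance of $\mathcal H$ and verifying that its push-forward is genuinely the associated-connection lift; once this naturality under $\mathfrak q$ is secured, everything else is a formal bracket computation together with the two cited examples.
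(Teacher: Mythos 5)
Your proposal is correct and takes essentially the same route as the paper's proof: both observe that $(\tilde X,0)$ is $\mathfrak q$-projectable onto the $\bar{\mathcal H}$-horizontal lift $\bar X$, push the bracket formula \eqref{def:curvatureForm} through $\mathfrak q_*$ to obtain $\bar R(X,Y)=\mathfrak q_*(R(X,Y),0)$, and then invoke the identity $\alpha_*'(Z)=(\mathfrak q_x)_*(Z,0)$ from Example \ref{ex:Aut_aut} to conclude both (a) and (b). The only difference is one of detail, not substance: you carefully prove the naturality lemma (projectability of $(\tilde X,0)$ via $G$-invariance of $\tilde X$, horizontality of the push-forward, and uniqueness of horizontal lifts) that the paper asserts in a single sentence.
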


\begin{proof}
Let us consider $X$ and $Y$ vector fields in $M$, $\bar X$ and $\bar Y$ their $\bar{\mathcal H}$-horizontal lifts to $E$, $\tilde X$ and $\tilde Y$ their $\mathcal H$-horizontal lifts to $P$ and $x\in M$. Note that the vector fields $(\tilde X,0)$ and $(\tilde Y,0)$ in $P\times V$ are $\mathfrak q$-projectable and they project onto $\bar X$ and $\bar Y$, and similarly for their Lie bracket. Therefore we have:
$$\mathfrak q_*(2R(X,Y),0) = \mathfrak q_*([\tilde X,\tilde Y] - \widetilde{[X,Y]},0) = [\bar X, \bar Y] - \overline{[X,Y]} = 2\bar R(X,Y).$$
Then, for any $x\in M$ we have:
$$\alpha'_*(R(X_x,Y_x)) = (\mathfrak q_x)_*(R(X_x,Y_x),0) = \bar R(X_x,Y_x).$$
\end{proof}

\begin{proposition}\label{pr:hol_as}
Let $\pi\colon P\to M$ be a $G$-principal bundle, $V$ a $G$-manifold and
$\bar\pi\colon P\times_G V = E\to M$ its associated bundle. Let us consider $\mathcal H$ a principal connection in $P$ with curvature tensor $R$ and $\bar{\mathcal H}$ its corresponding associate connection in $E$ with curvature tensor $\bar R$. For any $\gamma\in \ell(M,x,x)$ the following hold:
\begin{itemize}
    \item[(a)]  ${\rm hol}_{\gamma}^{\mathcal H}\in {\rm Gau}(P)_x$;
    \item[(b)]  ${\rm hol}_{\gamma}^{\bar{\mathcal H}}\in {\rm Aut}(E)_x$;
    \item[(c)]  ${\rm hol}_{\gamma}^{\bar{\mathcal H}} = \alpha_*({\rm hol}_{\gamma}^{\mathcal H})$.
\end{itemize}
\end{proposition}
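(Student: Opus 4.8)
The plan is to reduce everything to a single compatibility statement: $\mathcal H$-horizontal lifts in $P$ push forward, through the identification map $\mathfrak q\colon P\times V\to E$, to $\bar{\mathcal H}$-horizontal lifts in $E$. I would first dispose of part (a). Since a principal connection is an Ehresmann connection (as recorded in the examples of Section \ref{s_EC}), the holonomy map ${\rm hol}_\gamma^{\mathcal H}\colon P_x\to P_x$ is a well-defined diffeomorphism. To see that it lies in ${\rm Gau}(P)_x\cong {\rm Aut}_G(P_x)$, I would invoke the $G$-invariance of $\mathcal H$: because $\mathrm{d}_p\gamma_g(\mathcal H_p)=\mathcal H_{p\cdot g}$, if $\tilde\gamma$ is the horizontal lift of $\gamma$ with $\tilde\gamma(0)=p$ then the translated curve $t\mapsto\tilde\gamma(t)\cdot g$ is again horizontal and projects to $\gamma$, hence is the horizontal lift with initial condition $p\cdot g$. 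Evaluating at $t=1$ yields ${\rm hol}_\gamma^{\mathcal H}(p\cdot g)={\rm hol}_\gamma^{\mathcal H}(p)\cdot g$, so the holonomy is $G$-equivariant and therefore an element of ${\rm Gau}(P)_x$.

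For parts (b) and (c), which I would prove together, the central step is to identify the $\bar{\mathcal H}$-horizontal lift of $\gamma$ in $E$ explicitly. Given $[p,v]\in E_x$, I would consider the curve $t\mapsto[\tilde\gamma(t),v]$, where $\tilde\gamma$ is the $\mathcal H$-horizontal lift of $\gamma$ with $\tilde\gamma(0)=p$. Its velocity is $\mathrm{d}_{(\tilde\gamma(t),v)}\mathfrak q(\tilde\gamma'(t),0)$, and since $\tilde\gamma'(t)\in\mathcal H_{\tilde\gamma(t)}$ this vector lies in $\bar{\mathcal H}_{[\tilde\gamma(t),v]}$ by the very definition of the associated connection; moreover the curve projects by $\bar\pi$ onto $\gamma$. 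By uniqueness of horizontal lifts it is the $\bar{\mathcal H}$-horizontal lift with initial condition $[p,v]$, and evaluating at $t=1$ gives
\[
{\rm hol}_\gamma^{\bar{\mathcal H}}([p,v])=[\tilde\gamma(1),v]=[{\rm hol}_\gamma^{\mathcal H}(p),v].
\]
Recalling from Example \ref{ex:Aut_aut} that $\alpha_*$ sends a $G$-equivariant automorphism $\sigma$ of $P_x$ to the diffeomorphism $[p,v]\mapsto[\sigma(p),v]$ of $E_x$, the right-hand side is precisely $\alpha_*({\rm hol}_\gamma^{\mathcal H})([p,v])$. This establishes (c), and since $\alpha_*$ maps ${\rm Gau}(P)_x$ into ${\rm Aut}(E)_x$, part (b) follows immediately from (a) and (c).

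Two routine points remain. First, a general path $\gamma\in\ell(M,x,x)$ is only piecewise smooth, so I would phrase the lifting arguments on each smooth segment and concatenate, using that holonomy respects concatenation; both the equivariance and the push-forward identity are preserved under composition. Second, I must check that the curve $t\mapsto[\tilde\gamma(t),v]$ is independent of the representative $(p,v)$ of the class $[p,v]$; this is automatic once (a) is in hand, since replacing $(p,v)$ by $(p\cdot g,g^{-1}v)$ and using $G$-equivariance of the holonomy gives $[\tilde\gamma(t)\cdot g,g^{-1}v]=[\tilde\gamma(t),v]$. I expect the only genuinely delicate point to be this well-definedness and uniqueness bookkeeping on the quotient $E=P\times_G V$; everything else is a direct application of the $G$-invariance of $\mathcal H$ and the defining formula for $\bar{\mathcal H}$.
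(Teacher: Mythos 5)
Your proposal is correct and follows essentially the same route as the paper's proof: part (a) via the $G$-invariance of $\mathcal H$ showing $R_g\circ\tilde\gamma$ is the horizontal lift through $p\cdot g$, and parts (b), (c) via the observation that $\mathfrak q$ carries $(\tilde\gamma,v)$ to the $\bar{\mathcal H}$-horizontal lift of $\gamma$ starting at $[\tilde\gamma(0),v]$, so that ${\rm hol}_\gamma^{\bar{\mathcal H}}([p,v])=[{\rm hol}_\gamma^{\mathcal H}(p),v]=\alpha_*({\rm hol}_\gamma^{\mathcal H})([p,v])$. Your extra bookkeeping (uniqueness of lifts, independence of the representative of $[p,v]$, concatenation over smooth segments) merely makes explicit what the paper leaves implicit.
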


\begin{proof}
(a) 
Let $\tilde\gamma$ be the $\mathcal H$-horizontal lift of $\gamma$ with starting point $p\in P_x$. Then for each $g\in G$, $R_g\circ \tilde\gamma$ is the $\mathcal H$-horizontal lift of $\gamma$ starting at $p\cdot g$. Therefore:
$${\rm hol}_{\gamma}^{\mathcal H}(p\cdot g) = R_g\circ \tilde \gamma (1) = {\rm hol}_{\gamma}^{\mathcal H}(p) \cdot g.$$
(b) and (c) follow from (a) and the fact that $\mathfrak q$ sends $(\tilde\gamma,v)$ to the $\bar{\mathcal H}$-horizontal lift of $\gamma$ with starting point $[\tilde\gamma(0),v]$.  
\end{proof}

The classical theorem of Ambrose-Singer it is usually formulated in terms of the curvature tensor seen as a basic $2$-form in $P$ with values in ${\rm Lie}(G)$ and the holonomy bundle. However, it is easily reformulated in terms of the curvature form $R$ and the with values in ${\rm gau}(P)$. 

\begin{theorem}[Ambrose-Singer \cite{ambrose1953theorem}]\label{th:AS}
${\rm Lie}({\rm RHol}_x^{\mathcal H})$ is the Lie subalgebra of ${\rm gau}(P)_x$ generated by elements ${\rm hol}_{\gamma*}^{\mathcal H}(R(X,Y))$ where $\gamma$ varies along piece-wise smooth paths with endpoint $x$; $X$, $Y$ vary along vectors with $T_{\gamma(0)}M$ and ${\rm hol}_{\gamma*}^{\mathcal H}\colon \mathfrak X(P_{\gamma(0)}) \to \mathfrak X(P_x)$ is the Lie algebra morphism (that maps ${\rm gau}(P_{\gamma(0)})$ isomorphically into ${\rm gau}(P_x)$) induced by ${\rm hol}_{\gamma}$.
\end{theorem}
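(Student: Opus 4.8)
The plan is to prove the two inclusions ${\rm Lie}({\rm RHol}_x^{\mathcal H}) \supseteq \mathfrak h$ and ${\rm Lie}({\rm RHol}_x^{\mathcal H}) \subseteq \mathfrak h$ separately, where $\mathfrak h \subseteq {\rm gau}(P)_x$ denotes the Lie subalgebra generated by the transported curvature elements ${\rm hol}_{\gamma*}^{\mathcal H}(R(\vec v_1,\vec v_2))$.

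For the inclusion $\mathfrak h \subseteq {\rm Lie}({\rm RHol}_x^{\mathcal H})$ I would rely directly on the weak form of Ambrose--Singer (Theorem \ref{th:w_AS}). Since $\mathcal H$ is a principal, hence Ehresmann, connection and $R(\vec v_1,\vec v_2) \in {\rm gau}(P)_x$ by Example \ref{ex:curvatureP}, that theorem gives $\exp(tR(\vec v_1,\vec v_2)) \in {\rm LHol}_x^{\mathcal H} \subseteq {\rm RHol}_x^{\mathcal H}$, so differentiating at $t=0$ yields $R(\vec v_1,\vec v_2)\in {\rm Lie}({\rm RHol}_x^{\mathcal H})$. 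For a generator based at $y=\gamma(0)$ I note that conjugating a contractible loop at $y$ by $\gamma$ produces a contractible loop at $x$, so ${\rm hol}_\gamma^{\mathcal H}$ carries ${\rm RHol}_y^{\mathcal H}$ onto ${\rm RHol}_x^{\mathcal H}$ and therefore ${\rm hol}_{\gamma*}^{\mathcal H}$ carries ${\rm Lie}({\rm RHol}_y^{\mathcal H})$ onto ${\rm Lie}({\rm RHol}_x^{\mathcal H})$. Hence each generator lies in ${\rm Lie}({\rm RHol}_x^{\mathcal H})$, which being a Lie algebra contains all of $\mathfrak h$.

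The reverse inclusion carries the real content, and I would run the classical holonomy-bundle argument adapted to the present setting. First I record two structural facts about $\mathfrak h$. The concatenation identity ${\rm hol}_{\delta*}^{\mathcal H}\,{\rm hol}_{\gamma*}^{\mathcal H}(R(\vec v_1,\vec v_2)) = {\rm hol}_{(\delta\star\gamma)*}^{\mathcal H}(R(\vec v_1,\vec v_2))$ shows that $\mathfrak h$ is invariant under ${\rm Adj}({\rm Hol}_x^{\mathcal H})$; this invariance lets me parallel-transport $\mathfrak h$ to a well-defined $\mathcal H$-parallel subbundle $\underline{\mathfrak h} \subseteq {\rm gau}(P)$, into which $R$ takes its values by the very definition of the generators, and (differentiating the connected group ${\rm RHol}_x^{\mathcal H}$) it also shows $\mathfrak h$ is an ideal of ${\rm Lie}({\rm RHol}_x^{\mathcal H})$. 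Next I invoke the classical reduction theorem (see \cite[Ch. II]{kobayashi}): the restricted holonomy bundle $Q\subseteq P$ through a chosen $p\in P_x$ is a principal ${\rm RHol}_x^{\mathcal H}$-bundle to which $\mathcal H$ restricts with unchanged curvature.

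Finally I would integrate. On $Q$ I form the distribution $S = \mathcal H \oplus \underline{\mathfrak h}_{\mathrm{ver}}$, the horizontal space enlarged by the vertical fields arising from $\underline{\mathfrak h}$. Involutivity is the technical crux: brackets of horizontal lifts contribute $-2R(\vec v_1,\vec v_2)\in\underline{\mathfrak h}$ by \eqref{def:curvatureForm}; brackets of two $\underline{\mathfrak h}$-verticals stay in $\underline{\mathfrak h}$ because $\mathfrak h$ is a subalgebra; and brackets of a horizontal lift with an $\underline{\mathfrak h}$-field stay in $S$ because $\underline{\mathfrak h}$ is $\mathcal H$-parallel. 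Frobenius then produces the leaf $L$ through $p$; since $Q$ is the set of endpoints of horizontal paths issuing from $p$ and horizontal directions lie in $S$, one has $Q\subseteq L$, while motion along horizontals and along $\underline{\mathfrak h}$-verticals both keep one inside $Q$, giving $L\subseteq Q$ and hence $L=Q$. Comparing dimensions, $\dim M + \dim\mathfrak h = \dim S = \dim L = \dim Q = \dim M + \dim {\rm Lie}({\rm RHol}_x^{\mathcal H})$, so $\mathfrak h = {\rm Lie}({\rm RHol}_x^{\mathcal H})$. I expect the involutivity verification, together with pinning down $Q\subseteq L\subseteq Q$ rigorously (that the leaf meets each fibre exactly in the restricted-holonomy orbit), to be the main obstacle; the reduction theorem is the single external ingredient not established earlier in the text.
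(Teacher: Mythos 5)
The paper does not actually prove Theorem \ref{th:AS}: it quotes the classical Ambrose--Singer theorem from \cite{ambrose1953theorem} (cf.\ \cite[Ch.~II]{kobayashi}) and merely observes that it can be reformulated with the curvature $R$ taking values in ${\rm gau}(P)$ rather than as a basic ${\rm Lie}(G)$-valued $2$-form on $P$. Your proposal therefore supplies a proof where the paper supplies a citation, and what you reconstruct is essentially the Kobayashi--Nomizu holonomy-bundle argument, correctly translated into the paper's invariant ${\rm gau}(P)$ language: the Adj-invariance of $\mathfrak h$ under ${\rm Hol}_x^{\mathcal H}$ (via the concatenation identity) is exactly what makes the parallel subbundle $\underline{\mathfrak h}$ well defined, the identity $[\tilde X, V_\sigma]=V_{\nabla_X\sigma}$ for the vertical field of a section $\sigma$ of ${\rm gau}(P)$ is what makes your third bracket case work, and the dimension count through Frobenius is the standard closing move. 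A nice feature of your easy inclusion is that it reuses the paper's own Theorem \ref{th:w_AS} together with Example \ref{ex:curvatureP} and the conjugation $\gamma\star\sigma\star\gamma^{-1}$ of null-homotopic loops, so that half needs no external input at all.

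One imprecision is worth flagging, though it is repairable rather than fatal. The set of endpoints of horizontal paths issuing from $p$ --- which is how you characterize $Q$ and what you need for $Q\subseteq L$ --- is the \emph{full} holonomy bundle, a principal bundle with structure group ${\rm Hol}_x^{\mathcal H}$; there is in general no canonical ${\rm RHol}_x^{\mathcal H}$-reduction of it over $M$ (one exists only over the universal cover), since transporting along a non-null-homotopic loop leaves the would-be restricted fiber $p\cdot {\rm RHol}_x^{\mathcal H}$. So you should run the argument on the full holonomy bundle. Nothing downstream changes: ${\rm RHol}_x^{\mathcal H}$ is the identity component of ${\rm Hol}_x^{\mathcal H}$ (the quotient is countable), so $\dim Q = \dim M + \dim {\rm Lie}({\rm RHol}_x^{\mathcal H})$ still holds and the final comparison $\dim \mathfrak h = \dim {\rm Lie}({\rm RHol}_x^{\mathcal H})$, combined with your first inclusion, closes the proof. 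The side remark that $\mathfrak h$ is an ideal of ${\rm Lie}({\rm RHol}_x^{\mathcal H})$ is true but never used.
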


From Propositions \ref{pr:curvatures_as} and \ref{pr:hol_as} it clearly follows that the Ambrose-Singer theorem also holds for associated connections. 

\begin{corollary}\label{cr:AS_associated}
${\rm Lie}({\rm RHol}_x^{\bar{\mathcal H}})$ is the Lie subalgebra of ${\rm aut}(E_x)$ generated by elements ${\rm hol}_{\gamma*}^{\bar{\mathcal H}}(\bar R(X,Y))$ where $\gamma$ varies along piece-wise smooth paths with endpoint $x$; $X$, $Y$ vary along vectors with $T_{\gamma(0)}M$ and ${\rm hol}_{\gamma*}^{\bar{\mathcal H}}\colon \mathfrak X(E_{\gamma(0)}) \to \mathfrak X(E_x)$ is the Lie algebra morphism (that maps ${\rm aut}(E_{\gamma(0)})$ isomorphically into ${\rm aut}(E_x)$) induced by ${\rm hol}_{\gamma}^{\bar{\mathcal H}}$.
\end{corollary}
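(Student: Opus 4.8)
The plan is to transport the principal Ambrose--Singer theorem (Theorem \ref{th:AS}) to the associated bundle $E = P\times_\alpha V$ through the canonical morphisms $\alpha_*\colon {\rm Gau}(P)\to {\rm Aut}(E)$ and its derivative $\alpha_*'\colon {\rm gau}(P)\to {\rm aut}(E)$ introduced in Example \ref{ex:Aut_aut}. Everything will reduce to the fact that $\alpha_*'$ is a fiberwise Lie algebra morphism that simultaneously intertwines curvatures (Proposition \ref{pr:curvatures_as}) and holonomy pushforwards (Proposition \ref{pr:hol_as}).

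First I would identify the restricted holonomy groups. By Proposition \ref{pr:hol_as}, for every loop $\gamma\in \ell(M,x,x)$ the holonomy ${\rm hol}_\gamma^{\mathcal H}$ lies in ${\rm Gau}(P)_x$, its image ${\rm hol}_\gamma^{\bar{\mathcal H}}$ lies in ${\rm Aut}(E)_x$, and ${\rm hol}_\gamma^{\bar{\mathcal H}} = \alpha_{*,x}({\rm hol}_\gamma^{\mathcal H})$, where $\alpha_{*,x}$ is the Lie group homomorphism obtained by restricting $\alpha_*$ to the fiber over $x$. Restricting to homotopically trivial loops this gives ${\rm RHol}_x^{\bar{\mathcal H}} = \alpha_{*,x}({\rm RHol}_x^{\mathcal H})$. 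Since ${\rm RHol}_x^{\mathcal H}$ is a connected Lie subgroup of ${\rm Gau}(P)_x$ (it is generated by the connected set of contractible loops) and $\alpha_{*,x}$ is a Lie group homomorphism whose differential at the identity is $\alpha_*'$, the Lie algebra of the image is the image of the Lie algebra, so ${\rm Lie}({\rm RHol}_x^{\bar{\mathcal H}}) = \alpha_*'({\rm Lie}({\rm RHol}_x^{\mathcal H}))$.

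Next I would push the generating set through $\alpha_*'$. Because $\alpha_*'$ is a Lie algebra morphism, it sends the subalgebra generated by a set $S$ onto the subalgebra generated by $\alpha_*'(S)$; applying this to the generators produced by Theorem \ref{th:AS} I must compute $\alpha_*'({\rm hol}_{\gamma*}^{\mathcal H}(R(X,Y)))$. Two identities combine here: Proposition \ref{pr:curvatures_as}(b) gives $\alpha_*'(R(X,Y)) = \bar R(X,Y)$, and the compatibility of the balanced construction with pushforwards --- namely that ${\rm hol}_\gamma^{\bar{\mathcal H}}$ is the map on $E$ induced by ${\rm hol}_\gamma^{\mathcal H}$ through the identification $\mathfrak q$, together with the description $\alpha_*'(X) = (\mathfrak q_x)_*(X,0)$ from Example \ref{ex:Aut_aut} --- yields the intertwining relation $\alpha_*'\circ {\rm hol}_{\gamma*}^{\mathcal H} = {\rm hol}_{\gamma*}^{\bar{\mathcal H}}\circ \alpha_*'$ between the induced Lie algebra morphisms on gauge algebras. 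Combining the two gives $\alpha_*'({\rm hol}_{\gamma*}^{\mathcal H}(R(X,Y))) = {\rm hol}_{\gamma*}^{\bar{\mathcal H}}(\bar R(X,Y))$.

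Putting these together, ${\rm Lie}({\rm RHol}_x^{\bar{\mathcal H}}) = \alpha_*'({\rm Lie}({\rm RHol}_x^{\mathcal H}))$ is precisely the subalgebra of ${\rm aut}(E_x)$ generated by the elements ${\rm hol}_{\gamma*}^{\bar{\mathcal H}}(\bar R(X,Y))$, which is the claim. I expect the only genuinely delicate point to be the intertwining relation between the two holonomy pushforward morphisms: one must check that passing to the associated bundle (that is, applying $(\mathfrak q_x)_*$) commutes with the differential of the holonomy diffeomorphism, which is exactly where the $\mathfrak q$-projectability of $(\tilde X,0)$ recorded in Example \ref{ex:Aut_aut} and the naturality in Proposition \ref{pr:hol_as} do the work. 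The passage from the image group to the image Lie algebra is routine once one observes that restricted holonomy groups are connected.
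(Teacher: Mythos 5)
Your proposal is correct and takes essentially the same route as the paper: the paper's own (two-line) proof likewise obtains ${\rm RHol}_x^{\bar{\mathcal H}} = \alpha_*({\rm RHol}_x^{\mathcal H})$ from Proposition \ref{pr:hol_as} and concludes via Proposition \ref{pr:curvatures_as}(b) and Theorem \ref{th:AS}. You merely make explicit the steps the paper leaves implicit --- the connectedness of the restricted holonomy group needed to pass from the image group to the image Lie algebra, and the intertwining $\alpha_*'\circ {\rm hol}_{\gamma*}^{\mathcal H} = {\rm hol}_{\gamma*}^{\bar{\mathcal H}}\circ \alpha_*'$ --- both of which are verified correctly.
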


\begin{proof}
By proposition \ref{pr:hol_as} we have thar ${\rm RHol}_x^{\bar{\mathcal H}} = \alpha_*({\rm RHol}_x^{\mathcal H})$. We conclude by Proposition \ref{pr:curvatures_as} (b) and Ambrose-Singer (Theorem \ref{th:AS}).
\end{proof}

\section{Transitive Lie groupoids and principal bundles}\label{s_TSLG}

There is a natural correspondence between transitive Lie groupoids and principal bundles, implicitly exposed in \cite{mackenzie1987lie}. In this section we show that this correspondence extends to a correspondence between representations of a transitive Lie groupoids and associated bundles. Moreover, this correspondence relates some class of connections -that we call infinitesimally generated connections- to associated connections. Our purpose is to show that group bundles are associated bundles and group connections are associated connections. From now on let $\mathcal G\rightrightarrows M$ be a transitive Lie groupoid.

\subsection{Equivalence of transitive Lie grupoids and principal bundles.} \label{eqv_cat}
It is well known that for any $x\in M$ the target projection $t\colon \mathcal G_{x\bullet}\to M$ is a principal bundle with structure group $\mathcal G_{xx}$ acting by right composition.  Moreover, all these principal bundles are isomorphic, any element $g\in \mathcal G_{xy}$ induces a pair of isomorphisms:
$$\varphi_g\colon \mathcal G_{yy}\to \mathcal G_{xx}, \quad  h \mapsto g^{-1}hg,$$
$$R_g\colon \mathcal G_{y\bullet} \to \mathcal G_{x\bullet}, \quad \sigma \mapsto \sigma g,$$
where $\varphi_g$ is a Lie group isomorphism a $R_g$ is a principal bundle isomorphism defined over $\varphi_g$ in the sense that $R_g(p\circ h) = R_g(p) \circ \varphi_g(h)$. Therefore, the fibers of $s$ give a family of isomorphic principal bundles modeled over isomorphic groups. On the other hand, let us consider the construction given in Example \ref{ex:IsoP} that assigns to each principal bundle $P\to M$ its groupoid of isomorphisms ${\rm Iso}(P)\rightrightarrows M$. These assignations are reversible in the following sense:

\begin{itemize}
    \item[(a)] Let $\pi\colon P\to M$ a principal bundle modeled over $G$. Each point $p\in P$ with $x = \pi(p)$ induces isomorphisms of groups and bundles:
    $$\phi_p\colon  G \to {\rm Aut}_G(P_x) = {\rm Iso}(P)_{xx}, \quad   g \mapsto [p,pg]$$
    $$R_p\colon P \to {\rm Iso}(P)_{x\bullet}, \quad q \mapsto [p,q]$$
    so that $R_p(q\cdot g) = R_p(q)\phi_p(g)$ for all $q\in P$ and $g\in G$.
    \item[(b)] Let $\mathcal G\rightrightarrows M$ be a transitive Lie groupoid. Each element  $g\in \mathcal G$ with source $s(g) = x$ induces a Lie groupoid isomorphism,
    $$\mathcal G \to {\rm Iso}(\mathcal G_{x\bullet}), \quad h \mapsto L_h|_{\mathcal G_{xs(h)}}$$
    where $L_h\colon \mathcal G_{\bullet s(h)} \to \mathcal G_{\bullet t(h)}$, $\sigma \mapsto h\sigma $ is the left translation. 
    \end{itemize}

The above correspondence can be seen an equivalence of categories. We may set the following categories:
\begin{itemize}
    \item[(i)] ${\bf Ppal}$ is the category of principal bundles over $M$. Objects are $4$-tuples $(P,G,\pi,\alpha)$ where $\pi\colon P\to M$ is a fiber bundle, $G$ is a Lie group, and $\alpha$ is a right action of $G$ in $P$ that gives the latter an structure of principal bundles. A morphism from $(P,G,\pi,\alpha)$ to $(P',G',\pi',\alpha')$ is a
    from are pair of maps,
    $$f\colon P\to P', \quad \sigma \colon G,\to G'$$
    such $f$ is a morhphisms of fiber bundles over $M$, $\sigma$ is a Lie group morphism and $f(p\cdot g) = f(p)\cdot \sigma(g)$.
    \item[(ii)] ${\bf Grpd}$ is the category of transitive Lie groupoids over $M$. Objects are transitive Lie groupoids $\mathcal G\rightrightarrows M$ and morphisms are morphisms of Lie groupoids that induce the identity on $M$.
\end{itemize}

In this way we may set the natural transformations,
$${\rm Iso}\colon {\bf Ppal} \leadsto {\bf Grpd}, \quad P\to M \,\, \leadsto \,\, {\rm Iso}(P)\rightrightarrows M,$$
$${\rm sou}_x\colon {\bf Grpd} \leadsto {\bf Ppal}, \quad \mathcal G\rightrightarrows M \,\, \leadsto \,\, 
(\mathcal G_{x\bullet},\mathcal G_{xx}, t|_{\mathcal G_{x\bullet}}, \circ)$$
for each $x\in M$. The isomorphisms (a) and (b) given above show that ${\rm Iso}$ and ${\rm sou}_x$ are equivalences of categories. 
 
\subsection{Representations of a groupoid and associated bundles}


\begin{definition}
A $\mathcal G$-bundle is a bundle $\pi \colon E\to M$ endowed with a Lie groupoid action, that is, a smooth map
$$\mathcal G\,_s\hspace{-1mm}\times E\to E, \quad (g,p)\mapsto g\cdot p,$$
satisfying the following properties:
\begin{itemize}
    \item[(a)] $\mathfrak e_x\cdot p = p$;
    \item[(b)] $(gh)\cdot p = g\cdot(h\cdot p)$;
    \item[(c)] $\pi(g\cdot p) = t(g)$.
\end{itemize}
\end{definition}

Note that, if $E$ is a $\mathcal G$-bundle, then each fiber $E_x$ is a $\mathcal G_{xx}$-manifold. We say that the  $\mathcal G$-bundle $E$ is effective if the fibers $E_x$ are effective $\mathcal G_{xx}$-manifolds. 
If a $\mathcal G$-bundle is not effective, it is possible to factorize the action through a quotient groupoid $\bar{\mathcal G}\rightrightarrows M$ so that $E$ is realized as an effective $\bar{\mathcal G}$-bundle.

\begin{example}
If $P\to M$ is a principal bundle, then $P$ is an ${\rm Iso}(P)$-bundle with the natural action of ${\rm Iso(P)}$ on $P$.
\end{example}

\begin{example}
If $P\to M$ is a principal bundle with structure group $G$ and $V$ is a $G$-manifold then the associated bundle
$E = P\times_G V$ is a ${\rm Iso}(P)$-bundle with the action: 
$${\rm Iso}(P) \,_s\hspace{-1mm}\times E \to E \quad (\sigma,[p,v]) \mapsto [\sigma(p),v].$$
This ${\rm Iso}(P)$-bundle is effective if and only if $V$ is an effective $G$-manifold. 
\end{example}

Reciprocally, let us see that any $\mathcal G$-bundle can be realized as an associated bundle.

\begin{proposition}\label{pr:associated_iso}
Let $E\to M$ be a $\mathcal G$-bundle. For all $x\in M$ we have a isomorphism
$$\mathcal G_{x\bullet}\times_{\mathcal G_{xx}} E_x \xrightarrow{\sim} E$$
of $\mathcal G$-bundles, where the $\mathcal G$-bundle structure of $\mathcal G_{x\bullet}\times_{\mathcal G_{xx}} E_x$ is given by the canonical isomorphism $\mathcal G \simeq {\rm Iso}(\mathcal G_{x\bullet})$.
\end{proposition}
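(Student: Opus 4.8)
The plan is to write down the tautological evaluation map and check it satisfies every requirement. Define
$$\Phi\colon \mathcal G_{x\bullet}\times_{\mathcal G_{xx}} E_x \to E, \qquad [g,p]\mapsto g\cdot p.$$
The first thing I would verify is that $\Phi$ is well defined. A second representative of the class $[g,p]$ has the form $(gh,h^{-1}\cdot p)$ with $h\in\mathcal G_{xx}$, and property (b) of the groupoid action together with the groupoid axioms gives $(gh)\cdot(h^{-1}\cdot p)=(ghh^{-1})\cdot p=(g\,\mathfrak e_x)\cdot p=g\cdot p$, so $\Phi$ is constant on $\mathcal G_{xx}$-orbits. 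Smoothness is then automatic: the map $(g,p)\mapsto g\cdot p$ is a restriction of the (smooth) groupoid action, and the quotient map $\mathcal G_{x\bullet}\times E_x\to\mathcal G_{x\bullet}\times_{\mathcal G_{xx}}E_x$ is a submersion, so $\Phi$ descends smoothly.

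Next I would confirm $\Phi$ is a morphism of $\mathcal G$-bundles. It covers the identity of $M$ because property (c) gives $\pi(g\cdot p)=t(g)$, which is precisely the image of $[g,p]$ under the projection of the associated bundle. For equivariance I first make the $\mathcal G$-action on the associated bundle explicit: under the canonical identification $\mathcal G\simeq{\rm Iso}(\mathcal G_{x\bullet})$ an element $h$ corresponds to the left translation $L_h\colon g\mapsto hg$, so the associated-bundle action is $h\cdot[g,p]=[hg,p]$ whenever $s(h)=t(g)$. Then $\Phi(h\cdot[g,p])=(hg)\cdot p=h\cdot(g\cdot p)=h\cdot\Phi([g,p])$ by property (b). I expect this to be the most delicate point, not because the algebra is hard but because one must trace the action carefully through the identification $\mathcal G\simeq{\rm Iso}(\mathcal G_{x\bullet})$ to see that it collapses to plain left translation; once that is established, equivariance is a one-line consequence of associativity.

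Finally I would prove $\Phi$ is a diffeomorphism by a local argument. Since $t\colon\mathcal G_{x\bullet}\to M$ is a principal $\mathcal G_{xx}$-bundle, every $y_0\in M$ has a neighborhood $U$ carrying a smooth section $\varsigma\colon U\to\mathcal G_{x\bullet}$ with $\varsigma(y)\in\mathcal G_{xy}$. This section trivializes the associated bundle by $(y,p)\mapsto[\varsigma(y),p]$, and post-composing with $\Phi$ yields $(y,p)\mapsto\varsigma(y)\cdot p$. Its candidate inverse $q\mapsto(\pi(q),\varsigma(\pi(q))^{-1}\cdot q)$ is smooth because inversion in $\mathcal G$ and the action are smooth, and properties (a) and (b) show the two maps are mutually inverse (using $\varsigma(y)\varsigma(y)^{-1}=\mathfrak e_y$ and $\varsigma(y)^{-1}\varsigma(y)=\mathfrak e_x$). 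Hence $\Phi$ restricts to a diffeomorphism over each such $U$, so it is a global isomorphism of $\mathcal G$-bundles.
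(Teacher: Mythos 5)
Your proposal is correct and follows essentially the same route as the paper, whose entire proof consists of exhibiting the same evaluation map $[g,e]\mapsto g\cdot e$. The well-definedness, equivariance through the identification $\mathcal G\simeq{\rm Iso}(\mathcal G_{x\bullet})$, and the local-section argument for invertibility that you spell out are precisely the routine verifications the paper leaves implicit.
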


\begin{proof}
The isomorphism of the statement is given explicitly by $[g,e] \mapsto g\cdot e$.
\end{proof}

\subsection{Group bundles as associated bundles}

Let $\mathfrak{G}\to M$ be a locally trivial group bundle. From the local triviality is easy to check that the transitive groupoid ${\rm Iso}_{gr}(\mathfrak{G})\rightrightarrows M$ whose fiber on $(x,y)$ is the set of Lie group isomorphisms from $\mathfrak{G}_x$ to $\mathfrak{G}_y$ admits a canonical structure of Lie groupoid that realizes $\mathfrak{G}$ as an ${\rm Iso}_{gr}(\mathfrak{G})$-bundle. 

\begin{theorem}\label{th:group_associated}
Let $\mathfrak{G}\to M$ be a locally trivial group bundle, $x\in M$, and $G = {\rm Aut}_{gr}(\mathfrak{G}_x)$ the Lie group of Lie group automorphisms of $\mathfrak{G}_x$. Then there is a principal bundle $P\to M$ modeled over $G$ such that:
\begin{itemize}
    \item[(a)] The grupoid ${\rm Iso}(P)$ is canonically isomorphic to ${\rm Iso}_{gr}(\mathfrak{G})$.
    \item[(b)] $H$ is isomorphic to $P\times_G H_x$ as ${\rm Iso}_{gr}(\mathfrak{G})$-bundles. 
\end{itemize}
\end{theorem}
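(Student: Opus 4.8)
The plan is to construct $P$ directly from the isomorphism groupoid, using the equivalence of categories of Section~\ref{eqv_cat} rather than building a cocycle by hand. Since $\mathfrak{G}\to M$ is locally trivial, the paragraph preceding the theorem grants that $\mathcal G := {\rm Iso}_{gr}(\mathfrak{G})\rightrightarrows M$ is a transitive Lie groupoid and that $\mathfrak{G}$ is an $\mathcal G$-bundle. Its stabilizer at $x$ is $\mathcal G_{xx} = {\rm Aut}_{gr}(\mathfrak{G}_x) = G$, which is a Lie group by the standing finiteness hypothesis of the introduction. The functor ${\rm sou}_x$ then produces the desired principal bundle, so I would set
$$P := {\rm sou}_x(\mathcal G) = \mathcal G_{x\bullet} = s^{-1}(x),$$
the source fiber over $x$, with target projection $t\colon P\to M$ sending an isomorphism to its codomain base point, and with $G$ acting on the right by precomposition. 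By the general fact recalled at the start of Section~\ref{eqv_cat}, this is a principal $G$-bundle over $M$.

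For part~(a) I would invoke the canonical isomorphism labelled (b) in Section~\ref{eqv_cat}: for a transitive groupoid $\mathcal G$ the assignment $h \mapsto L_h|_{\mathcal G_{xs(h)}}$ is a Lie groupoid isomorphism $\mathcal G \xrightarrow{\sim} {\rm Iso}(\mathcal G_{x\bullet}) = {\rm Iso}(P)$, canonical once the base point $x$ is fixed. Applied to $\mathcal G = {\rm Iso}_{gr}(\mathfrak{G})$ this is exactly the asserted canonical isomorphism ${\rm Iso}(P)\cong {\rm Iso}_{gr}(\mathfrak{G})$. For part~(b), since $\mathfrak{G}$ is an $\mathcal G$-bundle, Proposition~\ref{pr:associated_iso} yields an isomorphism of $\mathcal G$-bundles
$$\mathfrak{G} \xrightarrow{\sim} \mathcal G_{x\bullet}\times_{\mathcal G_{xx}} \mathfrak{G}_x = P\times_G \mathfrak{G}_x, \quad [g,e]\mapsto g\cdot e,$$
which is precisely the claim, once $P\times_G \mathfrak{G}_x$ is regarded as an ${\rm Iso}_{gr}(\mathfrak{G})$-bundle through the identification of part~(a).

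The substantive inputs (that $\mathcal G$ is a Lie groupoid, that $\mathfrak{G}$ is one of its bundles, and smoothness of all structure maps) are exactly the content granted by local triviality before the theorem, so what remains is bookkeeping. The one point I would treat with genuine care — and which I expect to be the main obstacle, such as it is — is the \emph{compatibility} of the two identifications: that the $\mathcal G$-action on $P\times_G \mathfrak{G}_x$ used in Proposition~\ref{pr:associated_iso}, defined through $\mathcal G\simeq {\rm Iso}(P)$, agrees with the native action of ${\rm Iso}_{gr}(\mathfrak{G})$ on $\mathfrak{G}$ under~(a). Concretely, for $\phi\in {\rm Iso}_{gr}(\mathfrak{G})$ and a class $[g,e]$ I would check that the associated-bundle prescription $\phi\cdot[g,e] = [\phi\circ g, e]$ matches $\phi\cdot(g\cdot e) = (\phi\circ g)(e)$ under the map $[g,e]\mapsto g\cdot e$; this is tautological once one observes that groupoid multiplication $\phi g$ in ${\rm Iso}_{gr}(\mathfrak{G})$ \emph{is} composition of isomorphisms, so the only real work is aligning the source/target and left/right conventions inherited from Section~\ref{eqv_cat}, in particular the equivariance relation $R_p(q\cdot g) = R_p(q)\phi_p(g)$.
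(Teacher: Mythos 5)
Your proposal is correct and coincides with the paper's own proof: the paper likewise takes $P = {\rm Iso}_{gr}(\mathfrak G)_{x\bullet}$ and deduces (a) from isomorphism (b) of Subsection \ref{eqv_cat} and (b) from Proposition \ref{pr:associated_iso}. Your added compatibility check of the two identifications is a sound (and tautologically verified) elaboration the paper leaves implicit.
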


\begin{proof}
Let us take $P = {\rm Iso}_{gr}(\mathfrak G)_{x\bullet}$. Then, statement (a) is a particular case of the isomorphism (b) given in Subsection \ref{eqv_cat}, and statement (b) is a particular case of Proposition \ref{pr:associated_iso}.
\end{proof}

\subsection{Infinitesimally generated $\mathcal G$-connections}

Let us consider $\pi\colon E\to M$ a fiber bundle. There is a canonical infinite dimensional Lie algebroid $\mathfrak{iso}(E)\to M$ such that $\Gamma(\mathfrak{iso}(E)) = \mathfrak X_{\pi}(E)$ is the Lie algebra of $\pi$-projectable vector fields in $E$. We can see it as the union of its finite rank vector subbundles, that are spanned by finite families of $\pi$-projectable vector fields. Thus, the bracket operation in $\Gamma(\mathfrak{iso}(E))$ is the Lie bracket of vector fields and the anchor is the projection onto $TM$. 

\begin{definition}
Let $A\to M$ be a Lie algebroid. 
An infinitesimal action of $A$ in $E$ is a Lie algebroid morphism $\varphi\colon A \to \mathfrak{iso}(E)$.
\end{definition}

Let us consider $\pi\colon E\to M$ a $\mathcal G$-bundle with action $\alpha$. By derivation of $\alpha$ we obtain an infinitesimal action $\alpha'$ of ${\rm Lie}(\mathcal G)$ in $E$,
$$\alpha'\colon {\rm Lie}(\mathcal G) \to \mathfrak{iso}(E),\quad \vec v \mapsto \vec v^*, \quad  \vec v^*(p) = \mathrm{d}\alpha(\mathrm{d} \mathfrak i(\vec v), \vec 0_p)$$ 
Note that if the action of $\mathcal G$ in $E$ is effective then the infinitesimal action $\alpha'$ is injective. \\

Let $\mathcal H$ be a connection in $E$. Note that the $\mathcal H$-horizontal lift $\tilde X$ of any vector field $X$ in $M$ is a $\pi$-projectable vector field in $E$, and therefore a section of $\mathfrak{iso}(E)$.
$${\rm lift}^{\mathcal H}\colon \mathfrak X(M) \to \Gamma(\mathfrak{iso}(E)), \quad X\mapsto {\rm lift}^{\mathcal H}(X) = \tilde X.$$

\begin{definition}
Let $\mathcal H$ be an Ehreshmann connection in a $\mathcal G$-bundle $E$. We say that $\mathcal H$ is an infinitesimally generated $\mathcal G$-connection if it satisfies any of the following equivalent conditions:
\begin{itemize}
    \item[(a)] ${\rm lift}^{\mathcal H}(\mathfrak X(M)) \subset \alpha'(\Gamma({\rm Lie}(\mathcal G)))$.
    \item[(b)] For any piecewise smooth path $\gamma$ with endpoints $x$ and $y$ 
    there is $g\in \mathcal G_{xy}$ such that ${\rm hol}_\gamma^{\mathcal H}\colon v \to g\cdot v$ for all $v\in E_x$. 
\end{itemize}
\end{definition}

Lets us check the equivalence between the two conditions. First, assume that $\gamma$ is an integral curve of a vector field $X$ in $M$ then there is a left invariant vector field $A$ in $\mathcal G$ such that $\alpha'(A) = \tilde X$. Then we have ${\rm hol}_\gamma^{\mathcal H}\colon v \mapsto (\exp(A)\mathfrak e_{\gamma(0)})^{-1}\cdot v$. This proves $(a)\Rightarrow(b)$. Let us now assume that the action of $\mathcal G$ on $E$ is effective, otherwise we take the quotient by the kernel of the action. Let us take a complete vector field $X$  in $M$. Then, by $(b)$ we have that 
${\rm exp}(t\tilde X)|_{E_x}\colon v \mapsto v\cdot g(x,t)$
for certain $g(x,t)\in G$. We define $A = \left.\frac{d}{dt}\right|_{t=0} g(x,t)^{-1}$ which is a section of ${\rm Lie}(\mathcal G)$ such that $\alpha'(A) = \tilde X$.

\begin{theorem}\label{th:GCC}
Let $E\to M$ be a $\mathcal G$-bundle. Fix $x\in M$, $P = \mathcal G_{x\bullet}$, $G = \mathcal G_{xx}$, so that ${\rm Iso}(P)$ is identified with $\mathcal G$ and $E$ is identified with the associated bundle $P\times_G E_x$. Let $\mathcal H$ be an Ehreshmann connection in $E$. The following are equivalent:
\begin{itemize}
    \item[(a)] $\mathcal H$ is an infinitesimally generated $\mathcal G$-connection.
    \item[(b)] There is a $G$-invariant connection $\mathcal D$ in $P$ such that $\mathcal H$ is the associated connection induced by $\mathcal D$ in $E$.
\end{itemize}
Moreover, if $E$ is an effective $\mathcal G$-bundle the connection $\mathcal D$ of statement (b) is unique. 
\end{theorem}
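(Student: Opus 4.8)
The plan is to translate both conditions into statements about $G$-invariant vector fields on the principal bundle $P=\mathcal G_{x\bullet}$, exploiting that $P$ is itself an \emph{effective} $\mathcal G$-bundle under left translation (the left $\mathcal G_{xx}$-action on the fiber $P_x=\mathcal G_{xx}$ is free). Writing $\alpha'_P\colon {\rm Lie}(\mathcal G)\to \mathfrak{iso}(P)$ for the corresponding infinitesimal action, the first step is to check that $\alpha'_P$ identifies $\Gamma({\rm Lie}(\mathcal G))$ with the space of $G$-invariant ($\mathcal G_{xx}$-invariant) vector fields on $P$: the image lands there because left and right translations in $\mathcal G$ commute, it is injective because $P$ is effective, and it is a rank count away from being onto; moreover it covers the anchor $\rho=\mathrm d s$, the inversion $\mathfrak i$ in the definition of the infinitesimal action being exactly what makes $\alpha'_P(\vec v)$ project to $\rho(\vec v)$ rather than to $0$. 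Under this identification a principal connection $\mathcal D$ on $P$ is the same datum as a $\mathcal C^\infty(M)$-linear operator $L\colon \mathfrak X(M)\to \Gamma({\rm Lie}(\mathcal G))$ splitting the anchor, via $\alpha'_P(L(X))=\widetilde X^{\mathcal D}$. The second ingredient, an immediate consequence of the explicit action $g\cdot[p,v]=[g\cdot p,v]$ on $E\cong P\times_G E_x$ (Proposition~\ref{pr:associated_iso}) together with the formula $\alpha_*'(X)=(\mathfrak q_x)_*(X,0)$ of Example~\ref{ex:Aut_aut}, is the compatibility $\alpha'(\vec v)=\mathfrak q_*(\alpha'_P(\vec v),0)$; combined with the definition of the associated connection this yields, for every principal connection $\mathcal D$,
$${\rm lift}^{\bar{\mathcal D}}(X)=\mathfrak q_*(\widetilde X^{\mathcal D},0)=\alpha'(L(X)).$$

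With this dictionary in place, $(b)\Rightarrow(a)$ is immediate: if $\mathcal H=\bar{\mathcal D}$ then ${\rm lift}^{\mathcal H}(X)=\alpha'(L(X))\in \alpha'(\Gamma({\rm Lie}(\mathcal G)))$ for all $X$, which is precisely condition~(a) in the definition of an infinitesimally generated $\mathcal G$-connection.

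For $(a)\Rightarrow(b)$ the task is to produce the operator $L$. Condition~(a) says the tensorial map $\ell\colon TM\to \mathfrak{iso}(E)$ underlying the $\mathcal C^\infty(M)$-linear operator ${\rm lift}^{\mathcal H}$ takes values in the image subbundle $\mathcal A:=\alpha'({\rm Lie}(\mathcal G))$. Since $\alpha'\colon {\rm Lie}(\mathcal G)\to \mathcal A$ is a surjective morphism of vector bundles over $M$, it admits a $\mathcal C^\infty(M)$-linear section $\beta$; I would set $L:=\beta\circ\ell$. Then $\alpha'\circ L=\ell={\rm lift}^{\mathcal H}$, and because $\alpha'$ is a Lie algebroid morphism we have $\rho=\rho_{\mathfrak{iso}(E)}\circ\alpha'$, whence $\rho\circ L=\rho_{\mathfrak{iso}(E)}\circ\ell={\rm id}_{TM}$ (the horizontal lift of $X$ always projects back to $X$). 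Thus $L$ splits the anchor and defines a principal connection $\mathcal D$ with $\widetilde X^{\mathcal D}=\alpha'_P(L(X))$, and the displayed identity gives ${\rm lift}^{\bar{\mathcal D}}(X)=\alpha'(L(X))={\rm lift}^{\mathcal H}(X)$; since the horizontal lift operator determines the connection, $\bar{\mathcal D}=\mathcal H$. Uniqueness when $E$ is effective is then formal: effectiveness makes $\alpha'$ injective, so $L=(\alpha')^{-1}\circ{\rm lift}^{\mathcal H}$ is forced, and as $\alpha'_P$ is always injective the distribution $\mathcal D$ is determined by $L$.

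I expect the main obstacle to be that first step: pinning down the isomorphism $\alpha'_P\colon {\rm Lie}(\mathcal G)\xrightarrow{\sim}\{G\text{-invariant vector fields on }P\}$ together with the push-forward compatibility $\alpha'(\vec v)=\mathfrak q_*(\alpha'_P(\vec v),0)$, since this is where the left/right conventions and the inversion $\mathfrak i$ in the definition of the infinitesimal action must be handled carefully, and it is what forces the anchor to come out as $\mathrm d s$ rather than $0$. A secondary point is that in the non-effective case one must know $\mathcal A=\alpha'({\rm Lie}(\mathcal G))$ is a genuine constant-rank subbundle so that the section $\beta$ exists; this is guaranteed by first passing to the effective quotient groupoid $\bar{\mathcal G}$ (as noted after the definition of $\mathcal G$-bundle), for which $\ker\alpha'$ has constant rank.
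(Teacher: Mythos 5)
Your proposal is correct, but it takes a genuinely different route from the paper's. The paper argues at the finite level, through condition (b) of the definition of infinitesimally generated $\mathcal G$-connection: for $(b)\Rightarrow(a)$ it notes that the holonomy of the associated connection is given by elements of ${\rm Iso}(P)\simeq \mathcal G$ (with uniqueness of the holonomy element, hence of $\mathcal D$, coming from effectiveness), and for $(a)\Rightarrow(b)$ it first passes to the effective quotient $\bar{\mathcal G}$, writes the flow of a horizontal lift as ${\rm exp}(t\tilde X)|_{E_y}\colon v\mapsto h(y,t)\cdot v$ with $h(y,t)\in \mathcal G_{y\bullet}$, defines the lift to $P$ by $\hat X(g)=\frac{d}{dt}h(y,t)g$, and then invokes an unproven ``well known'' lifting of invariant connections from the quotient bundle $\bar P$ back to $P$. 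You instead work entirely at the infinitesimal level, through condition (a), via the classical identification of ${\rm Lie}(\mathcal G)\simeq {\rm Lie}({\rm Iso}(P))$ with the Atiyah algebroid $TP/G$ and the dictionary ``principal connection $=$ $\mathcal C^\infty(M)$-linear splitting of the anchor.'' Your key steps check out: $\alpha'_P(\vec v)(p)=\mathrm{d}R_p(\mathrm{d}\mathfrak i(\vec v))$ is pointwise injective since $R_p$ and $\mathfrak i$ are diffeomorphisms, and the rank count ${\rm rank}\,{\rm Lie}(\mathcal G)=\dim M+\dim G={\rm rank}(TP/G)$ makes $\alpha'_P$ an isomorphism; the compatibility $\alpha'(\vec v)=\mathfrak q_*(\alpha'_P(\vec v),0)$ follows as you say from Proposition \ref{pr:associated_iso} and Example \ref{ex:Aut_aut}. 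On your one flagged soft spot: the constant rank of $\mathcal A=\alpha'({\rm Lie}(\mathcal G))$ also follows directly from transitivity, since any $g\in\mathcal G_{xy}$ conjugates the kernel of the $\mathcal G_{xx}$-action on $E_x$ to that of the $\mathcal G_{yy}$-action on $E_y$, so you need not even invoke the quotient groupoid; and your appeal to $\bar{\mathcal G}$ is in any case on the same footing as the paper's own use of it. What each approach buys: the paper's flow argument is concrete and exploits the Ehresmann property directly, but leans on the unproven connection-lifting lemma in the non-effective case; your section $\beta$ of $\alpha'$ over its image replaces that lemma by fiberwise linear algebra (a choice of complement to $\ker\alpha'$), your $(b)\Rightarrow(a)$ verifies condition (a) directly without holonomy bookkeeping, and uniqueness in the effective case drops out formally from injectivity of $\alpha'$ -- at the price of having to establish the Atiyah-algebroid identification, which the paper never makes explicit but which you correctly isolate and verify.
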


\begin{proof}
Let $\mathcal D$ be a $G$-invariant connection in $P$. Let us consider the identification map,
$$\mathfrak q\colon P\times E_x \to E, \quad (g,v)\mapsto g\cdot v$$
the associated connection to $\mathcal D$ is $\mathfrak q_{*}(\mathcal D,\vec 0)$ which is an infinitesimally generated $\mathcal G$-connection, since its holonomy is given by elements of ${\rm Iso}(P)\simeq \mathcal G$. Note also, that the uniqueness of the holonomy as an element of ${\rm Iso}(P)$ comes from the effectiveness of the action of $G$ in $E_x$. This proves $(b)\Rightarrow(a)$ and the last statement.  

Let us prove $(a)\Rightarrow(b)$.  If the action of $\mathcal G$ in $E$ is not effective we may replace $\mathcal G$ by a quotient $\bar{\mathcal G}$ so that any infinitesimally generated $\mathcal G$-connection is also an infinitesimally generated $\bar{\mathcal G}$-connection. In such case we obtain a principal bundle $\bar P$ which is a quotient of $P$ modeled over a quotient $\bar G$ of $G$. It is well known that any $\bar G$-invariant connection in $P$ lifts to a $G$-invariant connection in $P$ and therefore, we may assume, without loss of generality, that the action is effective. 
Accordingly, we have that for each complete vector field $X$ in $M$ the exponential of $\tilde X$ restricted to a fiber $E_y$ of $E$ is given by an element of $\mathcal G_{y\bullet}$,
$${\exp}(tx)|_{E_y} \colon v \mapsto h(y,t)\cdot v.$$
Let us define for $g\in \mathcal G_{xy}$ the lift to $P$ as $\hat X(g)= \frac{d}{dt} h(y,t)g$ so that this lift operator $X\to \hat X$ defines a $G$-invariant connection $\mathcal D$ in $P$ such that its associated connection
is $\mathcal H$.
\end{proof}

As group connections are infinitesimally generated by its groupoid of isomorphisms, we may apply the above result to them. 
Let us consider a locally trivial group bundle, $\mathfrak{G}\to M$, $x\in M$, $G$, $P$ as in the statement of Theorem \ref{th:group_associated} and $\mathcal H$ a group connection. By direct application of the above theorem we obtain the following result.

\begin{corollary}\label{cor:G_vs_AutG}
There is a unique $G$-invariant connection $\mathcal D$ in $P\to M$ such that $\mathcal H$ is the associated connection of $\mathcal D$.
\end{corollary}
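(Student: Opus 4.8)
The plan is to realize this statement as a direct instance of Theorem \ref{th:GCC}, applied to the groupoid $\mathcal G = {\rm Iso}_{gr}(\mathfrak G)$ acting on $E = \mathfrak G$. First I would recall that, as observed just before Theorem \ref{th:group_associated}, local triviality of $\mathfrak G$ endows ${\rm Iso}_{gr}(\mathfrak G)\rightrightarrows M$ with the structure of a transitive Lie groupoid and realizes $\mathfrak G$ as an ${\rm Iso}_{gr}(\mathfrak G)$-bundle. With the choices $P = \mathcal G_{x\bullet} = {\rm Iso}_{gr}(\mathfrak G)_{x\bullet}$ and $G = \mathcal G_{xx} = {\rm Aut}_{gr}(\mathfrak G_x)$ of Theorem \ref{th:group_associated}, the identifications ${\rm Iso}(P)\simeq \mathcal G$ and $\mathfrak G\simeq P\times_G \mathfrak G_x$ hold, so the hypotheses of Theorem \ref{th:GCC} are in place with $E = \mathfrak G$.

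The key step is to verify that the group connection $\mathcal H$ is an infinitesimally generated $\mathcal G$-connection, i.e. that it satisfies condition (b) of that definition. This is exactly where the group-connection hypothesis enters: by the Proposition preceding Theorem \ref{th:LT}, for any path $\gamma$ joining $x$ to $y$ the holonomy ${\rm hol}_\gamma^{\mathcal H}\colon \mathfrak G_x\to \mathfrak G_y$ is a Lie group isomorphism, hence an element of ${\rm Iso}_{gr}(\mathfrak G)_{xy} = \mathcal G_{xy}$. Since the tautological action of $\mathcal G$ on $\mathfrak G$ makes such an element act on $v\in \mathfrak G_x$ precisely by $v\mapsto {\rm hol}_\gamma^{\mathcal H}(v)$, condition (b) of the definition holds with $g = {\rm hol}_\gamma^{\mathcal H}$. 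Thus $\mathcal H$ is an infinitesimally generated $\mathcal G$-connection.

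With (a) of Theorem \ref{th:GCC} established, the existence half of that theorem yields a $G$-invariant connection $\mathcal D$ in $P$ whose associated connection is $\mathcal H$. For uniqueness I would invoke the last sentence of Theorem \ref{th:GCC}, which applies once the $\mathcal G$-bundle $\mathfrak G$ is known to be effective. The main point to check is precisely this effectiveness: the fiber $\mathfrak G_x$ is an effective $\mathcal G_{xx}$-manifold because $\mathcal G_{xx} = {\rm Aut}_{gr}(\mathfrak G_x)$ acts on $\mathfrak G_x$ by its tautological action as automorphisms, and an automorphism fixing every element of $\mathfrak G_x$ is the identity automorphism. Hence the action of $\mathcal G$ on $\mathfrak G$ is effective and $\mathcal D$ is unique.

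I expect the only genuine subtlety to be this effectiveness verification, which is what guarantees uniqueness; the remaining steps are bookkeeping that matches the hypotheses of Theorem \ref{th:GCC} to the group-bundle situation through the identifications provided by Theorem \ref{th:group_associated}.
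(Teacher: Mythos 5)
Your proposal is correct and follows exactly the paper's route: the paper deduces the corollary as a direct application of Theorem \ref{th:GCC} after observing that group connections are infinitesimally generated by ${\rm Iso}_{gr}(\mathfrak G)$, which is precisely your verification of condition (b) via the holonomy isomorphisms of the Proposition preceding Theorem \ref{th:LT}. You merely spell out what the paper leaves implicit --- including the effectiveness of the tautological ${\rm Aut}_{gr}(\mathfrak G_x)$-action needed for uniqueness --- and these details are accurate.
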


And thus, as a direct consequence of the above and Corollary \ref{cr:AS_associated} we obtain an Ambrose-Singer theorem for group connections. 

\begin{corollary}\label{cor:AS_GC}
Let $\mathcal H$ be a group connection in $\mathfrak{G}\to M$. Then ${\rm Lie}({\rm RHol}_x^{\mathcal H})$ is the Lie subalgebra of $\mathfrak X(\mathfrak{G}_x)$ generated by elements ${\rm hol}_{\gamma*}^{\mathcal H}(R(X,Y))$ where $\gamma$ varies along piece-wise smooth paths with endpoint $x$; $X$, $Y$ vary along vectors with $T_{\gamma(0)}M$ and ${\rm hol}_{\gamma*}^{\mathcal H}\colon \mathfrak X(\mathfrak{G}_{\gamma(0)}) \to \mathfrak X(\mathfrak{G}_x)$ is the Lie algebra morphism induced by ${\rm hol}_{\gamma}^{\bar{\mathcal H}}$.
\end{corollary}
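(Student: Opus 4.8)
The plan is to deduce this statement directly from the Ambrose--Singer theorem for associated connections (Corollary \ref{cr:AS_associated}), by exhibiting the group connection $\mathcal H$ as an associated connection in a suitable principal bundle.

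First, since $\mathfrak{G}$ carries a group connection, Theorem \ref{th:LT} guarantees that it is locally trivial, so the hypotheses of Theorem \ref{th:group_associated} are met. Fixing $x\in M$ and setting $G = {\rm Aut}_{gr}(\mathfrak{G}_x)$ and $P = {\rm Iso}_{gr}(\mathfrak{G})_{x\bullet}$, that theorem realizes $\mathfrak{G}$ as the associated bundle $P\times_G \mathfrak{G}_x$, where $G$ acts faithfully on the fiber $\mathfrak{G}_x$ by Lie group automorphisms. I would then invoke Corollary \ref{cor:G_vs_AutG} to obtain the unique $G$-invariant principal connection $\mathcal D$ in $P$ whose associated connection is exactly $\mathcal H$; that is, $\mathcal H = \bar{\mathcal D}$ in the notation of the associated-connection construction.

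With this identification in place, the remaining step is bookkeeping. Because the action of $G$ on $\mathfrak{G}_x$ is faithful, the finite-dimensional Lie algebra ${\rm aut}(\mathfrak{G}_x)$ of Example \ref{ex:Aut_aut} sits inside $\mathfrak X(\mathfrak{G}_x)$, and Proposition \ref{pr:curvatures_as}(a) ensures that the curvature of $\mathcal H = \bar{\mathcal D}$ takes values there. Since $\mathcal H$ and $\bar{\mathcal D}$ are literally the same connection, their curvature $2$-forms (Definition \ref{df:curvature}) coincide, and so do the induced holonomy morphisms ${\rm hol}_{\gamma*}^{\mathcal H}$ on the fiberwise vector fields. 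Applying Corollary \ref{cr:AS_associated} to $E = \mathfrak{G}$ then yields precisely that ${\rm Lie}({\rm RHol}_x^{\mathcal H})$ is the Lie subalgebra of $\mathfrak X(\mathfrak{G}_x)$ generated by the elements ${\rm hol}_{\gamma*}^{\mathcal H}(R(X,Y))$, as claimed.

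There is essentially no genuine obstacle here: the analytic heart of the argument was already carried out in Corollary \ref{cr:AS_associated} (which itself rests on Propositions \ref{pr:curvatures_as} and \ref{pr:hol_as} together with the classical Ambrose--Singer theorem, Theorem \ref{th:AS}) and in Corollary \ref{cor:G_vs_AutG}. The only care needed is to check that the notation in the statement --- where $R$, ${\rm hol}_{\gamma*}^{\mathcal H}$ and the ambient $\mathfrak X(\mathfrak{G}_x)$ all refer to the group bundle directly --- matches the associated-bundle quantities through the identification $\mathfrak{G} \cong P\times_G \mathfrak{G}_x$ and $\mathcal H = \bar{\mathcal D}$. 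Once that dictionary is fixed, the corollary is immediate.
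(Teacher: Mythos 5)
Your proposal is correct and follows essentially the same route as the paper: both realize the group connection as an associated connection via Theorem \ref{th:group_associated} and Corollary \ref{cor:G_vs_AutG} (with $G={\rm Aut}_{gr}(\mathfrak{G}_x)$, $P={\rm Iso}_{gr}(\mathfrak{G})_{x\bullet}$) and then apply the Ambrose--Singer theorem for associated connections, Corollary \ref{cr:AS_associated}. Your explicit appeal to Theorem \ref{th:LT} for local triviality is a small clarification the paper leaves implicit, but it does not change the argument.
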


\section{Flat group connections}\label{s_ME}

Let us fix a pointed connected manifold $(M,x_0)$ and a Lie group $G$. Let us consider the class ${\mathcal M}(M,G)$ the Moduli space of isomorphism classes of group connections in group bundles over $M$ with fiber $G$. From corollary \ref{cor:G_vs_AutG} this Moduli space is isomorphic to that of flat ${\rm Aut}(G)$-invariant connections on principal bundles over $M$. This moduli space is well known and isomorphic to the space ${\rm Rep}(\pi_1(M,x_0), {\rm Aut}(G))$.

Let us recall the construction of the space of representations. We consider the left action of ${\rm Aut}(G)$ on ${\rm Hom}(\pi_1(M,x_0), {\rm Aut}(G))$ given by composition with internal automorphisms of ${\rm Aut}(G))$. The space of representations is the quotient 
$${\rm Rep}(\pi_1(M,x_0), {\rm Aut}(G)) = {\rm Hom}(\pi_1(M,x_0), {\rm Aut}(G))/{\rm Aut}(G).$$ 
In what follows, we show how representations of the fundamental group are associated to group connections and vice versa. 

\subsection{From a group connection to a group representation}

Let $\pi:\mathfrak{G}\to M$ be a group bundle with fiber $G$ endowed with a flat group connection $\mathcal{H}$. Let us consider a fixed point $x_0\in M$. We also fix an isomorphism $\mathfrak{G}_{x_0}\simeq G$.  For any given loop $\gamma$ with base point $x_0$ the holonomy ${\rm hol}_\gamma^{\mathcal H}$ depends only of the homotopy class of $\gamma$. Therefore, the holonomy representation factorizes through the homotopy groupoid and we obtain the monodromy representation.

$$
\xymatrix{ \ell(M,x_0,x_0) \ar[d]\ar[rd]^-{{\rm hol}^{\mathcal H}}  \\  \pi_1(M,x_0)  \ar[r]_-{{\rm mon}^{\mathcal H} } &  {\rm Aut}(G) }$$

It is clear that if $(\mathfrak{G},\mathcal H)$ and $(\mathfrak{G}',\mathcal H')$ are isomorphic group bundles with fiber $G$ and flat group connection then $[{\rm mon}^{\mathcal H}]$ does not depends on the choice of the isomorphism $\mathfrak{G}_{x_0}\simeq G$ and $[{\rm mon}^{\mathcal H}] = [{\rm mon}^{\mathcal H'}]$. 
 We have map,
$$\overline{\rm mon} \colon  {\mathcal M}(M,G) \to {\rm Rep}(\pi_1(M,x_0), {\rm Aut}(G)), \quad [\mathcal H]\to 
[{\rm mon}^{\mathcal H}].$$

\subsection{From a representation of to a group connection}

Let $\tilde M$ be the space of all homotopy equivalence classes $[\gamma]$ of paths $\gamma$ in $M$ starting at $x_0$. Then $[\tau]\in \tilde M \to \tau(1)\in M$ is a model for the universal cover of $M$ and we have a $\pi_1(M,x_0)$-principal bundle,
$$\tilde M \times \pi_1(M,x_0)\ni ([\tau],[\gamma]) \mapsto [\tau \gamma]\in \tilde M.$$ If $\rho: \pi_1(M,x_0) \to \mathrm{Aut}(G)$ is a smooth representation of $\pi_1(M,x_0)$ by automorphisms of $G$, we denote by $\Pi:\tilde{M}\times_{\rho} G \to M$ the associated bundle to the representation $\rho$.

\begin{remark}\label{obs:paraoperar}
There are two special observations for this bundle: 
\begin{enumerate}

    \item 
    For each $[\gamma]\in \pi_1(M,x_0)$ and $g\in G$, 
    \[
    [[\gamma],g]=[[x_0], \rho([\gamma])g],\] where $[x_0]$ denotes the class of the constant path;
 
    \item
    Let be $[\tau], [\tau']\in \tilde{M}$ such that $\tau(1)=\tau'(1) = x$. Let $U_x\subset M$ be a simply connected distinguished neighborhood for $x$,  and let $U_{\tau}$ and $U_{\tau'}$ be open neighborhoods for $[\tau]$ and $[\tau']$ respectively and such that both are diffeomorphic to $U_x$.  
        Therefore for each $y\in U_x$ and each $g \in G$ we have that   
    \[
    [[\sigma_y\star\tau'],g]=[[\sigma_y \star \tau \star \gamma^{-1}],g]=[[\sigma_y \star \tau],\rho([\gamma])g].\]
Where $\gamma$ is the loop at $x_0$ given by $\gamma = (\tau')^{-1}\tau$ and $\sigma_y$ is any path in $U_x$ from $x$ to $y$. It follows that the horizontal leaf  $U_{\tau'}\times \{g\} \subset \tilde{M}\times G$ may be identified with the horizontal leaf $U_{\tau}\times \{\rho([\gamma])g\}$ on the associated bundle $\tilde{M}\times_{\rho} G$.
\end{enumerate} 
\end{remark}

The balanced construction $\Pi:\big[[\tau],g\big]\in \tilde{M}\times_{\rho} G \mapsto \tau(1)\in M $ is an group bundle with fiber $G$. Let us consider the projection $\mathfrak q\colon \tilde M\times G \to \tilde M\times_\rho G$. Let us see that the trivial group connection $\mathcal{H}_0=T\tilde{M}\times \{0\}$ on the trivial bundle $\tilde{M}\times  G \to \tilde M$ projects onto a group connection ${\mathfrak q}_*(\mathcal H_0) = \mathcal H_{\rho}$ on the group bundle $M\times_{\rho} G \to M$.
 Given $[\tau], [\tau']\in \tilde{M}$ such that $\tau(1)=\tau'(1) = x_1$, it follows from \ref{obs:paraoperar}, that 
\[
\mathfrak{m}\big(([\tau],g),([\tau'],g')\big)= \mathfrak{m}\big(([\tau],g),([\tau],\rho([\gamma])g')\big)= ([\tau],g\rho([\gamma])g'))
\] establishes a smooth product which turns each fiber of $\tilde{M}\times_{\rho} G$ into a Lie group. 

\medskip

For each $[\gamma]\in \pi_1(M,x_0)$ we have a commutative diagram:
\begin{equation}\label{eq:diagqs}
\vcenter{\xymatrix@C-20pt@R-10pt{\tilde{M}\times G\ar[dr]^{\mathfrak q}\ar[dd]_{R_{[\gamma]^{-1}} \times
L_{[\gamma]}}\\
&\quad \tilde{M}\times_{\rho}G\\
\tilde{M}\times G\ar[ur]_{\mathfrak q}.}}
\end{equation}
It is clear that the differential of
$R_{[\gamma]^{-1}} \times
L_{[\gamma]}$ maps the space   $\mathcal{H}_0([\tau],g)=T_{[\tau]}\tilde{M} \oplus\{0_g\}$ over the space $\mathcal{H}_0([\tau\gamma^{-1}],\rho([\gamma](g))=T_{[R_{\gamma^{-1}}(\tau)]}\tilde{M} \oplus\{0_{L_{[\gamma]}(g)}\}$, for each $[\tau] \in \tilde{M}$ and $g\in G$. Therefore, differentiating the diagram \eqref{eq:diagqs} we obtain:
\[\mathrm{d}_{([\tau],g)}\mathfrak q\big(\mathcal{H}_0([\tau],g\big)=
\mathrm{d}_{(R_{[\gamma]^{-1}}([\tau])],L_{[\gamma]}(g))}\mathfrak q\big(\mathcal{H}_0([\tau\gamma^{-1}],\rho([\gamma](g))\big),\]
it follows that ${\mathcal{H}}_\rho= \mathfrak{q}_*(\mathcal{H}_0)$ defines a group connection on $\tilde{M}\times_{\rho} G$.

\begin{lemma}\label{lema:isomorfismovsmorfismosequivariante}
Let $G$ and $G'$ be two Lie groups and $\rho:\pi_1(M,x)\to \mathrm{Aut}(G)$, $\rho':\pi_1(M,x)\to \mathrm{Aut}(G')$ two group morphisms. There exist a group bundle isomorphism $f: \tilde{M}\times_{\rho}G\to \tilde{M}\times_{\rho'}G'$
such that $f_*(\mathcal H_{\rho}) = \mathcal H_{\rho'}$ if and only if there exists a Lie group isomorphism  $\phi:G\to G'$ which is $\pi_1(M,x)$-equivariant.
\end{lemma}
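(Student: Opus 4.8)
The plan is to prove both implications by transporting everything to the fiber over the base point $x_0$ and to the monodromy representation computed there. First I would record the basic bookkeeping: the constant path $[x_0]\in\tilde M$ is a frame for the fiber over $x_0$, so $g\mapsto[[x_0],g]$ identifies $G$ with $(\tilde M\times_\rho G)_{x_0}$ as Lie groups, and likewise $G'$ with $(\tilde M\times_{\rho'}G')_{x_0}$. The key computation is that, under these identifications, the monodromy of $\mathcal H_\rho$ is exactly $\rho$. Indeed, the $\mathcal H_0$-horizontal lift to $\tilde M\times G$ of a loop $\gamma$ based at $x_0$, starting at $([x_0],g)$, keeps the $G$-coordinate constant and ends at $([\gamma],g)$; projecting by $\mathfrak q$ and using Remark \ref{obs:paraoperar}(1), one has $[[\gamma],g]=[[x_0],\rho([\gamma])g]$, so ${\rm hol}_\gamma^{\mathcal H_\rho}$ corresponds to $\rho([\gamma])$, that is ${\rm mon}^{\mathcal H_\rho}=\rho$, and symmetrically ${\rm mon}^{\mathcal H_{\rho'}}=\rho'$.

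For the implication that an equivariant isomorphism produces a connection-preserving bundle isomorphism, suppose $\phi\colon G\to G'$ is a $\pi_1(M,x)$-equivariant Lie group isomorphism. I would lift it to $\tilde f={\rm id}_{\tilde M}\times\phi\colon\tilde M\times G\to\tilde M\times G'$ and show it descends through the identification maps $\mathfrak q,\mathfrak q'$ to a map $f$. Well-definedness is precisely the content of equivariance: the relation $([\tau\gamma],g)\sim_\rho([\tau],\rho([\gamma])g)$ is carried by $\tilde f$ to $([\tau\gamma],\phi(g))$ and to $([\tau],\phi(\rho([\gamma])g))=([\tau],\rho'([\gamma])\phi(g))$, which are $\sim_{\rho'}$-equivalent. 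That $f$ is a group bundle isomorphism is immediate, since on each fiber it is $\phi$ read through the frames, hence a Lie group isomorphism, with inverse induced by the (also equivariant) isomorphism $\phi^{-1}$. Finally $\tilde f$ visibly preserves the trivial connections, $\tilde f_*\mathcal H_0=\mathcal H_0'$, because its differential sends $T_{[\tau]}\tilde M\oplus\{0_g\}$ to $T_{[\tau]}\tilde M\oplus\{0_{\phi(g)}\}$; combined with $f\circ\mathfrak q=\mathfrak q'\circ\tilde f$ this yields $f_*\mathcal H_\rho=\mathcal H_{\rho'}$.

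For the converse, given a group bundle isomorphism $f$ over $M$ with $f_*\mathcal H_\rho=\mathcal H_{\rho'}$, I would define $\phi$ as the restriction of $f$ to the fiber over $x_0$, read through the two frames; since $f$ is fiberwise a Lie group isomorphism and the frames are Lie group isomorphisms, $\phi\colon G\to G'$ is a Lie group isomorphism. To see that $\phi$ is equivariant, I use that $f$ covers the identity on $M$ and preserves the connections, so it carries $\mathcal H_\rho$-horizontal lifts of any loop $\gamma$ to $\mathcal H_{\rho'}$-horizontal lifts; hence on the fiber over $x_0$ one has $f\circ{\rm hol}_\gamma^{\mathcal H_\rho}={\rm hol}_\gamma^{\mathcal H_{\rho'}}\circ f$. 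Translating through the frames by means of the monodromy computation above gives $\phi\circ\rho([\gamma])=\rho'([\gamma])\circ\phi$ for every $[\gamma]\in\pi_1(M,x)$, which is exactly $\pi_1(M,x)$-equivariance.

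The step I expect to be the main obstacle is the monodromy computation together with the careful, consistent use of frames: one must verify that the holonomy of $\mathcal H_\rho$ around $[\gamma]$ really equals $\rho([\gamma])$, and not its inverse, under the chosen identification, since a sign error there would replace the equivariance condition by its contragredient. Once this normalization is pinned down, both implications follow formally from the functoriality of horizontal lifting under bundle maps that cover the identity.
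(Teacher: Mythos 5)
Your proof is correct and takes essentially the same route as the paper: the forward direction is the paper's explicit formula $[[\tau],g]\mapsto[[\tau],\phi(g)]$ with the descent/well-definedness through $\mathfrak q$ spelled out, and the converse is the paper's restriction $f|_{x_0}$ read through the frames. The only addition is that you justify the equivariance of $\phi$ (which the paper dismisses with ``clearly'') via holonomy intertwining together with the computation ${\rm mon}^{\mathcal H_\rho}=\rho$ --- exactly the content of the lemma the paper proves immediately afterwards --- and your care about the normalization $\rho$ versus its contragredient is well placed and correctly resolved.
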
  

\begin{proof} Clearly each connection preserving isomorphism $f$ induces a group homomorphism $f|_x = \phi\colon G\to G'$ which is $\pi(M,x)$-equivariant. On the other hand, let us assume that $\phi:G \to G'$ is a $\pi_1(M,x)$-equivariant Lie group isomorphism. Then the formula:
$$f\colon [[\tau],g]\in M\times_{\rho}G \longmapsto [[\tau],\phi(g')]\in \tilde M\times_{\rho'}G'$$
defines a connection preserving isomorphism of group bundles. 
\end{proof}

Lemma \ref{lema:isomorfismovsmorfismosequivariante} tell us that if $\rho$ and $\rho'$ are in the same class of ${\rm Rep}(\pi_1(M,x_0),{\rm Aut}(G))$ then their balanced constructions are endowed with isomorphic group connections. Therefore,
we have a well defined map:
$$\Psi\colon {\rm Rep}(\pi_1(M,x_0), {\rm Aut}(G)) \to \mathcal M(M,G), \quad [\rho] \to 
[(\tilde M \times_{\rho} G,\mathcal H_{\rho})].$$

\subsection{Equivalence between group representations and group connections} The following lemma ensures that $\overline{\rm mon}\circ \Psi$ is the identity in ${\rm Rep}(\pi_1(M,x_0),{\rm Aut}(G))$.

\begin{lemma} Let us consider $\rho\colon \pi_1(M,x)\to {\rm Aut}(G)$. The monodromy of $\mathcal H_\rho$ is $\rho$.
\end{lemma}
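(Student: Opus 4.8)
The plan is to compute the monodromy automorphism attached to an arbitrary loop $\gamma\in\ell(M,x_0,x_0)$ directly, by producing its $\mathcal H_\rho$-horizontal lift explicitly. The key principle is that horizontal lifts for $\mathcal H_\rho$ are nothing but the images under $\mathfrak q$ of horizontal lifts for the \emph{trivial} connection $\mathcal H_0=T\tilde M\times\{0\}$ on $\tilde M\times G\to\tilde M$, precisely because $\mathcal H_\rho=\mathfrak q_*(\mathcal H_0)$. Since $\mathcal H_0$ has vanishing $G$-component, an $\mathcal H_0$-horizontal curve is one along which the $G$-coordinate is constant; these are trivial to write down.

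First I would fix, as in the statement, the identification of the fibre over $x_0$ with $G$ by sending $g\mapsto[[x_0],g]$, where $[x_0]$ is the class of the constant path. Given a loop $\gamma$ at $x_0$, write $\gamma_t$ for the path $\gamma$ truncated at time $t$, so that $t\mapsto[\gamma_t]$ is the canonical lift of $\gamma$ to the universal cover $\tilde M$ starting at $[x_0]$ and ending at $[\gamma]$. Then the curve $c(t)=([\gamma_t],g)$ in $\tilde M\times G$, with $g$ held constant, is $\mathcal H_0$-horizontal (its velocity lies in $T\tilde M\times\{0\}=\mathcal H_0$) and projects to $\gamma$ in $M$ since $\Pi\circ\mathfrak q$ agrees with the covering map on the $\tilde M$-factor. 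Consequently $\mathfrak q\circ c$ is the $\mathcal H_\rho$-horizontal lift of $\gamma$ with initial point $\mathfrak q(c(0))=[[x_0],g]$.

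It then remains to read off the endpoint. We have $\mathfrak q(c(1))=[[\gamma],g]$, and here I would invoke item (1) of Remark \ref{obs:paraoperar}, which gives $[[\gamma],g]=[[x_0],\rho([\gamma])g]$. Hence
\[
{\rm hol}^{\mathcal H_\rho}_\gamma\big([[x_0],g]\big)=[[x_0],\rho([\gamma])(g)],
\]
so that, under the identification $g\leftrightarrow[[x_0],g]$, the holonomy of $\gamma$ is exactly the automorphism $\rho([\gamma])\in{\rm Aut}(G)$. Because $\mathcal H_\rho$ is flat, the holonomy depends only on the homotopy class, so it descends to the monodromy and we conclude ${\rm mon}^{\mathcal H_\rho}([\gamma])=\rho([\gamma])$ for every class, i.e. ${\rm mon}^{\mathcal H_\rho}=\rho$.

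The only genuinely load-bearing step, and the one I would state with care, is the claim that $\mathfrak q$ carries $\mathcal H_0$-horizontal curves to $\mathcal H_\rho$-horizontal curves over the same base path. This is not a computation but a structural fact: it follows from $\mathcal H_\rho=\mathfrak q_*(\mathcal H_0)$ together with the compatibility of $\mathfrak q$ with the two projections to $M$ (so that $c$ and $\mathfrak q\circ c$ cover the same curve $\gamma$), so that $(\mathfrak q\circ c)'(t)=\mathrm d\mathfrak q(c'(t))\in\mathfrak q_*(\mathcal H_0)=\mathcal H_\rho$. Everything else — the shape of the lift and the final rewriting — is immediate from the explicit description of $\mathcal H_0$ and from Remark \ref{obs:paraoperar}.
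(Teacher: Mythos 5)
Your proof is correct and follows essentially the same route as the paper: the paper also lifts $\gamma$ via the truncated paths $\gamma_t$ to the curve $t\mapsto[[\gamma_t],g]$ (your $\mathfrak q\circ c$) and reads off the endpoint with the identity $[[\gamma],g]=[[x_0],\rho([\gamma])g]$ from Remark \ref{obs:paraoperar}. You merely make explicit the horizontality justification via $\mathcal H_\rho=\mathfrak q_*(\mathcal H_0)$, which the paper leaves implicit.
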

\begin{proof}
Let $\gamma$ be a loop at $x_0$. 
For each $\varepsilon\in [0,1]$ let us consider the path $\gamma_{\varepsilon}\colon [0,1]\to M$ defined by $\gamma_{\varepsilon}(t) = \gamma(\varepsilon t)$. Thus the horizontal lift of $\gamma$ starting on $[[x_0],g]$ is
$$\tilde{\gamma}:[0,1]\to \tilde{M}\times_{\rho} G, t \mapsto [[\gamma_t],g]. 
$$
It follows that
$$[[x_0], {\rm mon}_{x_0}^{{\mathcal H}_\rho}(g)] = \tilde\gamma(1) = [[\gamma],g] =
[[x_0],\rho([\gamma])g],$$
and therefore ${\rm mon}_{x_0}^{{\mathcal H}_\rho}(g) = \rho([\gamma])g$.
\end{proof}

The following lemma ensures that $\psi\circ \overline{\rm mon}$ is the identity in $\mathcal M(M,G_0)$.

\begin{lemma}\label{lema:sobretectividadmapa}
Let $\mathfrak{G}\to M$ be a group bundle endowed with a flat group connection $\mathcal{H}$. Given $x_0\in M$, let $\rho:=\mathrm{mon}^{\mathcal{H}}_{x_0}: \pi_1(M,x_0) \to \mathrm{Aut}(G)$ be its monondromy representation with respect to $\mathcal{H}$. Then
\begin{equation}\label{eq1:isomorfismodefibrados}
\widetilde{\mathrm{mon}}^{\mathcal{H}}_{x_0}:  \big[[\tau],g\big]\in \tilde{M}\times_{\rho} G\longmapsto  \mathrm{mon}^{\mathcal{H}}_{\tau}(g) \in G
\end{equation}is a isomorphism conjugating $\mathcal H_\rho$ and $\mathcal H$. 
\end{lemma}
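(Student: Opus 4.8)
The plan is to verify, in turn, that $\widetilde{\mathrm{mon}}^{\mathcal H}_{x_0}$ is well defined, that it is a group bundle isomorphism, and that it carries $\mathcal H_\rho$ to $\mathcal H$. The unifying idea is that holonomy along a path \emph{is} the horizontal lift, so all three properties reduce to the composition law of holonomy together with the flatness of $\mathcal H$. (Here $\mathrm{hol}^{\mathcal H}_\tau(g)$ should of course be read as an element of $\mathfrak G_{\tau(1)}$, under the fixed identification $\mathfrak G_{x_0}\simeq G$; the target is the bundle $\mathfrak{G}$, not $G$.)

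First I would check well-definedness. The relation defining the balanced construction identifies $([\tau\star\gamma],g)$ with $([\tau],\rho([\gamma])g)$ for $[\gamma]\in\pi_1(M,x_0)$, and since $\rho([\gamma])=\mathrm{hol}^{\mathcal H}_\gamma$ under the identification $\mathfrak G_{x_0}\simeq G$, the composition law $\mathrm{hol}^{\mathcal H}_{\tau\star\gamma}=\mathrm{hol}^{\mathcal H}_\tau\circ\mathrm{hol}^{\mathcal H}_\gamma$ yields $\mathrm{hol}^{\mathcal H}_{\tau\star\gamma}(g)=\mathrm{hol}^{\mathcal H}_\tau(\rho([\gamma])g)$, so the assignment is constant on equivalence classes. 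Flatness is exactly what makes $\mathrm{hol}^{\mathcal H}_\tau$ depend only on the homotopy class $[\tau]$, so the formula descends to $\tilde M\times_\rho G$; and the map is fibre preserving over $M$ since $\mathrm{hol}^{\mathcal H}_\tau$ lands in $\mathfrak G_{\tau(1)}$.

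Next, the isomorphism. On the fibre over $x=\tau(1)$ the map is precisely $\mathrm{hol}^{\mathcal H}_\tau\colon\mathfrak G_{x_0}\to\mathfrak G_x$, which is a Lie group isomorphism by the earlier proposition on holonomy of group connections; writing the product on $\tilde M\times_\rho G$ in a common representative $\tau$ (so the relevant loop is trivial and $\rho=\mathrm{id}$), fibrewise multiplicativity is immediate. Thus the map is fibrewise a Lie group isomorphism, in particular a bijection covering $\mathrm{id}_M$. To promote this to a group bundle isomorphism I would argue locally using flatness: over a simply connected $U\ni x$, flatness together with Theorem \ref{th:LT} furnishes a group bundle trivialization $\mathfrak G|_U\simeq\mathfrak G_x\times U$ carrying $\mathcal H|_U$ to the trivial connection, while a chosen representative $[\tau_0]$ gives a matching trivialization of $\tilde M\times_\rho G$ over $U$ carrying $\mathcal H_\rho$ to the trivial connection. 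In these coordinates $\widetilde{\mathrm{mon}}^{\mathcal H}_{x_0}$ acquires the normal form $(u,g)\mapsto(u,\mathrm{hol}^{\mathcal H}_{\tau_0}(g))$ with a \emph{fixed} Lie group isomorphism in the fibre, hence is manifestly smooth with smooth inverse, so the map is a diffeomorphism and therefore a group bundle isomorphism.

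Finally, conjugation of the connections. Working in the same flat local trivializations, the normal form $(u,g)\mapsto(u,\mathrm{hol}^{\mathcal H}_{\tau_0}(g))$ sends the horizontal leaves $\{g\}\times U$ of $\mathcal H_\rho$ to the horizontal leaves of $\mathcal H$, so $(\widetilde{\mathrm{mon}}^{\mathcal H}_{x_0})_*\mathcal H_\rho=\mathcal H$. More conceptually, an $\mathcal H_\rho$-horizontal curve is $\mathfrak q([\tau_t],g)$ with $g$ fixed and $[\tau_t]=[\tau_0\star c|_{[0,t]}]$ the canonical lift of a curve $c$ in $M$; applying the map and the composition law gives $\mathrm{hol}^{\mathcal H}_{c|_{[0,t]}}(\mathrm{hol}^{\mathcal H}_{\tau_0}(g))$, which by definition is the $\mathcal H$-horizontal lift of $c$, so horizontal curves go to horizontal curves and the push-forward connection is $\mathcal H$. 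The step needing most care is establishing global smoothness and smoothness of the inverse for a map defined fibrewise by holonomy; the clean route is precisely to pass to the flat local trivializations above, where smoothness, bijectivity and connection-preservation all become transparent at once, and the only delicate bookkeeping is matching the concatenation and $\rho$-action conventions so that $\mathrm{hol}^{\mathcal H}_{\tau\star\gamma}(g)=\mathrm{hol}^{\mathcal H}_\tau(\rho([\gamma])g)$ comes out with the correct orientation.
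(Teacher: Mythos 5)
Your proposal is correct and follows essentially the same route as the paper: both reduce everything to flat local trivializations over simply connected neighborhoods (one induced by $\mathcal H$ on $\mathfrak G$, one given by a chosen representative path on $\tilde M\times_\rho G$), in which the map takes the normal form $(u,g)\mapsto(u,\mathrm{hol}^{\mathcal H}_{\tau_0}(g))$, making smoothness, invertibility and connection-preservation simultaneous. Your additional explicit checks of well-definedness on the balanced construction, of the fibrewise Lie group isomorphism property, and your correct observation that the codomain in \eqref{eq1:isomorfismodefibrados} should be read as $\mathfrak G$ rather than $G$, are details the paper leaves implicit.
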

\begin{proof}
Given $x\in M$, let $\tau:[0,1]\to M$ be a path on $M$ such that $\gamma(0)=x_0$ and $\gamma(1)=x$. Let $\phi_{x}: U_{x}\times \mathfrak{G}_{x}\to \mathfrak{G}|_{U_{x}}$ be a local trivialization induced by $\mathcal{H}$ so that $U_{x}$ is a simply connected neighborhood for the point $x$ and the map:
\[
\psi_x:[\sigma_y\tau,g]\in \tilde{M}\times_{\rho} G|_{U_x} \longmapsto (y,g) \in U_{x}\times G 
\]defines a local trivialization for $\tilde{M}\times_{\rho}G$. Note that both $\phi_x$ and $\psi_x$ map the trivial connections on $U_x\times \mathfrak{G}_x$ and $U_x\times G$ over $\mathcal{H}$ y $\bar{\mathcal{H}}_0$ respectively. The desired result follows from the commutativity of the diagram:
\[
\xymatrix{
\tilde{M}\times_{\rho} G|_{U_x}  \ar[r]^{\sim} \ar[d]_{\psi_{x}} &  \mathfrak{G}|_{U_x}\\ 
U_x\times G\ar[r]_{\cong} &   U_x\times \mathfrak{G}_x \ar[u]_{\phi_x} }
\]
\end{proof}

\medskip

\subsection*{Acknowledgements}
The authors acknowledge the support of their host institutions Universidad Nacional de Colombia, sede Medell\'in, and Universidad de Antioquia. DBS is also grateful for the support of the research group ``Grupo interinstitucional de Investigación en Geometría y Topología''. We thank M. Malakhaltsev, who helped us to clarify some terminology inconsistencies that appeared in the original version of this paper.

\bibliographystyle{plain}
\bibliography{references}

\begin{thebibliography}{10}

\bibitem{ambrose1953theorem}
Warren Ambrose and Isadore~M. Singer.
\newblock A theorem on holonomy.
\newblock {\em Transactions of the American Mathematical Society},
  75(3):428--443, 1953.

\bibitem{buium2006differential}
Alexandru Buium.
\newblock {\em Differential algebraic groups of finite dimension}.
\newblock Springer, 2006.

\bibitem{cassidy1972differential}
Phyllis~Joan Cassidy.
\newblock Differential algebraic groups.
\newblock {\em American Journal of Mathematics}, 94(3):891--954, 1972.

\bibitem{ehresmann1952}
Charles Ehresmann.
\newblock Les connexions infinit\'esimales dans un espace fibr\'e
  diff\'erentiable.
\newblock {\em S{\'e}minaire Bourbaki}, 1(24):153--168, 1952.

\bibitem{hochschild1952automorphism}
Gerhard Hochschild.
\newblock The automorphism group of a lie group.
\newblock {\em Transactions of the American Mathematical Society},
  72(2):209--216, 1952.

\bibitem{iwasaki2006}
Michi-aki Inaba, Katsunori Iwasaki, and Masa-hiko Saito.
\newblock Dynamics of the sixth painlevé equation.
\newblock In {\em Th\'eories asymptotiques et \'equations de Painlev\'e},
  volume~14, pages 103--167. S\'eminaires et Congr\`es, Angers, 2006.

\bibitem{kobayashi}
Shoshichi Kobayashi and Katsumi Nomizu.
\newblock {\em Foundations of differential geometry}, volume~1.
\newblock Interscience publishers New York, 1963.

\bibitem{Kolar}
Ivan Kol{\'a}r, Jan Slov{\'a}k, and Peter~W. Michor.
\newblock {\em Natural operations in differential geometry}.
\newblock Springer Verlag, 1999.

\bibitem{kolchin1985differential}
Ellis~Robert Kolchin.
\newblock {\em Differential algebraic groups}.
\newblock Academic Press, 1985.

\bibitem{mackenzie1987lie}
Kirill~C. Mackenzie.
\newblock {\em Lie groupoids and Lie algebroids in differential geometry},
  volume 124.
\newblock Cambridge university press, 1987.

\bibitem{malgrange2010differential}
Bernard Malgrange.
\newblock Differential algebraic groups.
\newblock In {\em Algebraic approach to differential equations}, pages
  292--312. World Scientific, 2010.

\bibitem{moerdijk2003introduction}
Ieke Moerdijk and Janez Mrcun.
\newblock {\em Introduction to foliations and Lie groupoids}, volume~91.
\newblock Cambridge university press, 2003.

\bibitem{Warner}
Frank~W. Warner.
\newblock {\em Foundations of differentiable manifolds and Lie groups},
  volume~94.
\newblock Springer Science \& Business Media, 2013.

\end{thebibliography}
\end{document}